\numberwithin{equation}{section}
\theoremstyle{plain}% 
\newtheorem{theorem}{Theorem}
\numberwithin{theorem}{section}
\newtheorem{proposition}[theorem]{Proposition}
\newtheorem{example}[theorem]{Example}
\newtheorem{lemma}[theorem]{Lemma}
\newtheorem{corollary}[theorem]{Corollary}
\newtheorem{remark}[theorem]{Remark}
\newtheorem{conjecture}[theorem]{Conjecture}
\newcommand{\C}{\mathbb{C}}
\newcommand{\Z}{\mathbb{Z}}
\newcommand{\PP}{\mathbb{P}}
\newcommand{\R}{\mathbb{R}}
\newcommand{\arxiv}[1]{\href{http://arxiv.org/abs/#1}{{\tt arXiv:#1}}}
\date{November 5, 2013}
\begin{document}

\title{\bf Tropicalization of classical moduli spaces}

\author{Qingchun Ren \and Steven V Sam \and Bernd Sturmfels}

\maketitle

  \begin{abstract}
  \noindent 
The image of the complement of a hyperplane arrangement
under a monomial map can be tropicalized combinatorially using 
matroid theory.  
We apply this to classical moduli spaces that are associated
with  complex reflection arrangements.
Starting from modular curves, we visit the
Segre cubic, the Igusa quartic, and
moduli of marked del Pezzo surfaces of degrees $2$ and $3$.  Our primary example is the
 Burkhardt quartic, whose tropicalization is
 a $3$-dimensional fan 
in  $39$-dimensional space.
This effectuates a synthesis of
concrete and abstract approaches to
tropical moduli of  genus 2 curves.
    \end{abstract}

%\tableofcontents

\section{Introduction}

Algebraic geometry is the study of solutions sets
to polynomial equations. Solutions that depend on
an infinitesimal parameter can be analyzed combinatorially using
min-plus algebra. This insight led to the development of 
tropical algebraic geometry \cite{MS}. 
While all algebraic varieties and their tropicalizations may be explored
at various level of granularity, varieties that serve as moduli spaces
are usually studied at the highest level of abstraction.
This paper does  exactly the opposite: we investigate and
tropicalize certain concrete moduli spaces,
mostly  from the 19th century repertoire \cite{hunt}, by means 
of their defining polynomials.

A first example, familiar to all algebraic geometers, is the
moduli space $\mathcal{M}_{0,n}$ of $n$ distinct points on the projective line $\mathbb{P}^1$.
We here regard $\mathcal{M}_{0,n}$ as a subvariety in a suitable torus.
Its tropicalization ${\rm trop}(\mathcal{M}_{0,n})$
is a simplicial fan of dimension $n-3$ whose points parametrize all
metric trees with $n$ labeled leaves. The cones distinguish different combinatorial types of metric trees.
The defining polynomials of this (tropical) variety are 
the $\binom{n}{4}$ Pl\"ucker quadrics $p_{ij} p_{k\ell} - p_{ik} p_{j\ell} + p_{i\ell} p_{jk}$.
These quadrics are the $4 \times 4$-subpfaffians of
a skew-symmetric $n \times n$-matrix, and they form
a {\em tropical basis} for $\mathcal{M}_{0,n}$. 
The {\em tropical compactification} defined by this fan
is the moduli space $\overline{\mathcal{M}}_{0,n}$ of $n$-pointed stable rational curves.
The picture for $n=5$ is delightful:
the tropical surface  ${\rm trop}(\mathcal{M}_{0,5})$ is the cone
over the {\em Petersen graph}, with vertices labeled by the
$10$ Pl\"ucker coordinates $p_{ij}$ as in Figure~\ref{fig:petersen}.

\begin{figure}[h]
\begin{center}
\includegraphics[width=0.3\textwidth]{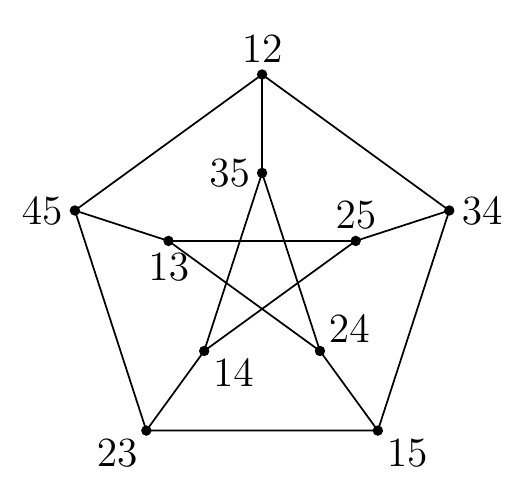}
\end{center}
\vspace{-0.3in}
  \label{fig:petersen}
\caption{The Petersen graph represents the tropicalization of $\mathcal{M}_{0,5}$.}
\end{figure}

A related example is the universal family $\mathcal{A}(5)$  over the modular curve $X(5)$.
The relevant combinatorics goes back to Felix Klein and his
famous 1884 lectures on the icosahedron \cite{klein}.
Following Fisher \cite{Fis}, the surface $\mathcal{A}(5)$ sits in
  $\mathbb{P}^1 \times \PP^4$ and has the Pfaffian representation
\begin{equation}
\label{eq:pfaff5}
 {\rm rank} \begin{bmatrix}
0 & -a_1x_1 & -a_2 x_2 & a_2 x_3 & a_1 x_4 \\
 a_1 x_1 & 0 & -a_1 x_3 & -a_2 x_4  & a_2 x_0 \\
a_2 x_2 & a_1 x_3 & 0 & -a_1 x_0 & -a_2 x_1 \\
-a_2 x_3 & a_2 x_4 & a_1 x_0 & 0 & -a_1 x_2 \\
-a_1 x_4 & -a_2 x_0 & a_2 x_1 & a_1 x_2 & 0 
\end{bmatrix} \, \leq \,\, 2 . 
\end{equation}
The base of this family is $ \mathbb{P}^1$ with coordinates $(a_1:a_2)$.
The tropical surface ${\rm trop} (\mathcal{A}(5))$ is
a fan in $\mathbb{TP}^1 \times \mathbb{TP}^4$, which is
 combinatorially the Petersen graph in Figure~\ref{fig:petersen}.
 The central fiber,
over the vertex of $\mathbb{TP}^1$
given by ${\rm val}(a_1) = {\rm val}(a_2)$,
is the $1$-dimensional fan with rays
$e_0,e_1,e_2,e_3,e_4$. These
correspond to the edges
34-25, 12-35, 45-13, 23-14, 15-24.
For ${\rm val}(a_1) {<} {\rm val}(a_2)$,
the fiber is given by the pentagon 12-34-15-23-45-12 
with these rays attached. For
${\rm val}(a_1) {>} {\rm val}(a_2)$, it is the pentagram
35-14-25-13-24-35  with the five rays.
Each of the edges has multiplicity $5$.
The map from ${\rm trop} (\mathcal{A}(5))$ onto $\mathbb{TP}^1$
 is visualized in Figure~\ref{fig:pentagonpentagram}.

\begin{figure}[h]
\begin{center}
\vspace{-0.1in}
\includegraphics[width=0.5\textwidth]{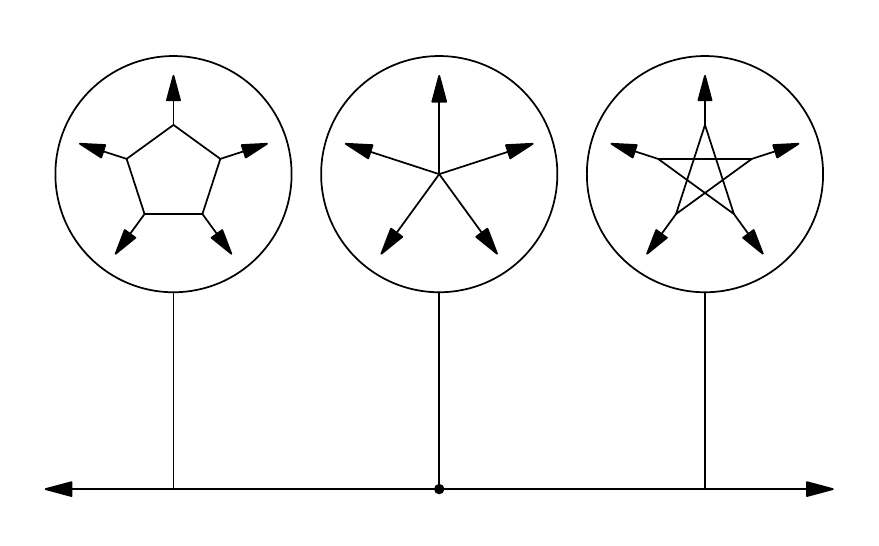}
\end{center}
\vspace{-0.35in}
  \label{fig:pentagonpentagram}
\caption{The universal family of tropical elliptic normal curves of degree $5$.}
\end{figure}

The discriminant of our family $\mathcal{A}(5) \rightarrow \PP^1$ is the  binary form
\begin{equation}
\label{eq:binary12}
 a_1^{11} a_2 \,- \,11 a_1^6 a_2^6 \,-\,a_1 a_2^{11}, 
 \end{equation}
whose $12$ zeros represent Klein's icosahedron.
The modular curve $X(5)$ is $\PP^1$ minus these $12$ points.
For each $(a_1:a_2) \in X(5)$, the condition (\ref{eq:pfaff5})
defines  an elliptic normal curve in $\PP^4$.

Throughout this paper we work 
over an algebraically closed field $K$ of characteristic $0$. 
Our notation and conventions
regarding tropical geometry follow \cite{MS}. For simplicity of exposition,
we identify the tropical projective space $\mathbb{TP}^n$ with its open part
 $\R^{n+1} /\R (1,1,\ldots,1)$.
 
The adjective ``classical'' in our title has  two meanings.
{\em Classical as opposed to tropical} refers to moduli spaces that
are defined over fields, the usual setting of algebraic geometry.
The foundations for tropicalizing such schemes and stacks are currently being
developed, notably in the work of
Abramovich {\it et al.}~\cite{ACP} and
Baker {\it et al.}~\cite{BPR} (see \cite{abramovich} for a survey). These rest on the
connection to nonarchimedean geometry.
{\em Classical as opposed to modern} refers to moduli spaces
that were known in the 19th century.  We focus here on the varieties
featured in Hunt's book \cite{hunt}, notably the
Segre cubic, the Igusa quartic, the Burkhardt quartic,
and their universal~families. We shall also
revisit the work on tropical del Pezzo surfaces by
Hacking {\it et al.} in \cite{HKT} and explain how this relates to the 
tropical G\"opel variety of \cite[\S 9]{RSSS}.

Each of our moduli spaces admits a high-dimensional symmetric embedding of the form
\begin{equation}
\label{eq:twomaps}  \PP^d \,\, \buildrel{{\rm linear}}\over{\hookrightarrow}\,\, \PP^m\,\,
\buildrel{{\rm monomial}} \over{\dashrightarrow} \,\, \PP^n. 
\end{equation}
The coordinates of the first map are the
linear forms defining the $m{+}1$ hyperplanes in a
complex reflection arrangement $\mathcal{H}$ in $\PP^d$,
while the coordinates of the second map are monomials
that encode the symplectic geometry of a finite vector space.
The relevant combinatorics  rests on the representation theory
developed in \cite{GS, GSW}.
Each of our moduli spaces is written as the image of a map
 (\ref{eq:twomaps}) whose coordinates are monomials in linear forms,
 and hence the formula in~\cite[Theorem 3.1]{DFS}  expresses
its tropicalization using the matroid structure of $\mathcal{H}$.

Our warm-up example, the modular curve $X(5)$,
fits the pattern  (\ref{eq:twomaps}) for $d=1, m=11$ and $n=5$.
Its arrangement $\mathcal{H} \subset \PP^1$ is
the set of $12$ zeros of (\ref{eq:binary12}), but now identified with the
complex reflection arrangement ${\rm G}_{16}$ as in
\cite[\S 2.2]{GS}. If we factor 
(\ref{eq:binary12}) into six quadrics,
$$ \bigr(a_1 a_2\bigr) \cdot \prod_{i=1}^5 \bigl((\gamma^{5-i} a_1+(\gamma{+}\gamma^4) a_2) (\gamma^ia_1+(\gamma^2{+}\gamma^3)a_2 \bigr) ,$$
where $\gamma$ is a primitive fifth root of unity, then these
define the coordinates of $ \,\PP^{11}
\buildrel{{\rm monomial}} \over{\dashrightarrow} \PP^5.  $
The image is a quadric in a plane in $\PP^5$,
and $X(5)$ is now its intersection with the torus $\mathbb{G}_m^5$.
The symmetry group ${\rm G}_{16}$ acts on $\PP^5$
by permuting the six homogeneous coordinates.
The tropical modular curve ${\rm trop}(X(5))$ 
is the standard one-dimensional fan  in $\mathbb{TP}^5$,
with multiplicity five and  pentagonal fibers as above. But now
the full symmetry group acts on
the surface $\mathcal{A}(5) \subset \PP^5 \times \PP^4$
and the corresponding tropical surface by permuting coordinates.

\smallskip

We next discuss the organization of this paper.
In Section~\ref{sec:segrecubic} we study the  Segre cubic and the Igusa quartic, in their symmetric embeddings into $\mathbb{P}^{14}$ and
$\mathbb{P}^9$, respectively. We show that the corresponding
tropical variety is the space of phylogenetic trees on six taxa,
and we determine the universal family of tropical Kummer surfaces over that base.
In Section~\ref{sec:burkhardt} we study the Burkhardt quartic in its symmetric embedding
in $\mathbb{P}^{39}$, and, over that base, we compute the universal
family of abelian surfaces in $\PP^8 $  along with
their associated tricanonical curves of genus~$2$.
In Section~\ref{sec:tropicalization} we compute the Bergman fan
of the complex reflection arrangement $\mathrm{G}_{32}$
and from this we derive the tropical Burkhardt quartic in $\mathbb{TP}^{39}$.
The corresponding tropical compactification is shown to coincide with 
the Igusa desingularization of the Baily--Borel--Satake compactification of $\mathcal{A}_2(3)$.
In Section~\ref{sec:genus2moduli} we relate our findings to the
abstract tropical moduli spaces of \cite{BMV, Cha}.
Figure \ref{table-tropical-moduli}  depicts the resulting correspondence between
trees on six taxa,  metric graphs of genus 2, and cones in the 
tropical Burkhardt quartic.
In Section~\ref{sec:delpezzo} we study the reflection arrangements of types
$\mathrm{E}_6$ and $\mathrm{E}_7$, and we show how they lead to the
tropical moduli spaces of marked del Pezzo surfaces
constructed by Hacking, Keel and Tevelev \cite{HKT}.
For $\mathrm{E}_7$ we recover the tropical G\"opel variety of \cite[\S 9]{RSSS}.
This is a six-dimensional fan which serves as the universal family of tropical cubic surfaces.

\subsection*{Acknowledgements}

Steven Sam was supported by a Miller Research Fellowship 
at UC Berkeley.
Qingchun Ren and Bernd Sturmfels were supported by the
National Science Foundation (DMS-0968882) and DARPA (HR0011-12-1-0011).
We thank Florian Block, Dustin Cartwright, Melody Chan, Diane Maclagan, Sam Payne
and Jenia Tevelev for helpful discussions.
We are especially grateful to Gus Schrader for his contributions to 
the material in Section~\ref{sec:burkhardt}.

\section{Segre Cubic, Igusa Quartic, and Kummer Surfaces}
\label{sec:segrecubic}

The moduli spaces in this section are based on
the hyperplane arrangement in $\PP^4$ associated
with the reflection representation of the symmetric group
$\Sigma_6$. It consists of the $15$ hyperplanes
\begin{equation}
\label{eq:xixj} \qquad   x_i - x_j  \,\, = \,\, 0 \quad \qquad (1 \leq i< j \leq 6). 
\end{equation}
Here $\PP^4$ is the projectivization of the
$5$-dimensional  vector space $K^6/K(1,1,1,1,1,1)$.
The $15$ linear forms in (\ref{eq:xixj}) define the map
$\, \PP^4  \buildrel{{\rm linear}}\over{\hookrightarrow} \PP^{14} \,$
whose image is the $4$-dimensional subspace
${\rm Cyc}_4$ of $\PP^{14}$ that is defined by the linear equations
$z_{ij} - z_{ik} + z_{jk} = 0$
for $1 \leq i < j < k \leq 6$.

The corresponding tropical linear space ${\rm trop}({\rm Cyc}_4)$, with the coarsest fan structure, is isomorphic to both the moduli space of equidistant (rooted) phylogenetic trees with $6$ vertices and the moduli space of (unrooted) phylogenetic trees with $7$ vertices.
The former was studied by Ardila and Klivans in \cite[\S 4]{AK}. 
They develop the correspondence between ultrametrics and equidistant
phylogenetic trees in \cite[Theorem 3]{AK}. The latter is a tropicalization of the Grassmannian $ {\rm Gr}(2,7)$ as described in \cite[\S 4]{SS}.
From the combinatorial description given there one derives the face numbers below:

\begin{lemma}
The tropical linear space ${\rm trop}({\rm Cyc}_4)$
is the space of ultrametrics on $6$ elements,
or, equivalently, the space of equidistant phylogenetic trees on $6$ taxa. 
It is a fan over a three-dimensional simplicial complex
with 
$56$ vertices,
$ 490$ edges,
$ 1260$ triangles
and $945$ tetrahedra.
\end{lemma}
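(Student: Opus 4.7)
The plan is to identify $\mathrm{trop}(\mathrm{Cyc}_4)$ with the space of ultrametrics on six elements and then to read off the face numbers from the tree combinatorics. The coordinates $z_{ij}$ of $\PP^{14}$ are indexed by the edges of the complete graph $K_6$, and the defining linear equations $z_{ij}-z_{ik}+z_{jk}=0$ are precisely the cocycle relations around each triangle; hence $\mathrm{Cyc}_4$ is the cut space of $K_6$, whose underlying matroid is the graphic matroid $M(K_6)$. By the general theorem on tropicalization of linear spaces, $\mathrm{trop}(\mathrm{Cyc}_4)$ is the Bergman fan of $M(K_6)$. Tropicalizing a single three-term relation produces the condition that $\max(w_{ij}, w_{ik}, w_{jk})$ is attained at least twice, which is the three-point characterization of an ultrametric; so set-theoretically $\mathrm{trop}(\mathrm{Cyc}_4)$ coincides with the ultrametric cone on $\{1,\dots,6\}$, and the coarsest fan structure is the space of rooted equidistant phylogenetic trees on six taxa by \cite[Theorem 3]{AK}. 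Grafting a seventh leaf at the root converts this into the space of unrooted phylogenetic trees on seven taxa, recovering the $\mathrm{Gr}(2,7)$ description of Speyer–Sturmfels \cite[\S 4]{SS}.

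With this tree model in hand, faces are indexed by compatible (laminar) families of clusters $S \subset \{1,\dots,6\}$ with $2 \le |S| \le 5$. A ray corresponds to a single such cluster, a $k$-dimensional cone to a laminar family of $k$ clusters, and a maximal cone to a rooted binary tree on six labeled leaves. Thus the number of rays is $\sum_{k=2}^{5}\binom{6}{k} = 15 + 20 + 15 + 6 = 56$, and the number of maximal cones is the double factorial $(2\cdot 6-3)!! = 1\cdot 3\cdot 5\cdot 7\cdot 9 = 945$. The edge count splits as nested pairs, giving $\sum_{2\le a<b\le 5}\binom{6}{b}\binom{b}{a} = 360$, plus disjoint pairs, giving $130$, for a total of $490$. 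The triangle count $1260$ follows analogously from the three-way classification of compatible triples (three pairwise disjoint; one containing two disjoint; or a fully nested chain).

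The main obstacle is the bookkeeping for these $f_1$ and $f_2$ enumerations, which is routine but easy to mis-sum. As a sanity check I would use the fact that the order complex of the proper part of the partition lattice $\Pi_6$ is homotopy equivalent to a wedge of $(6-1)! = 120$ spheres of dimension three, so that the reduced Euler characteristic of our complex must equal $-120$. The quoted face vector passes this test: $-1 + 56 - 490 + 1260 - 945 = -120$.
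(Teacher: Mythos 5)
Your argument is correct and follows essentially the same route as the paper, which simply cites Ardila--Klivans for the identification of the Bergman fan of $M(K_6)$ with the space of ultrametrics/equidistant trees and then reads off the face numbers from the tree combinatorics; your explicit enumeration of laminar families and the M\"obius-number check $\tilde\chi = \mu(\Pi_6) = -120$ supply the details the paper leaves to the reader. One small slip: compatible triples come in four types rather than three (you omit the case of a nested pair together with a third cluster disjoint from both), but since the Euler characteristic pins down $f_2 = 1260$ given the other face numbers, this does not affect the conclusion.
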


We now define our two modular threefolds by way of
a monomial map from $\PP^{14}$ to another space
$\PP^n$. The homogeneous coordinates on that $\PP^n$
will be denoted $m_0, m_1, \ldots,m _n$,
so as to highlight that they can be identified with 
certain modular forms, known as
theta constants.

The {\em Segre cubic} $\mathcal{S}$  is the closure of the image of ${\rm Cyc}_4$
under $\,\PP^{14} \buildrel{{\rm monomial}} \over{\dashrightarrow} \PP^{14}\,$ given by
\begin{equation}
\label{eq:segremap} 
\begin{matrix}
(z_{12} z_{34} z_{56} : z_{12} z_{35} z_{46} : z_{12} z_{36} z_{45} : z_{13} z_{24} z_{56} : z_{13} z_{25} z_{46} : z_{13} z_{26} z_{45} : z_{14} z_{23} z_{56} :\\ 
z_{14} z_{25} z_{36} : z_{14} z_{26} z_{35} : z_{15} z_{23} z_{46} : z_{15} z_{24} z_{36} : z_{15} z_{26} z_{34} : z_{16} z_{23} z_{45} : z_{16} z_{24} z_{35} : z_{16} z_{25} z_{34}).
\end{matrix}
\end{equation}
The prime ideal of $\mathcal{S}$ is generated by $10$ linear trinomials, like  $m_0 - m_1 + m_2$, that 
come from Pl\"ucker relations among the $ x_i - x_j$, and one cubic binomial such as
$\,m_0 m_7 m_{12}-m_2 m_6 m_{14}$. For a graphical representation of this ideal we refer to
Howard {\em et al.}~\cite[(1.2)]{HMSV}: for the connection, note that the monomials in \eqref{eq:segremap} naturally correspond to perfect matchings of a set of size $6$, which are the colored graphs in \cite{HMSV}.

To see that this is the same as the classical definition of the Segre cubic, the reader can jump ahead to \eqref{eq:segre_rst} and \eqref{eq:from_m_to_rst}.

The {\em Igusa quartic} $\mathcal{I}$   is the closure of the image of ${\rm Cyc}_4$ under
$\PP^{14} \buildrel{{\rm monomial}} \over{\dashrightarrow} \PP^9$ given~by
\begin{small}
$$
\label{eq:igusamap} 
\begin{matrix} (
z_{12} z_{13} z_{23} z_{45} z_{46} z_{56} \!:\! 
z_{12} z_{14} z_{24} z_{35} z_{36} z_{56} \!:\! 
z_{12} z_{15} z_{25} z_{34} z_{36} z_{46} \!:\!
z_{12} z_{16} z_{26} z_{34} z_{35} z_{45}\!:\!
z_{13} z_{14} z_{34} z_{25} z_{26} z_{56}: \\
z_{13} z_{15} z_{35} z_{24} z_{26} z_{46}\! : \! 
z_{13} z_{16} z_{36} z_{24} z_{25} z_{45}\! : \!
z_{14} z_{15} z_{45} z_{23} z_{26} z_{36} \! : \!
 z_{14} z_{16} z_{46} z_{23} z_{25} z_{35}\! : \!
 z_{15} z_{16} z_{56} z_{23} z_{24} z_{34} )
\end{matrix}
$$
\end{small}
The prime ideal of $\mathcal{I}$ is generated by the five linear forms in
the column vector
\begin{equation}
\label{eq:5by5}
 \begin{pmatrix}
 0 & m_0 & m_1 & m_2 & m_3 \\
m_0&  0& m_4& m_5& m_6 \\
m_1& m_4&  0& m_7& m_8 \\
m_2& m_5& m_7&  0& m_9 \\
m_3& m_6& m_8& m_9& 0
\end{pmatrix}
\cdot 
\begin{pmatrix} \phantom{-}1 \,\\ -1 \,\\
\phantom{-} 1\, \\ -1 \,\\ \phantom{-}1\, \end{pmatrix}
\end{equation}
together with any of the $4 \times 4$-minors of the 
symmetric $5 \times 5$-matrix in (\ref{eq:5by5}).
The linear forms (\ref{eq:5by5}) come from
Pl\"ucker relations of degree $(1,1,1,1,1,1)$ 
on ${\rm Gr}(3,6)$. We note that $m_0,\ldots,m_9$ can be written
in terms of theta functions by Thomae's theorem \cite[\S VIII.5]{DO}.

To see that this is the usual Igusa quartic, one can calculate the projective dual of the quartic hypersurface we have just described and verify that it is a cubic hypersurface whose singular locus consists of $10$ 
nodes. The Segre cubic is the unique cubic in $\PP^4$ with $10$ nodes.

A key ingredient in the study of modular varieties is the symplectic combinatorics of finite vector spaces. Here we consider the binary space $\mathbb{F}_2^4$ with the symplectic form
\begin{equation}
\label{eq:innerproduct}
 \langle x,y \rangle \,\, = \,\,  x_1 y_3 + x_2 y_4 - x_3 y_1 - x_4 y_2 .
 \end{equation}
 We fix the following bijection between the $15$  hyperplanes 
(\ref{eq:xixj})
 and the vectors in $\mathbb{F}_2^4 \backslash \{0\}$:
\begin{equation}
\label{eq:bijection15}
\!\!
\begin{matrix}
z_{12} & z_{13} &  z_{14} &  z_{15} &  z_{16} &  z_{23} &  z_{24} &  z_{25} &  z_{26} &  z_{34} &  
z_{35} &  z_{36} &  z_{45} &  z_{46} &  z_{56} \\
\!\! u_{0001}  \! & \!\! u_{1100}  \! & \!\! u_{1110}  \! & \!\!
u_{0101}  \! &  \!\! u_{0110}  \! & \!\! u_{1101}  \! & \! u_{1111}  \! &  \! u_{0100}  \! &
\! u_{0111}  \! & \! u_{0010} 
 \! & \! u_{1001}  \! &  \! u_{1010}  \! &\! u_{1011}  \! & \! u_{1000}  \! & \!  u_{0011}
\end{matrix}
\end{equation}
This bijection has the property that two
vectors in $\mathbb{F}_2^4 \backslash \{0\}$ 
are perpendicular with respect to (\ref{eq:innerproduct})
if and only if the corresponding elements of the root system 
$\mathrm{A}_5$ are perpendicular. Combinatorially, this means that
 the two pairs of indices are disjoint.
There are precisely $35$ two-dimensional subspaces $L$ in
 $\mathbb{F}_2^4$. Of these planes $L$, precisely $15$ are isotropic,
 which means that $L = L^\perp$. The other $20$ planes naturally
 come in pairs $\{L, L^\perp\}$. Each plane
 is a triple in $\mathbb{F}_2^4 \backslash \{0\}$ 
 and we write it as a cubic monomial 
 $z_{ij} z_{k\ell} z_{mn}$.
Under this identification, the parametrization (\ref{eq:segremap}) of the Segre cubic $\mathcal{S}$ is given by the $15$ isotropic planes $L$, while that  of the Igusa quartic $\mathcal{I}$ is given by the $10$ pairs $L \cdot L^\perp$ of non-isotropic planes in $\mathbb{F}_2^4$.

The symplectic group $ \mathbf{Sp}_4(\mathbb{F}_2)$ 
consists of all linear automorphisms of $\mathbb{F}_2^4$ that
preserve the symplectic form (\ref{eq:innerproduct}). As an abstract group it
is isomorphic to the symmetric group on six letters:
 \begin{align} \label{eqn:level2isom}
\mathbf{Sp}_4(\mathbb{F}_2) \,\,\cong \,\, \Sigma_6.
\end{align}
This group isomorphism is made explicit by the bijection
(\ref{eq:bijection15}).
  
Let $\mathcal{M}_2(2)$ denote the moduli space of
smooth curves of genus $2$ with a level $ 2$ structure.
In light of the isomorphism (\ref{eqn:level2isom}),  a level $2$
structure on a genus $2$ curve $C$  is an ordering of its six Weierstrass points,
and this corresponds to the choice of six labeled points
on the projective line $\PP^1$. The latter choices are parametrized
by the moduli space $\mathcal{M}_{0,6}$.
In what follows,
we consider the {\em open Segre cubic} 
$\,\mathcal{S}^{\circ}  =  \mathcal{S} \backslash \{m_0 m_1 \cdots m_{14}  =0\}\,$
inside the torus  $\mathbb{G}_m^{14} \subset \PP^{14}\,$ and
 the {\em open Igusa quartic} 
$\,\mathcal{I}^{\circ}  =  \mathcal{I} \backslash \{m_0 m_1 \cdots m_9  =0 \}\,$
  inside the torus  $\mathbb{G}_m^{9} \subset \PP^9$.

\begin{proposition}
\label{prop:identification}
We have the following identification of
three-dimensional moduli spaces:
\begin{equation}
\label{eq:SIMM}
\mathcal{S}^{\circ} \,= \,\mathcal{I}^{\circ} \,=\,
\mathcal{M}_2(2) = \mathcal{M}_{0,6}.
\end{equation}
\end{proposition}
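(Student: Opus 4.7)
The plan is to chain together three identifications of three-dimensional moduli spaces, each $\Sigma_6$-equivariant with respect to the actions set up in (\ref{eqn:level2isom}) and (\ref{eq:bijection15}): first $\mathcal{M}_2(2) = \mathcal{M}_{0,6}$, then $\mathcal{M}_{0,6} = \mathcal{S}^\circ$, and finally $\mathcal{M}_{0,6} = \mathcal{I}^\circ$, from which $\mathcal{S}^\circ = \mathcal{I}^\circ$ follows.

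For $\mathcal{M}_2(2) = \mathcal{M}_{0,6}$, I would use the hyperelliptic structure: every smooth curve $C$ of genus $2$ is a unique double cover of $\PP^1$ branched at its six Weierstrass points, so $\mathcal{M}_2 \cong \mathcal{M}_{0,6}/\Sigma_6$. A level $2$ structure on $C$ is a symplectic basis of $\mathrm{Jac}(C)[2] \cong \F_2^4$, and via Abel--Jacobi the fifteen nonzero $2$-torsion points correspond, modulo the hyperelliptic involution, to the fifteen unordered pairs of Weierstrass points. Under this correspondence the abstract isomorphism (\ref{eqn:level2isom}) is realized explicitly by (\ref{eq:bijection15}), and a symplectic basis becomes a labeling of the six Weierstrass points, yielding $\mathcal{M}_2(2) = \mathcal{M}_{0,6}$.

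For $\mathcal{M}_{0,6} = \mathcal{S}^\circ$, I would invoke the classical realization of the Segre cubic as the GIT quotient $(\PP^1)^6 /\!/ PGL_2$; see Dolgachev--Ortland \cite{DO} or Howard \emph{et al.}\ \cite{HMSV}. The arrangement complement of (\ref{eq:xixj}) in $\PP^4$ parametrizes six distinct affine points on $\PP^1$ modulo simultaneous translation and scaling, and the fifteen cubic monomials in (\ref{eq:segremap}) are $SL_2$-semi-invariants of multidegree $(1,\ldots,1)$ spanning the relevant graded piece of the invariant ring of $(\PP^1)^6$. A short calculation shows they are also invariant under the M\"obius inversion $x \mapsto 1/x$ up to a common scalar, so the map descends to the full $PGL_2$-quotient; the removed locus $\{m_0 m_1 \cdots m_{14} = 0\}$ is precisely where two of the six labeled points coincide, identifying $\mathcal{S}^\circ$ with $\mathcal{M}_{0,6}$. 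For $\mathcal{M}_{0,6} = \mathcal{I}^\circ$, observe that each sextic monomial in the Igusa parametrization factors as the product of two cubic Segre monomials indexed by a pair $\{L, L^\perp\}$ of non-isotropic planes in $\F_2^4$; this factors the Igusa map through the Segre map and realizes $\mathcal{I}^\circ$ as a monomial image of $\mathcal{S}^\circ$. Alternatively, via the Torelli theorem $\mathcal{A}_2(2) = \mathcal{M}_2(2)$, and by Thomae's formulas the ten even theta constants $m_0,\ldots,m_9$ embed this moduli space as $\mathcal{I}^\circ$, consistently with the remark after (\ref{eq:5by5}).

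The main obstacle is passing from birational equivalence to a genuine isomorphism of the open loci. This requires checking that both monomial maps from ${\rm Cyc}_4 \cap \mathbb{G}_m^{14}$ are injective modulo the $PGL_2$-action, and that the removed boundary divisors $\{m_i = 0\}$ on the Segre and Igusa sides correspond to the same degenerations of six points on $\PP^1$. Both assertions are combinatorial consequences of the bijection (\ref{eq:bijection15}) and the symplectic structure of $\F_2^4$, specifically the partition of $35$ planes in $\F_2^4$ into $15$ isotropic ones and $10$ non-isotropic complementary pairs.
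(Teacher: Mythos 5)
Your overall strategy---chaining $\mathcal{M}_2(2)=\mathcal{M}_{0,6}$, $\mathcal{M}_{0,6}=\mathcal{S}^\circ$, $\mathcal{M}_{0,6}=\mathcal{I}^\circ$---is reasonable and more self-contained than the paper's proof, which simply quotes Hunt (the isomorphism of the open sets $D$ and $D'$ in the proof of his Theorem 3.3.11 for $\mathcal{S}^\circ=\mathcal{I}^\circ$, and his Theorem 3.3.8 on the Kummer functor for $\mathcal{I}^\circ=\mathcal{M}_2(2)$) together with the observation that the two monomial maps have the same kernel (Lemma \ref{lem:kernel}). Your first step (ordering of Weierstrass points) is exactly the paper's, and your GIT description of $\mathcal{S}^\circ$ is a legitimate alternative route consistent with \cite{DO} and \cite{HMSV}.

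However, there is a concrete error in your treatment of $\mathcal{I}^\circ$. The sextic Igusa monomials do \emph{not} factor as products of two cubic Segre monomials. The Segre coordinates \eqref{eq:segremap} are indexed by perfect matchings of $\{1,\dots,6\}$ (isotropic planes in $\F_2^4$), e.g.\ $z_{12}z_{34}z_{56}$, whereas the two cubic factors of an Igusa monomial such as $z_{12}z_{13}z_{23}\cdot z_{45}z_{46}z_{56}$ are ``triangles'' $z_{ij}z_{ik}z_{jk}$, corresponding to \emph{non-isotropic} planes; these are not among the fifteen Segre coordinates, and indeed no perfect matching is contained in the support $\{12,13,23,45,46,56\}$. (One can write this Igusa monomial as a \emph{Laurent} monomial in the Segre coordinates, e.g.\ $m_{\{12,34,56\}}m_{\{13,26,45\}}m_{\{15,23,46\}}/m_{\{15,26,34\}}$, which is the multiplicative content of Lemma \ref{lem:kernel}; but your stated factorization is false, and the lattice comparison is delicate---it is precisely the source of the multiplicity $2$ on $\operatorname{trop}(\mathcal{I})$ in Theorem \ref{thm:tropsegrecubic}.) Separately, your closing paragraph correctly identifies the real issue---that a common birational model must be upgraded to an isomorphism of the specified open loci---but then dismisses it as ``a combinatorial consequence'' of \eqref{eq:bijection15} without argument. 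That injectivity statement is exactly what the paper's citations to Hunt supply, so as written your proof has not actually discharged the step that carries the content of the proposition.
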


\begin{proof}
We already argued the last equation.
The first equation is the isomorphism between
the open sets $D$ and $D'$ in the proof of
\cite[Theorem 3.3.11]{hunt}. A nice way to 
see this isomorphism is that the kernels
of our two monomial maps coincide (Lemma~\ref{lem:kernel}).
The middle equation  follows from the
last part of \cite[Theorem 3.3.8]{hunt},
which concerns the Kummer functor $\mathbf{K}_2$.
For more information on  the modular interpretations
of $\mathcal{S}$ and $\mathcal{I}$ see \cite[\S VIII]{DO}.
\end{proof}

The Kummer surface associated to a point
in $\mathcal{I}^{\circ}$ is the intersection of the Igusa quartic
$\mathcal{I}$ with the tangent space at that point,
 by \cite[Theorem 3.3.8]{hunt}.
We find it convenient to express that Kummer surface
in terms of the corresponding point in $\mathcal{S}^{\circ}$.
Following Dolgachev and Ortland \cite[\S IX.5, Proposition~6]{DO},
we write the defining equation of the Segre cubic
$\mathcal{S}$ as
\begin{equation}
\label{eq:segre_rst}
  16 r^3 - 4 r (s_{01}^2+s_{10}^2+s_{11}^2) + 4 s_{01} s_{10} s_{11} + r t^2 \,\,= \,\, 0. 
\end{equation}
The embedding of the $\PP^4$ 
with coordinates $(r \!:\!s_{01}\!:\!s_{10}\!:\!s_{11}\!:\!t) $ into our $\PP^{14}$ can be written as
\begin{equation}
\label{eq:from_m_to_rst}
\begin{matrix}
 r= m_0 ,\qquad
 s_{01} =  2m_0 - 4m_1 ,\qquad
 s_{10} =  2m_0 - 4m_3, \qquad \\ \qquad
 s_{11} = 4m_4 - 2m_0  - 4m_7 ,\qquad
 t = 8(m_1+ m_3 - m_0  - m_4 -   m_7).
 \end{matrix}
\end{equation}
This does not pick out an $\Sigma_6$-equivariant embedding of the space spanned by the $r, s_{ij}$ in the permutation representation of the $m_i$, but it has the advantage of giving short expressions.
Fixing Schr\"odinger coordinates
$(x_{00}\!:\! x_{01 \!}\!:x_{10}\!: \! x_{11})$ on $\PP^3$, the 
 Kummer surface is now given~by
\begin{equation}
 \label{eq:kummer}
 \begin{matrix}
     r (x_{00}^4+x_{01}^4+x_{10}^4+x_{11}^4)
+  s_{01} (x_{00}^2 x_{01}^2+x_{10}^2 x_{11}^2)
+  s_{10} (x_{00}^2 x_{10}^2+x_{01}^2 x_{11}^2) \\
+  s_{11} (x_{00}^2 x_{11}^2 + x_{01}^2 x_{10}^2)
+    t  (x_{00} x_{01} x_{10} x_{11}) \quad = \quad 0.
\end{matrix}
\end{equation}
This equation is the determinant of the
$5 \times 5$-matrix in
\cite[Example 1.1]{RSSS}. Its lower
$4 \times 4$-minors satisfy (\ref{eq:segre_rst}).
Our notation is consistent with that for the Coble quartic in \cite[(2.13)]{RSSS}.

We now come to the tropicalization
of our three-dimensional moduli spaces.
We write $e_{12}, e_{13}, \ldots, e_{56}$ for
the unit vectors in $\mathbb{TP}^{14} = 
\mathbb{R}^{15}/\mathbb{R} (1,1,\ldots,1)$.
These correspond to our coordinates
$z_{12}, z_{13}, \ldots, z_{56}$ on the
$\PP^{14}$ which contains ${\rm Cyc}_4 \simeq \PP^4$.
The $56$ rays of the Bergman fan ${\rm trop}({\rm Cyc}_4)$
are indexed by proper subsets $\sigma \subsetneqq \{1,2,3,4,5,6\}$
with $|\sigma| \geq 2$. They are
$$ E_\sigma \,\, \, = \,\, \sum_{\{i,j\} \subseteq \sigma} e_{ij}. $$
Cones in ${\rm trop}({\rm Cyc}_4)$ are spanned by
collections of $E_\sigma$ whose indices $\sigma$ are nested or disjoint.

Let $A_{\rm segre}$ denote the $15 \times 15$-matrix with entries in $\{0,1\}$ that represents the tropicalization of the monomial map (\ref{eq:segremap}). The columns of $A_{\rm segre}$ are indexed by (\ref{eq:bijection15}). The rows of $A_{\rm segre}$ are indexed by tripartitions of $\{1,2,\ldots,6\}$,
or by isotropic planes in $\mathbb{F}_2^4$.
An entry is $1$ if the pair that indexes the column
appears in the tripartition that indexes the row, 
or, equivalently, if the line of $\mathbb{F}_2^4$
that indexes the column is contained in the plane that indexes the row.
Note that each row and each column of $A_{\rm segre}$ has
precisely three nonzero entries.

We similarly define the $10 \times 15$-matrix $A_{\rm igusa}$
with entries in $\{0,1\}$ that represents the monomial
map for the Igusa quartic. Its rows have six nonzero
entries and its columns have four nonzero entries.
The column labels of $A_{\rm igusa}$ are the same
as those of $A_{\rm segre}$. The rows
are now labeled by bipartitions of $\{1,2,\ldots,6\}$, or by pairs of non-isotropic planes in $\mathbb{F}_2^4$.
 
\begin{lemma} \label{lem:kernel}
The matrices $A_{\rm segre}$ and $A_{\rm igusa}$ have the same kernel.
This kernel is the $5$-dimensional subspace spanned by the vectors $E_\sigma - E_{\sigma^c}$ where $\sigma$ runs over triples in $\{1,2,\ldots,6\}$.
\end{lemma}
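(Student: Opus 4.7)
The plan is to verify three claims in sequence: (a) each $v_\sigma := E_\sigma - E_{\sigma^c}$ lies in $\ker A_{\rm segre} \cap \ker A_{\rm igusa}$; (b) the $v_\sigma$'s span a $5$-dimensional subspace $U$; and (c) both kernels have dimension at most $5$. For (a), pairing a row of $A_{\rm segre}$ indexed by a tripartition $T$ with $v_\sigma$ gives the difference between the number of pairs of $T$ contained in $\sigma$ and those contained in $\sigma^c$. Since $|\sigma| = |\sigma^c| = 3$, an elementary parity count ($2a + c = 2b + c = 3$, with $c$ the number of crossing pairs) forces these counts to be equal. For $A_{\rm igusa}$, a row indexed by a bipartition $\{\tau, \tau^c\}$ pairs with $v_\sigma$ to give $\binom{k}{2} + \binom{3-k}{2} - \binom{3-k}{2} - \binom{k}{2} = 0$, where $k = |\sigma \cap \tau|$ and the complementation symmetry $k \leftrightarrow 3-k$ swaps the other intersection sizes.

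For (b), I would identify $U$ with the image of the injective linear map $\psi \colon H \to \R^{15}$, where $H = \{f \in \R^6 : \sum_i f(i) = 0\}$ is the standard $\Sigma_6$-representation and $\psi(f)_{\{i,j\}} = f(i) + f(j)$. A direct check gives $\psi(\tfrac{1}{2}(\mathbf{1}_\sigma - \mathbf{1}_{\sigma^c})) = v_\sigma$, and pairwise differences of these preimages (e.g.\ $f_{\{1,a,b\}} - f_{\{1,a,b'\}} = e_{b'} - e_b$) produce the standard basis of $H$, so the preimages span $H$. Hence $\dim U = \dim H = 5$.

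For (c), the cleanest direct computation is for $A_{\rm igusa}$: the Gram matrix $A_{\rm igusa} A_{\rm igusa}^T$ has diagonal $6$ and all off-diagonal entries $2$, since the common refinement of two distinct bipartitions has cell sizes $(1,2,2,1)$ and thus $\binom{1}{2} + \binom{2}{2} + \binom{2}{2} + \binom{1}{2} = 2$ pairs non-crossing in both. Hence $A_{\rm igusa} A_{\rm igusa}^T = 4 I_{10} + 2 J_{10}$ is positive definite, so $A_{\rm igusa}$ has rank $10$ and kernel dimension $5$. Combined with (a) and (b), this already identifies $\ker A_{\rm igusa} = U$.

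To finish, I would establish $\ker A_{\rm segre} \subseteq \ker A_{\rm igusa}$ by showing each row of $A_{\rm igusa}$ lies in the row-span of $A_{\rm segre}$. For a fixed bipartition $B$, the $15$ tripartitions split into $9$ with exactly one pair crossing $B$ and $6$ with all three pairs crossing; summing $\chi_T$ over each class gives $2\chi_{\rm nc}(B) + \mathbf{1}$ and $2\mathbf{1} - 2\chi_{\rm nc}(B)$ respectively, from which (also using $\mathbf{1} = \tfrac{1}{3}\sum_T \chi_T$) one solves $\chi_{\rm nc}(B) = \tfrac{1}{3}\!\sum_{c(T,B)=1}\!\chi_T - \tfrac{1}{6}\!\sum_{c(T,B)=3}\!\chi_T$ as an explicit rational combination of rows of $A_{\rm segre}$. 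The main obstacle is spotting this combinatorial identity, since there is no 1-factor of $K_6$ supported on $K_3 \sqcup K_3$ and a signed combination is forced. A fall-back is to compute $A_{\rm segre}^T A_{\rm segre} = 3 I_{15} + C$, with $C$ the adjacency matrix of the Kneser graph $K(6,2)$, and invoke its classical spectrum $\{6^{(1)}, 1^{(9)}, -3^{(5)}\}$ to read off nullity $5$ directly.
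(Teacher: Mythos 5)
Your proof is correct, and it supplies genuine content where the paper supplies none: the paper dismisses this lemma with the single sentence ``This lemma can be proved by a direct computation,'' presumably meaning a machine check of two explicit $0$-$1$ matrices. All of your steps check out. In (a), the counts $2a+c=2b+c=3$ do force $a=b$ for the Segre rows, and the $k\leftrightarrow 3-k$ symmetry handles the Igusa rows. In (b), the map $\psi(f)_{\{i,j\}}=f(i)+f(j)$ correctly identifies the span of the $E_\sigma-E_{\sigma^c}$ with the $5$-dimensional standard representation of $\Sigma_6$ (your displayed difference is $e_b-e_{b'}$ rather than $e_{b'}-e_b$, but this is immaterial). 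In (c), the common refinement of two distinct $3{+}3$ bipartitions indeed has cell sizes $(1,2,2,1)$ or $(2,1,1,2)$, giving $A_{\rm igusa}A_{\rm igusa}^T=4I_{10}+2J_{10}$, which is positive definite, so $\ker A_{\rm igusa}$ is exactly the $5$-dimensional space $U$. Your final identity also verifies: of the three perfect matchings through a non-crossing (resp.\ crossing) pair, $3$ (resp.\ $1$) have exactly one crossing edge, so $\sum_{c(T,B)=1}\chi_T=2\chi_{\rm nc}(B)+\mathbf{1}$ and $\sum_{c(T,B)=3}\chi_T=2\mathbf{1}-2\chi_{\rm nc}(B)$, and the stated rational combination recovers $\chi_{\rm nc}(B)$, giving $\ker A_{\rm segre}\subseteq\ker A_{\rm igusa}$. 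The fallback via $A_{\rm segre}^TA_{\rm segre}=3I+C$ with $C$ the Kneser graph $K(6,2)$ adjacency matrix (spectrum $6^{(1)},1^{(9)},(-3)^{(5)}$, hence nullity $5$ for $3I+C$) is arguably the slickest route and makes the $\Sigma_6$-representation theory visible: the $15$-dimensional permutation module on pairs decomposes as trivial $\oplus$ standard $\oplus$ a $9$-dimensional piece, and the kernel is precisely the standard summand. Either route is a satisfying conceptual replacement for the paper's unstated computation.
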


This lemma can be proved by a direct computation.
The multiplicative version of this fact  implies the
identity $\,\mathcal{S}^{\circ} = \mathcal{I}^{\circ}\,$ as seen in
Proposition \ref{prop:identification}. We have the following result.

\begin{theorem} \label{thm:tropsegrecubic}
The tropical Segre cubic ${\rm trop}(\mathcal{S}) $ in $\mathbb{TP}^{14} $ 
is the image  of  ${\rm trop}({\rm Cyc}_4)$ under the linear map
 $A_{\rm segre}$. The tropical Igusa quartic ${\rm trop}(\mathcal{I}) $ in $\mathbb{TP}^{9} $
is the image  of  ${\rm trop}({\rm Cyc}_4)$ under the linear map
$A_{\rm igusa}$. These two $3$-dimensional fans are affinely isomorphic to each other,
but all maximal cones of  ${\rm trop}(\mathcal{I}) $ come with multiplicity two.
The underlying simplicial complex has
$25$ vertices, $105$ edges and $105$ triangles. This
is the tree space ${\rm trop}(\mathcal{M}_{0,6})$.
\end{theorem}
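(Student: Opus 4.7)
The plan is to derive the two set-theoretic equalities from the push-forward theorem of Dickenstein-Feichtner-Sturmfels, to obtain the affine isomorphism and the multiplicities by combining Lemma~\ref{lem:kernel} with the tropical push-forward formula, and to read off the face numbers from the identification $\mathcal{S}^{\circ} = \mathcal{M}_{0,6}$ in Proposition~\ref{prop:identification}. First I would invoke \cite[Theorem 3.1]{DFS}: the tropicalization of the image of a hyperplane-arrangement complement under a monomial map with exponent matrix $A$ equals the set-theoretic image of the Bergman fan of the arrangement under the induced integer-linear map. Applied to the reflection arrangement \eqref{eq:xixj} on $\mathrm{Cyc}_4$, whose Bergman fan is $\mathrm{trop}(\mathrm{Cyc}_4)$ by the preceding lemma, and to the two monomial maps defining $\mathcal{S}$ and $\mathcal{I}$, whose exponent matrices are $A_{\mathrm{segre}}$ and $A_{\mathrm{igusa}}$ by construction, this yields the first two assertions. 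For the affine isomorphism, I would invoke Lemma~\ref{lem:kernel}: since $\ker A_{\mathrm{segre}} = \ker A_{\mathrm{igusa}}$, both push-forwards factor through the same quotient of $\mathrm{trop}(\mathrm{Cyc}_4)$ and are then re-embedded into $\mathbb{TP}^{14}$ and $\mathbb{TP}^{9}$ by two different injective linear maps.

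For the multiplicities, I would apply the tropical push-forward formula for cycles along an integer-linear map. The Bergman-fan multiplicities on the source are all $1$, so the computation reduces to a lattice-index check for each maximal image cone, which can be carried out orbit-wise under the $\Sigma_6$-action. One finds that this index equals $1$ on every maximal cone of $\mathrm{trop}(\mathcal{S})$ and equals $2$ on every maximal cone of $\mathrm{trop}(\mathcal{I})$. I expect this index computation to be the main obstacle of the proof: the factor of two reflects the fact that the Igusa monomials have degree $6$, being products $L \cdot L^\perp$ over pairs of complementary non-isotropic planes, while the Segre monomials have degree $3$ and are indexed by single isotropic planes; this doubling creates an index-$2$ sublattice relation between the two row lattices, and translating it to the cone-wise statement requires careful bookkeeping on a representative cone.

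Lastly, to identify the underlying simplicial complex with $\mathrm{trop}(\mathcal{M}_{0,6})$ and obtain the face numbers, I would apply Proposition~\ref{prop:identification}, which gives $\mathcal{S}^{\circ} = \mathcal{M}_{0,6}$. By the Speyer-Sturmfels tree-space description \cite[\S 4]{SS}, $\mathrm{trop}(\mathcal{M}_{0,6})$ has $\binom{6}{2} + \binom{6}{3}/2 = 15 + 10 = 25$ vertices (the splits of $\{1,\dots,6\}$ of types $(2,4)$ and $(3,3)$), $105$ edges (pairs of compatible splits), and $(2 \cdot 6 - 5)!! = 105$ triangles (trivalent labeled trees on $6$ leaves). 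As a cross-check, one verifies directly that the $56$ rays of $\mathrm{trop}(\mathrm{Cyc}_4)$ collapse under $A_{\mathrm{segre}}$ to exactly these $25$: the $20$ triple-rays pair up as $E_\sigma \sim E_{\sigma^c}$, the $15$ quadruple-rays coincide with the $15$ doubleton-rays, and the $6$ quintuple-rays map to the origin.
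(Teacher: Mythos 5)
Your proposal follows essentially the same route as the paper: the set-theoretic images via \cite[Theorem 3.1]{DFS}, the affine isomorphism via Lemma~\ref{lem:kernel}, the collapse of the $56$ rays to $25$ (with $E_\sigma \mapsto E_{\sigma^c}$ identified and the singleton-complement rays killed), and a cone-by-cone lattice-index check for the multiplicities, which the paper phrases as the observation that $A_{\rm igusa}E_{ijk}$ is twice a unit vector and backs up with a {\tt gfan} computation. The only substantive difference is cosmetic: you read the face numbers off the Speyer--Sturmfels tree-space count, whereas the paper derives them from the rooted-to-unrooted tree correspondence (each of the $945$ maximal cones of ${\rm trop}({\rm Cyc}_4)$ maps onto one of the $105$ unrooted trees, nine-to-one over the choice of root location); both your multiplicity sketch and the paper's leave the actual index verification to a representative-cone computation.
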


\begin{proof}
The fact that we can compute the tropicalization of
the image of a linear space under a monomial map
by just applying the tropicalized monomial map
$A_\bullet$ to the
Bergman fan is \cite[Theorem 3.1]{DFS}.
The fact that the two tropical threefolds are
affinely isomorphic  follows immediately from Lemma~\ref{lem:kernel}.
To analyze the combinatorics of this common image fan, 
we set $E_\sigma $ to be the zero vector
when $\sigma = \{i\}$ is a singleton.
With this convention, we have
$$ A_{\rm segre} E_\sigma = A_{\rm segre} E_{\sigma^c} \quad
\hbox{and} \quad
A_{\rm igusa} E_\sigma = A_{\rm igusa} E_{\sigma^c} 
 $$
 for all proper subsets $\sigma$ of $\{1,2,\ldots,6\}$.
  We conclude that the $56 = 15 + 20 + 15 + 6$ rays of the
Bergman fan ${\rm trop}({\rm Cyc}_4)$ get mapped to
$25 = 15 + 10$ distinct rays in the image fan.

The cones in ${\rm trop}({\rm Cyc}_4)$ 
correspond to equidistant trees, that is,
\underline{rooted} metric trees on six taxa. Combinatorially,
our map corresponds to removing the root from the tree, so the
cones in the image fan correspond to 
\underline{unrooted} metric trees on six taxa.
Specifically, each of the $945$ maximal cones of
${\rm trop}({\rm Cyc}_4)$ either has one ray 
$E_{\{i,j,k,\ell,m\}}$ that gets mapped to zero,
or it has two rays $E_\sigma$
and $E_{\sigma^c}$ that become identified.
Therefore its image is three-dimensional.
Our map takes the $945$ simplicial cones of dimension $4$ in
${\rm trop}({\rm Cyc}_4)$
onto the $105$ simplicial cones of dimension $3$,
one for each unrooted tree. The fibers involve precisely nine
cones because each trivalent tree on six taxa has
nine edges, each a potential root location.
Combinatorially, nine rooted trivalent trees map to the same
unrooted  tree.

It remains to analyze the multiplicity of each maximal cone in the image.
The $105$ maximal cones in ${\rm trop}(\mathcal{S})$
all have multiplicity one, while the corresponding cones
in ${\rm trop}(\mathcal{I})$ have multiplicity two. 
We first found this by a direct calculation using the software ${\tt gfan}$ \cite{jensen},
starting from the homogeneous ideals of   $\mathcal{S}$ and ${\mathcal{I}}$ described above.
It can also be seen by examining the images of the rays $E_\tau$
under each matrix $A_\bullet$ modulo the line spanned by the
 vector $(1,1,\ldots,1)$.
Each of the $15$ vectors  $A_{\rm igusa} E_{ij}$ is the
sum of four unit vectors in $\mathbb{TP}^9$, while
the $10$ vectors  $A_{\rm igusa} E_{ijk}$
are the ten unit vectors multiplied by the factor $2$.
\end{proof}

We next discuss the tropicalization of the universal family 
of Kummer surfaces over $\mathcal{S}^{\circ}$.
This is the hypersurface in $\,\mathcal{S}^{\circ} \times \mathbb{P}^3$
defined by the equation  (\ref{eq:kummer}). 
The tropicalization of this hypersurface
is a five-dimensional fan whose fibers over
the tree space $\,{\rm trop}(\mathcal{S})\,$
are the  tropical Kummer surfaces in 
$\mathbb{TP}^3$.
We computed this fan from the equations using
{\tt gfan} \cite{jensen}.

\begin{proposition}
The tropicalization of the universal Kummer surface in the coordinates $((m_0:m_1:\dotsb{}:m_{14}),(x_{00}:x_{01}:x_{10}:x_{11}))$ is a $5$-dimensional polyhedral fan
in $\mathbb{TP}^{14} \times \mathbb{TP}^3$. This fan has $56$ rays and $1536$ maximal cones,
and its f-vector is  $\,(56, 499 ,1738, 2685, 1536)$.
\end{proposition}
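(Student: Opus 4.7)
The plan is to realize the universal Kummer surface $\mathcal{U} \subset \mathcal{S} \times \PP^3$ as the vanishing locus of an explicit ideal $I$ in $K[m_0,\ldots,m_{14},\, x_{00},x_{01},x_{10},x_{11}]$ and then tropicalize $I$ directly using \texttt{gfan}~\cite{jensen}. The ideal $I$ is generated by (i) the ten linear trinomials and the cubic binomial that cut out the Segre cubic $\mathcal{S} \subset \PP^{14}$, together with (ii) the single quartic polynomial in the $x$-variables whose coefficients $r, s_{01}, s_{10}, s_{11}, t$ are the linear forms in $m_0,\ldots, m_{14}$ recorded in (\ref{eq:from_m_to_rst}), substituted into the Kummer equation (\ref{eq:kummer}). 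Because these are all of the relations, $\mathcal{U}$ is a hypersurface in $\mathcal{S} \times \PP^3$ and $\mathrm{trop}(\mathcal{U}) = \mathrm{trop}(I)$.

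Before running the computation I would verify the expected dimension by hand. The base $\mathcal{S}$ is three-dimensional and each smooth Kummer fiber is a surface, so $\mathcal{U}$ is five-dimensional, and the Bieri--Groves theorem forces $\mathrm{trop}(\mathcal{U})$ to be pure of the same dimension. Moreover the projection $\mathcal{U} \to \mathcal{S}$ tropicalizes to a surjection onto $\mathrm{trop}(\mathcal{S})$, so the $25$ rays supplied by Theorem~\ref{thm:tropsegrecubic} must lift to rays of $\mathrm{trop}(\mathcal{U})$, accounting for part of the promised $56$ rays; the remainder should come from purely fiber directions in $\mathbb{TP}^3$ and from mixed rays where $m$- and $x$-coordinates degenerate together.

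The substantive step is the computation itself. I would compute a bihomogeneous Gr\"obner basis of $I$, find a single maximal cone of $\mathrm{trop}(I)$ via \texttt{gfan\_tropicalstartingcone}, and then walk the full fan with \texttt{gfan\_tropicaltraverse}. To make this tractable I would feed \texttt{gfan} the symmetry group $\Sigma_6 \cong \mathbf{Sp}_4(\F_2)$ acting simultaneously on the $m_i$ through the bijection (\ref{eq:bijection15}) and on the $x_{ab}$ through the Schr\"odinger representation; the traversal then enumerates only orbit representatives of maximal cones, and the full fan is reconstructed by applying the group action. The ray count, the maximal-cone count, and the f-vector $(56, 499, 1738, 2685, 1536)$ are read off the output, after which one checks that the rays project compatibly with the $25$-ray fan $\mathrm{trop}(\mathcal{S})$ of Theorem~\ref{thm:tropsegrecubic}.

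The main obstacle is the sheer size of the computation: the ambient ring has $19$ variables, and the mixed bidegree of the Kummer relation produces heavy initial ideals along many interior weight vectors, so the choice of starting cone and monomial order is delicate. A secondary difficulty is the symmetry bookkeeping: after \texttt{gfan} returns orbit representatives, one must confirm that unfolding along $\Sigma_6$-orbits reproduces the claimed face numbers without collapse from unexpected stabilizers. Once both the dimension check and the symmetric enumeration are consistent with the numerical invariants printed in the statement, the proposition is established.
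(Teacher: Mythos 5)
Your proposal matches the paper's approach: the paper likewise realizes the universal Kummer surface as the hypersurface in $\mathcal{S}^{\circ}\times\PP^3$ cut out by equation (\ref{eq:kummer}) with coefficients given by (\ref{eq:from_m_to_rst}), and obtains the f-vector by a direct {\tt gfan} computation from these defining equations. Your added dimension check and use of the $\Sigma_6$-symmetry in the traversal are sensible implementation details but do not change the method.
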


Instances of tropical Kummer surfaces can be obtained by slicing the above fan with fixed values of the $15$ tropical $m$ coordinates. Figure~\ref{fig:kummer-snowflake} shows the tropicalization of a Kummer surface
 over a snowflake tree  (Type (7) in Table \ref{table-tropical-moduli}).
It consists of $30$ two-dimensional polyhedra, $24$ unbounded and
$6$ bounded. The latter $6$ form the facets of a parallelepiped.

\begin{figure}[h!]
\centering
\includegraphics[scale=0.5]{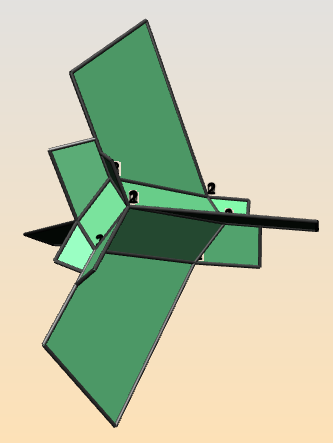}
\label{fig:kummer-snowflake}
\caption{Tropicalization of a Kummer surface over a snowflake tree.}
\end{figure}

Figure~\ref{fig:kummer-caterpillar} shows a tropical Kummer surface over
a caterpillar tree (Type (6) in Table \ref{table-tropical-moduli}).
It consists of $33$ two-dimensional polyhedra, $24$ bounded and
$9$ bounded. The latter $9$ polygons form a subdivision of a flat octagon.
These two pictures were drawn using {\tt polymake} \cite{GJ}.

\begin{figure}[h!]
\centering
\includegraphics[scale=0.48]{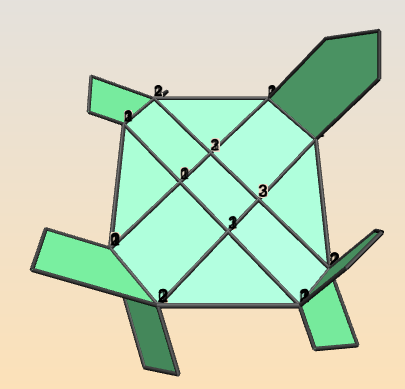}
\label{fig:kummer-caterpillar}
\caption{Tropicalization of a Kummer surface over a caterpillar tree.}
\end{figure}

On each Kummer surface we could now
identify a tree that represents the bicanonical image of the associated
genus $2$ curve. Classically, one obtains a double quadric with six distinguished points
by intersecting with any of the planes in the $16_6$ configuration
\cite[(1.2)]{RSSS}.

The tropical variety described in Theorem \ref{thm:tropsegrecubic}
defines the {\em tropical compactification} $\overline{\mathcal{S}}$ of the
Segre cubic $\mathcal{S}$. By definition, the
threefold $\overline{\mathcal{S}}$ is the closure
of $\mathcal{S}^{\circ}$ in the toric variety determined by the
given fan structure on ${\rm trop}(\mathcal{S})$.
For details, see Tevelev's article \cite{Tev}.

This tropical compactification of our moduli space (\ref{eq:SIMM})
is intrinsic. To see this, we recall that the {\em intrinsic torus}
of a very affine variety $X \subset  \mathbb{G}_m^n$
is the torus whose character lattice is the finitely generated
multiplicative free abelian group $K[X]^*/K^*$.
The following lemma can be used to find the intrinsic torus for 
each of the very affine varieties in this paper.

\begin{lemma} \label{lem:intrinsictorusimage}
Let $m \colon T_1 \to T_2$ be a monomial map of tori and $U \subset T_1$
a subvariety embedded in its intrinsic torus. Then $\overline{m(U)} \subset m(T_1)$ is the embedding of $\overline{m(U)}$ in its intrinsic torus.
\end{lemma}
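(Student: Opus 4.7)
The plan is to recast the statement in terms of character lattices and then prove it by a diagram chase combined with a descent argument. Set $T_2' := m(T_1)$ and $V := \overline{m(U)} \subset T_2'$. The surjection $T_1 \twoheadrightarrow T_2'$ dualizes to an inclusion $i : M_{T_2'} \hookrightarrow M_{T_1}$ on character lattices, and restriction of characters yields maps $\rho_U : M_{T_1} \to K[U]^*/K^*$ and $\rho_V : M_{T_2'} \to K[V]^*/K^*$. Since $m|_U : U \to V$ is dominant, the pullback $m^* : K[V] \hookrightarrow K[U]$ is injective, so the induced map $\mu : K[V]^*/K^* \hookrightarrow K[U]^*/K^*$ is also injective. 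These assemble into a commutative square whose commutativity comes from the identity $\chi \circ m|_U = (\chi|_V) \circ m|_U$ for $\chi \in M_{T_2'}$, and the hypothesis that $U$ is embedded in its intrinsic torus is precisely the statement that $\rho_U$ is an isomorphism. The goal is to show $\rho_V$ is an isomorphism.

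Injectivity of $\rho_V$ is an immediate diagram chase: if $\chi \in M_{T_2'}$ satisfies $\rho_V(\chi) = 0$, then $\rho_U(i(\chi)) = \mu(\rho_V(\chi)) = 0$, forcing $i(\chi) = 0$ by $\rho_U$ being an isomorphism, and then $\chi = 0$ by $i$ being injective. For surjectivity, given $f \in K[V]^*$ I would pull back to $m^*f \in K[U]^*$ and apply $\rho_U^{-1}$ to obtain a unique $\chi \in M_{T_1}$ with $m^*f = c \cdot \chi|_U$ for some $c \in K^*$; the problem then reduces to showing $\chi \in i(M_{T_2'})$. Splitting the exact sequence $1 \to \ker m \to T_1 \to T_2' \to 1$ identifies $T_1 \cong T_2' \times K$ with $K := \ker m$, and decomposes $M_{T_1} = M_{T_2'} \oplus M_K$ accordingly. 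The pullback $m^*f$ extends to $m^{-1}(V) = V \times K$ as a function independent of the $K$-factor, whereas a character $\chi = (\chi_V, \chi_K)$ of $T_1$ is $\ker(m)$-invariant exactly when its $M_K$-component $\chi_K$ vanishes; the task is to promote the identity $m^*f = c\chi|_U$ on $U$ to the conclusion $\chi_K = 0$.

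The main obstacle is that $U$ is in general not $\ker(m)$-invariant, so the $K$-invariance of $m^*f$ on $m^{-1}(V)$ does not propagate to $\chi$ on $T_1$ automatically. To close this gap I would apply Rosenlicht's theorem on units in products of geometrically integral varieties to decompose $K[V \times K]^*/K^* \cong K[V]^*/K^* \oplus M_K$, and trace the induced restriction $K[V \times K]^*/K^* \to K[U]^*/K^* = M_{T_1}$. A non-trivial $\chi_K$ would force the existence of $\alpha \in K[V]^*$ with $\chi_K \circ \pi_K |_U = c' \cdot \alpha \circ m|_U$ for some $c' \in K^*$, producing a unit on $U$ whose class in $K[U]^*/K^* \cong M_{T_1}$ has non-zero $M_K$-component yet arises as a pullback from $V$; the rigidity encoded by $U$ being embedded in its intrinsic torus should then be exploited to rule this out, completing the proof that $\chi \in i(M_{T_2'})$ and hence that $\rho_V$ is surjective.
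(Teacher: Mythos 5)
Your setup, the commutative square of restriction maps on character lattices, and the injectivity of $\rho_V$ are all correct, and they match the paper's proof (your $\mu$ is the paper's $\phi$, and your reduction of surjectivity to the membership $\chi \in i(M_{T_2'})$ is the paper's observation that $m^*(K[\overline{m(U)}])$ lies in the subalgebra generated by the $z_i$). The problem is that the surjectivity argument is never finished. After correctly reducing to showing that $\chi = \rho_U^{-1}(\mu(f))$ lies in $i(M_{T_2'})$, you propose to split $T_1 \cong T_2' \times \ker m$ --- which already requires $\ker m$ to be connected, and a monomial map need not have connected kernel (e.g.\ $x \mapsto x^2$) --- then invoke Rosenlicht's unit theorem, and finally assert that ``the rigidity encoded by $U$ being embedded in its intrinsic torus should then be exploited to rule this out.'' That last clause is the entire content of the lemma, and no argument is supplied for it; as you yourself note, $U$ is not $\ker(m)$-invariant, so the $\ker(m)$-invariance of $m^*f$ on $m^{-1}(V)$ gives no a priori control over $\chi$.

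The gap is genuine and cannot be closed by the formal bookkeeping you outline, because the membership $\chi \in i(M_{T_2'})$ can actually fail. Take $T_1 = T_2 = \mathbb{G}_m^2$, $m(x_1,x_2) = (x_1^2, x_2)$, and $U = \{x_1 + x_2 = 1\} \cap \mathbb{G}_m^2$; writing $t = x_1|_U$, the group $K[U]^*/K^*$ is free on $t$ and $1-t$, so $U$ sits in its intrinsic torus. Here $m(T_1) = \mathbb{G}_m^2$ and $V = \overline{m(U)} = \{y_1 = (1-y_2)^2\} \cap \mathbb{G}_m^2$. On $V$ one has $t = 1 - y_2|_V \in K[V]$ and $t^{-1} = t\cdot y_1^{-1}|_V \in K[V]$, so $t$ is a unit of $K[V]$ whose class $(1,0)$ does not lie in the subgroup $2\Z \oplus \Z$ generated by $y_1|_V = t^2$ and $y_2|_V = 1-t$; the corresponding $\chi = x_1$ is not in $i(M_{T_2'})$. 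So any correct proof must use hypotheses beyond those you (or the statement) invoke. For what it is worth, this is exactly the point where the paper's own proof is silent: it asserts $\phi(f) = z_1^{d_1}\cdots z_s^{d_s}$, whereas the intrinsic-torus hypothesis only gives that $\phi(f)$ is a Laurent monomial in the ambient coordinates $x_1,\dots,x_r$, and the claim that its exponent vector lies in the image of $M_{T_2}\to M_{T_1}$ is precisely your unproved step. You have located the crux correctly, but you have not discharged it.
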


\begin{proof}
Choose identifications $K[T_1] = K[x_1^{\pm}, \dots, x_r^{\pm}]$ and $K[T_2] = K[y_1^{\pm}, \dots, y_s^{\pm}]$. By assumption, the pullback $m^*(y_i)$ is a monomial in the $x_j$, which we call $z_i$. We have an injection 
of rings $m^* \colon K[\overline{m(U)}] \subset K[U]$, and hence we get an induced injection of groups $\phi \colon K[\overline{m(U)}]^* / K^* \subset K[U]^* / K^*$. Since $K[\overline{m(U)}]$ is generated by the $y_i$, we conclude that $m^*(K[\overline{m(U)}])$ is contained in the subalgebra $K[z_1^{\pm}, \dots, z_s^{\pm}] \subset K[U]$. Pick $f \in K[\overline{m(U)}]^* / K^*$. Since $U$ is embedded in its intrinsic torus,
we have $\phi(f) = z_1^{d_1} \cdots z_s^{d_s}$ for some $d_i \in \Z$. So $\phi(y_1^{d_1} \cdots y_s^{d_s}) = \phi(f)$ and since $\phi$ is injective, we conclude that $f = y_1^{d_1} \cdots y_s^{d_s}$.
\end{proof}

The embedding of the Segre cubic $\mathcal{S}$ into 
the $9$-dimensional toric variety given by (\ref{eq:segremap})
satisfies the hypotheses of Lemma \ref{lem:intrinsictorusimage}.
Indeed, $\mathcal{S}^{\circ}$
 is the image of the complement of a hyperplane arrangement
under a monomial map, and, by \cite[\S 4]{Tev},
the intrinsic torus of an essential arrangement of 
$n$ hyperplanes in $\PP^r$ is $\mathbb{G}_m^{n-1}$.
The same argument works for all 
moduli spaces studied  in this paper.
 That the ambient torus $\mathbb{G}_m^9$ is intrinsic for
 the open Segre cubic  $\mathcal{S}^{\circ} $
can also be seen from the fact that the
 $15$ boundary divisors $\mathcal{S} \cap \{m_i=0\}$ are irreducible.
Indeed, by \cite[\S 3.2.1]{hunt},
they are projective planes $\mathbb{P}^2$.
Each of the ten singular points of $\mathcal{S}$ lies
on six of these planes,
so each boundary plane contains four singular points.

From the combinatorial description above we infer the following summary of the situation.

\begin{corollary}
The tropical compactification of the open Segre cubic $\mathcal{S}^{\circ}$,
and hence of the other
moduli spaces in \eqref{eq:SIMM}, is the
Deligne--Mumford compactification  $\overline{\mathcal{M}}_{0,6}$. 
This threefold is the blow-up of the $10$ singular points of $\mathcal{S}$, or of the $15$ singular lines of the Igusa quartic~$\mathcal{I}$.
\end{corollary}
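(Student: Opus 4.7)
The plan is to combine three ingredients. First, by Proposition~\ref{prop:identification} we have $\mathcal{S}^{\circ} = \mathcal{M}_{0,6}$, and Theorem~\ref{thm:tropsegrecubic} identifies ${\rm trop}(\mathcal{S})$, with its coarsest fan structure, with the tree space ${\rm trop}(\mathcal{M}_{0,6})$ of metric trees on six unrooted taxa. Second, we invoke Tevelev's theorem \cite{Tev}, which (applied to $\mathcal{M}_{0,n}$ in the spirit of Gibney--Maclagan) states that the tropical compactification of $\mathcal{M}_{0,n}$ with respect to this fan structure is the Deligne--Mumford compactification $\overline{\mathcal{M}}_{0,n}$. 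Specialising to $n=6$ gives the identification $\overline{\mathcal{S}} = \overline{\mathcal{M}}_{0,6}$, and Proposition~\ref{prop:identification} transports the conclusion to the other varieties in \eqref{eq:SIMM}.

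For the blow-up descriptions we use the classical geometry collected in \cite[\S 3.3]{hunt}. The Segre cubic $\mathcal{S}$ has exactly $10$ nodes, one per triple $\{i,j,k\} \subset \{1,\ldots,6\}$ modulo complementation, and its projective dual $\mathcal{I}$ has exactly $15$ singular lines, one per pair $\{i,j\}$. Theorem~\ref{thm:tropsegrecubic} exhibits $25 = 15 + 10$ rays in ${\rm trop}(\mathcal{S})$, which match up as expected: the $15$ rays $E_{\{i,j\}}$ correspond to the $15$ boundary planes of $\mathcal{S}$ (equivalently, to the $15$ singular lines of $\mathcal{I}$), while the $10$ rays $E_{\{i,j,k\}}$, identified with $E_{\{i,j,k\}^c}$, correspond to the $10$ nodes of $\mathcal{S}$ (equivalently, to the $10$ coordinate hyperplanes of $\mathcal{I}$). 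The nested-or-disjoint combinatorics of higher cones in the tree space reproduces the incidence pattern of these $25$ divisors on the threefold obtained from $\mathcal{S}$ by blowing up its $10$ nodes, so that resolution equals $\overline{\mathcal{M}}_{0,6}$. The statement for $\mathcal{I}$ then follows by projective duality, which exchanges the roles of the two sets of rays and turns ``blow up the $10$ nodes of $\mathcal{S}$'' into ``blow up the $15$ singular lines of $\mathcal{I}$''.

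The one step requiring care is the appeal to Tevelev's theorem: one must verify that the fan structure on ${\rm trop}(\mathcal{S})$ from Theorem~\ref{thm:tropsegrecubic} is a tropical fan in the sense of \cite{Tev}, and that the associated toric variety realises $\mathcal{S}^{\circ}$ as a sch\"{o}n subvariety. This is ultimately a consequence of the matroidal presentation of $\mathcal{S}^{\circ}$ as the image of the complement of the $\mathrm{A}_5$ hyperplane arrangement in $\PP^4$ under a monomial map, combined with \cite[Theorem 3.1]{DFS} and the schonness of hyperplane arrangement complements discussed in \cite[\S 4]{Tev}. Once this is granted, the identification with $\overline{\mathcal{M}}_{0,n}$ for $n=6$ is a direct translation, and the geometric description as an iterated blow-up drops out of the ray/cone dictionary above.
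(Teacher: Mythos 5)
Your argument is correct and follows essentially the same route as the paper: the paper likewise deduces the first sentence by citing the known identification of the tropical compactification of $\mathcal{M}_{0,6}$ with $\overline{\mathcal{M}}_{0,6}$ (it uses \cite[Theorem 1.11]{HKT} where you invoke \cite{Tev}/Gibney--Maclagan, which is an equally valid reference for the same fact), after having established via Lemma~\ref{lem:intrinsictorusimage} that the ambient torus is intrinsic so the compactification is canonical. The second sentence is quoted directly from \cite[Theorem 3.3.11]{hunt} in the paper, which is exactly the classical input you reassemble with your ray/divisor dictionary and projective duality.
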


The second sentence is Theorem 3.3.11 in Hunt's book \cite{hunt}.
The first is a special case of \cite[Theorem 1.11]{HKT}.
Our rationale for giving a detailed equational derivation of the familiar manifold $\overline{\mathcal{M}}_{0,6}$ is that it sets the stage for our primary example in the next section.

\section{Burkhardt Quartic and Abelian Surfaces}
\label{sec:burkhardt}

The Burkhardt quartic is a rational quartic threefold in $\PP^4$. It can be characterized as the unique quartic hypersurface in $\PP^4$ with  the maximal number $45$ of nodal singular points 
\cite{DJSV}.
It compactifies the moduli space $\mathcal{M}_2(3)$
of genus 2 curves with level 3 structure \cite{DL,FSM,GS,hunt}. We identify $\mathcal{M}_2(3)$ with a subvariety of $\mathcal{A}_2(3)$, the moduli space of principally polarized abelian surfaces with level 3 structure, by sending a 
smooth curve to its Jacobian.

All constructions in this section can be
carried out over any field $K$
of characteristic other than $2$ or $3$, provided
$K$ contains a primitive third root of unity $\omega$.
In the tropical context,  $K$ will be a field with a valuation.
For details on arithmetic issues see Elkies' paper \cite{Elk}.

We realize the Burkhardt quartic
as the image of a rational map that is given as a composition
$\,  \PP^3 \,\, \buildrel{{\rm linear}}\over{\hookrightarrow}\,\, \PP^{39}\,\,
\buildrel{{\rm monomial}} \over{\dashrightarrow} \,\, \PP^{39}$.
We choose coordinates
$(c_0:c_1:c_2:c_3)$ on $\PP^3$
and coordinates $(m_0:m_1:\cdots :m_{39})$ on the rightmost $\PP^{39}$.
The $40$ homogeneous coordinates
$u_{ijk\ell}$ on the middle $\PP^{39}$
are indexed by the lines through the origin in the finite vector space
$\mathbb{F}_3^4$. Each line is given
by the vector whose leftmost nonzero coordinate is $1$.
The linear map $\PP^3 \hookrightarrow \PP^{39}$ is defined as follows,
where $\omega = \frac{1}{2} (-1 + \sqrt{-3})$ is a  third root of unity:
$$
\begin{matrix}
u_{0001} = c_1 {+} c_2 {+} c_3 & 
u_{0010} = c_2 {-} c_3 {+} c_0 & 
u_{0011} = c_3 {+} c_0 {-} c_1 & 
u_{0012} = c_0 {+} c_1 {-} c_2 \\
u_{0100} = \sqrt{-3} \cdot c_1 & 
u_{0101} = c_1 {+} \omega^2 c_2 {+} \omega^2 c_3  & 
u_{0102} = c_1 {+} \omega c_2 {+} \omega c_3  &
u_{0110} = c_2 {-} \omega c_3 {+} \omega^2 c_0  \\
u_{0111} = c_3 {+} c_0 {-} \omega c_1  & 
u_{0112} = c_0 {+} \omega^2 c_1 {-} c_2 & 
u_{0120} = c_2 {-} \omega^2 c_3 {+} \omega c_0 & 
u_{0121} = c_0 {+} \omega c_1 {-} c_2 \\
u_{0122} = c_3 {+} c_0 {-} \omega^2 c_1  & 
u_{1000} = \sqrt{-3} \cdot c_0 & 
u_{1001} = c_1 {+} \omega c_2 {+} \omega^2 c_3  & 
u_{1002} = c_1 {+} \omega^2 c_2 {+} \omega c_3 \\
u_{1010} = c_2 {-} c_3 {+} \omega c_0 & 
u_{1011} = c_3 {+} \omega c_0  {-} c_1 & 
u_{1012} = c_0 {+} \omega^2 c_1 {-} \omega^2  c_2  & 
u_{1020} = c_2 {-} c_3 {+} \omega^2  c_0  \\
u_{1021} = c_0 {+} c_1 \omega {-} \omega c_2  & 
u_{1022} = c_3 {+}  \omega^2 c_0 {-} c_1 & 
u_{1100} = \sqrt{-3} \cdot c_3 & 
u_{1101} = c_1 {+} c_2 {+} \omega c_3  \\
u_{1102} = c_1 {+} c_2 {+} c_3 \omega^2 & 
u_{1110} = c_2 {-} \omega c_3 {+} c_0 & 
u_{1111} = c_3 {+} \omega c_0 {-} \omega c_1 & 
u_{1112} = c_0 {+} \omega c_1 {-} \omega^2 c_2 \\
u_{1120} = c_2 {-} \omega^2 c_3 {+} c_0 & 
u_{1121} = c_0 {+} \omega^2 c_1 {-} \omega c_2 & 
u_{1122} = c_3 {+} c_0 \omega^2 {-} \omega^2 c_1 & 
u_{1200} = \sqrt{-3} \cdot c_2 \\
u_{1201} = c_1 {+} \omega^2 c_2 {+} c_3 & 
u_{1202} = c_1 {+} \omega c_2 {+} c_3 & 
u_{1210} = c_2 {-} \omega^2 c_3 {+} \omega^2 c_0 & 
u_{1211} = c_3 {+} \omega c_0 {-} \omega^2  c_1 \\
u_{1212} = c_0 {+} c_1 {-} \omega^2 c_2 & 
u_{1220} = c_2 {-} \omega c_3 {+} \omega c_0 & 
u_{1221} = c_0 {+} c_1 {-} \omega c_2 & 
u_{1222} = c_3 {+} \omega^2 c_0 {-} \omega c_1
\end{matrix}
$$
These $40$ linear forms cut out the hyperplanes
of the complex reflection arrangement $\mathrm{G}_{32}$.
We refer to the book by Hunt \cite[\S 5]{hunt}
for a discussion of this arrangement
and its importance for  modular Siegel threefolds.
Our first map $\PP^3 \hookrightarrow \PP^{39}$ realizes the arrangement $\mathrm{G}_{32}$
as the restriction of the $40$ coordinate planes in
$\PP^{39}$ to a certain  $3$-dimensional linear subspace.

The monomial
 map $\PP^{39} \dashrightarrow \PP^{39}$
is defined outside the hyperplane arrangement 
$ \{\prod u_{ijk\ell} = 0\}$ which corresponds to $\mathrm{G}_{32}$.
It is given  by the following $40$ monomials of degree four:
$$
\begin{matrix}
m_0 = u_{0001} u_{0010} u_{0011} u_{0012} & & 
m_1 = u_{0001} u_{1000} u_{1001} u_{1002} & & 
m_2 = u_{0001} u_{1010} u_{1011} u_{1012}   \\
m_3 = u_{0001} u_{1020} u_{1021} u_{1022} & & 
m_4 = u_{0010} u_{0100} u_{0110} u_{0120} & & 
m_5 = u_{0010} u_{0101} u_{0111} u_{0121}   \\
m_6 = u_{0010} u_{0102} u_{0112} u_{0122} & & 
m_7 = u_{0011} u_{1200} u_{1211} u_{1222} & & 
m_8 = u_{0011} u_{1201} u_{1212} u_{1220}   \\
m_9 = u_{0011} u_{1202} u_{1210} u_{1221} & & 
m_{10} = u_{0012} u_{1100} u_{1112} u_{1121} & &  
m_{11} = u_{0012} u_{1101} u_{1110} u_{1122}  \\
m_{12} = u_{0012} u_{1102} u_{1111} u_{1120} & & 
m_{13} = u_{0100} u_{1000} u_{1100} u_{1200} & &  
m_{14} = u_{0100} u_{1010} u_{1110} u_{1210}  \\
m_{15} = u_{0100} u_{1020} u_{1120} u_{1220} & & 
m_{16} = u_{0101} u_{1000} u_{1101} u_{1202} & &  
m_{17} = u_{0101} u_{1010} u_{1111} u_{1212}  \\
m_{18} = u_{0101} u_{1020} u_{1121} u_{1222} & & 
m_{19} = u_{0102} u_{1000} u_{1102} u_{1201} & &  
m_{20} = u_{0102} u_{1010} u_{1112} u_{1211}  \\
m_{21} = u_{0102} u_{1020} u_{1122} u_{1221} & & 
m_{22} = u_{0110} u_{1001} u_{1111} u_{1221} & &  
m_{23} = u_{0110} u_{1011} u_{1121} u_{1201}  \\
m_{24} = u_{0110} u_{1021} u_{1101} u_{1211} & & 
m_{25} = u_{0111} u_{1001} u_{1112} u_{1220} & &  
m_{26} = u_{0111} u_{1011} u_{1122} u_{1200}  \\
m_{27} = u_{0111} u_{1021} u_{1102} u_{1210} & & 
m_{28} = u_{0112} u_{1001} u_{1110} u_{1222} & &  
m_{29} = u_{0112} u_{1011} u_{1120} u_{1202}  \\
\end{matrix} $$  $$ \begin{matrix}
m_{30} = u_{0112} u_{1021} u_{1100} u_{1212} & & 
m_{31} = u_{0120} u_{1002} u_{1122} u_{1212} & &  
m_{32} = u_{0120} u_{1012} u_{1102} u_{1222}  \\
m_{33} = u_{0120} u_{1022} u_{1112} u_{1202} & & 
m_{34} = u_{0121} u_{1002} u_{1120} u_{1211} & & 
m_{35} = u_{0121} u_{1012} u_{1100} u_{1221}  \\
m_{36} = u_{0121} u_{1022} u_{1110} u_{1201} & & 
m_{37} = u_{0122} u_{1002} u_{1121} u_{1210} & & 
m_{38} = u_{0122} u_{1012} u_{1101} u_{1220}  \\
m_{39} = u_{0122} u_{1022} u_{1111} u_{1200} .
\end{matrix}
$$
The combinatorics behind this list is as follows.
The $40$ monomials represent the $40$ isotropic
planes  in the space $\mathbb{F}_3^4$, with respect to the
symplectic inner product  (\ref{eq:innerproduct}).
The linear inclusion $\PP^3 \hookrightarrow \PP^{39}$
has the property that two linearly independent vectors $x,y$ in $\mathbb{F}_3^4$
satisfy $\langle x,y \rangle = 0$ if and only if
the corresponding linear forms $u_x$ and $u_y$
are perpendicular in the root system $\mathrm{G}_{32}$,
using the usual Hermitian inner product 
(when considered over $\mathbb{C}$).

Let $\mathcal{B}$ denote the Burkhardt quartic in $\PP^{39}$,
that is, the closure of the image of the map  above.
Its homogeneous prime ideal $I_{\mathcal{B}}$
is minimally generated by one quartic and
a $35$-dimensional space of linear forms in
$K[m_0,m_1,\ldots,m_{39}]$.
That space has a natural generating set consisting of $160 = 4 \cdot 40$ linear trinomials.
Namely, the four coordinates  $m_\bullet $ that share a common parameter $u_{ijk\ell}$
span a two-dimensional space modulo $I_\mathcal{B}$. For instance,
the first four coordinates share the parameter $u_{0001}$,
and they satisfy the following linear trinomials:
\begin{equation}
\label{eq:linrel}
\begin{matrix}
 m_0+\omega^2 m_1-\omega m_2 &= &
 m_0-\omega m_1-\omega^2 m_3 &= & & & \\
 m_0+\omega^2 m_2+\omega m_3 &=& 
 m_1+\omega m_2 -\omega^2 m_3 &=& 0 .
 \end{matrix} 
 \end{equation}
 These relations are constructed as follows:
 Each of the $40$ roots $u_{ijk\ell}$ appears as a factor in
 precisely four  of the coordinates $ m_\bullet$, 
 and these four span a two-dimensional space over $K$.

The $160$ linear trinomials (\ref{eq:linrel}) cut out a $4$-dimensional linear subspace of $\mathbb{P}^{39}$. We fix the following system of coordinates, analogous to (\ref{eq:from_m_to_rst}), on that linear subspace $\mathbb{P}^4$ of $\mathbb{P}^{39}$:
\begin{equation}
\label{eq:burkpara}
\begin{matrix}
r & = & 3 c_0 c_1 c_2 c_3 & = & m_{13}/3 \\
s_{01} & = & -c_0(c_1^3+c_2^3+c_3^3) & = & (\sqrt{-3} \cdot m_1 - m_{13})/3 \\
s_{10} & = & \,\,c_1(c_0^3+c_2^3-c_3^3)  & = & (-\sqrt{-3}  \cdot m_4 - m_{13})/3 \\
 s_{11} & = &\,\, c_2(c_0^3-c_1^3+c_3^3) & = & (-\sqrt{-3}\cdot m_7 - m_{13})/3 \\
s_{12} & = &  \,\,c_3(c_0^3+c_1^3-c_2^3) & =  & (-\sqrt{-3} \cdot m_{10} - m_{13})/3 
\end{matrix}
\end{equation}
The polynomial that defines the Burkhardt quartic $\mathcal{B} \subset
 \mathbb{P}^4$ is now written as
 \begin{equation}
\label{eq:burkhardtclassic}
r (r^3+s_{01}^3+s_{10}^3+s_{11}^3+s_{12}^3)\,+\,3 s_{01} s_{10} s_{11} s_{12}
\,\,\, = \,\,\, 0.
\end{equation}
The Burkhardt quartic has $45$ isolated singular points.
For example, one of the singular points is $
(r: s_{01}:s_{10}:s_{11}:s_{12}) = 
(0:0:0:1:1)$. In the $m$-coordinates, this point is
\begin{equation}
\label{eq:singpoint}
\begin{matrix}
(0 : 0 : 0 : 0 : 0 : 0 : 0 : -\omega^2 : -\omega : 1 : \omega^2 : -1 : \omega : 0 : 0 : 0 : 0 : 0 :\\
 0 : 0 : 0 : 0 :
 -\omega^2 :
-1 : -\omega : -1 : -\omega^2 : \omega^2 : -\omega : \omega^2 : 
\\
\omega^2 : \omega^2 : 1 : \omega : 1 : \omega^2 : -\omega^2 : \omega : -\omega^2 : -\omega^2 )
\end{matrix}
\end{equation}
 For each  singular point 
 precisely $16$ of the $40$ $m$-coordinates are zero. 
Each hyperplane $m_\bullet = 0$ intersects the Burkhardt quartic $\mathcal{B}$
in a tetrahedron of four planes, known as {\em Jacobi planes}, 
which contains  $18$ of the $45$ singular points, in a configuration
that is depicted in  \cite[Figure 5.3(b)]{hunt}.
The relevant combinatorics will be explained when tropicalizing in Section~\ref{sec:tropicalization}.

The closure of the image of the monomial map
$\,\PP^{39} \dashrightarrow \PP^{39}, \, u \mapsto m \,$
is a toric variety  $\mathcal{T}$. Writing
$\PP^4$ for the linear subspace
defined by the $160$ trinomials like (\ref{eq:linrel}), we have
\begin{equation}
\label{eq:burkinters}
\mathcal{B} \,\,\, = \,\,\, \mathcal{T} \,\cap \,\PP^4 \quad \subset \quad \PP^{39}. 
\end{equation}
Thus we have realized the Burkhardt quartic as a linear section 
of the toric variety $\mathcal{T}$, and it makes sense to
explore the combinatorial properties of $\mathcal{T}$.
Let $A$ denote the $40 \times 40$ matrix
representing our monomial map $u\mapsto m$.
The columns of $A$ are indexed by the $u_{ijk\ell}$,
and hence by the lines in $\mathbb{F}_3^4$.
The rows of $A$ are indexed by the $m_\bullet$,
and hence by the isotropic planes in $\mathbb{F}_3^4$.
The matrix $A$ is the $0$-$1$ matrix that encodes  incidences
of lines and isotropic planes. Each row and each column
has exactly four entries $1$, and the other entries are $0$.
The matrix $A$ has rank $25$, and we computed its
{\em Markov basis} using the software {\tt 4ti2} \cite{4ti2}.

\begin{proposition} 
\label{prop:toricvarT}
\begin{compactenum}[\rm (a)]
\item The projective toric variety $\mathcal{T}$ has dimension $24$.
\item Its prime ideal is minimally generated by $5136$ binomials, namely
$ 216$ binomials of degree $5$, 
$270$ of degree $6$, 
$4410$ of degree $8$, 
and $240$ of degree $12$.  
\item The Burkhardt quartic is the scheme-theoretic intersection in (\ref{eq:burkinters}).
This intersection is not ideal-theoretic, since there is no quartic relation on $\mathcal{T}$ that 
could specialize to  (\ref{eq:burkhardtclassic}).
\item The $24$-dimensional polytope of $\mathcal{T}$, which
is the convex hull of the $40$ rows of $A$, has
precisely $13144$ facets.
\end{compactenum}
\end{proposition}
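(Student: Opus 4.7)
All four parts are essentially computational, and the plan is to extract them from the explicit $40 \times 40$ incidence matrix $A$ using the packages \texttt{4ti2}, \texttt{Macaulay2}, and \texttt{polymake}.

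Part (a) is immediate: the dimension of the image of a monomial parametrization equals the rank of the exponent matrix, minus one for the projective quotient. Since $A$ has rank $25$, we obtain $\dim \mathcal{T} = 24$. The rank itself can be verified by observing that the lattice vectors underlying the $160$ linear trinomials \eqref{eq:linrel} span a $15$-dimensional sublattice of $\ker A$, and checking that no further relations exist.

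For (b), the toric ideal $I_{\mathcal{T}}$ is generated by the binomials $m^{\alpha^+} - m^{\alpha^-}$ with $\alpha \in \ker_{\mathbb{Z}}(A)$. A Markov basis obtained from \texttt{4ti2} yields a finite generating set, and reduction to a minimal generating set via a Gr\"obner basis computation in \texttt{Macaulay2} produces the claimed distribution $(216, 270, 4410, 240)$. I expect this to be the main obstacle: since $\ker_{\mathbb{Z}}(A)$ has rank $15$ in $\mathbb{Z}^{40}$, the Markov basis is large, and verifying that the degree-$12$ binomials cannot be reduced modulo lower-degree generators requires real care. The action of $\mathbf{Sp}_4(\mathbb{F}_3)$ on $A$ organizes the generators into a handful of orbits, so one can check minimality orbit-by-orbit rather than binomial-by-binomial.

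For (c), I would establish the scheme-theoretic equality $\mathcal{B} = \mathcal{T} \cap \PP^4$ by a Hilbert-series comparison: compute the saturation of $I_{\mathcal{T}} + I_{\mathrm{lin}}$ in \texttt{Macaulay2} and match its Hilbert series with that of $I_{\mathcal{B}}$. Since $\mathcal{B}$ is reduced and the inclusion $\mathcal{B} \subseteq V(I_{\mathcal{T}} + I_{\mathrm{lin}})$ is automatic from the factorization of the parametrization through $\PP^4 \hookrightarrow \PP^{39}$, equality of Hilbert series forces scheme-theoretic equality. The failure of ideal-theoretic equality follows immediately from (b): $I_{\mathcal{T}}$ has no elements in degree $\leq 4$, so $(I_{\mathcal{T}} + I_{\mathrm{lin}})_4 = (I_{\mathrm{lin}})_4$, and every polynomial in $I_{\mathrm{lin}}$ restricts to zero on $\PP^4 \hookrightarrow \PP^{39}$. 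The Burkhardt quartic \eqref{eq:burkhardtclassic} restricts non-trivially, so its lift to $\PP^{39}$ is not in $I_{\mathcal{T}} + I_{\mathrm{lin}}$.

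Finally, for (d), I would feed the $40$ rows of $A$, viewed as lattice points in $\mathbb{R}^{40}$, to \texttt{polymake} and read off the $f$-vector of their convex hull; the facet count $13144$ appears as the top entry. The polytope lies in the affine hyperplane of constant column sum $4$, so one effectively computes in dimension $24$, making the facet enumeration tractable.
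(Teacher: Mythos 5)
Your overall strategy coincides with the paper's: (a) is exactly the rank computation $\operatorname{rank}(A)=25$, (b) is the \texttt{4ti2} Markov-basis computation, and (d) is the \texttt{polymake} facet enumeration. The one place where you genuinely diverge is the scheme-theoretic equality in (c). You propose saturating $I_{\mathcal{T}}+I_{\mathrm{lin}}$ and comparing Hilbert series with $I_{\mathcal{B}}$; this is logically sound (since $I_{\mathcal{T}}+I_{\mathrm{lin}}\subseteq I_{\mathcal{B}}$ and $I_{\mathcal{B}}$ is saturated, equal Hilbert functions force equality), but it asks the computer to saturate a large ideal. The paper instead exhibits an explicit certificate: five of the $216$ quintic binomials in $I_{\mathcal{T}}$ whose restrictions to the $\PP^4$ cut out by \eqref{eq:linrel} each factor as the Burkhardt quartic \eqref{eq:burkhardtclassic} times a linear form, and the five linear forms generate the irrelevant ideal $\langle r,s_{01},s_{10},s_{11},s_{12}\rangle$. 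This puts the quartic directly into the saturation of the restricted ideal with no further computation, and is both cheaper and more illuminating. Your argument that the intersection is not ideal-theoretic --- $(I_{\mathcal{T}})_{\le 4}=0$ by (b), so every quartic in $I_{\mathcal{T}}+I_{\mathrm{lin}}$ lies in $(I_{\mathrm{lin}})_4$ and restricts to zero on $\PP^4$ --- is exactly the paper's reasoning, correctly spelled out.

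One aside in your part (a) is wrong and should be deleted: the $160$ linear trinomials \eqref{eq:linrel} are linear forms in the $m$-coordinates with coefficients in $\mathbb{Z}[\omega]$ that vanish on $\mathcal{B}$ (they span the $35$-dimensional space cutting out the $\PP^4$ of \eqref{eq:burkinters}); they are not binomials, have no ``underlying lattice vectors,'' and have nothing to do with $\ker_{\mathbb{Z}}(A)$, which is the rank-$15$ lattice of integer relations among the rows of $A$ that produces the toric ideal $I_{\mathcal{T}}$. Conflating the two would lead you to ``verify'' the rank of $A$ from data that lives on the wrong variety. The rank is simply computed directly from the $0$-$1$ incidence matrix.
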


\begin{proof}
(a) follows from the fact that ${\rm rank}(A)=25$. The statements in (b) and (c) follow from our {\tt 4ti2} calculation.
 The facets in (d) were computed using the software {\tt polymake} \cite{GJ}.
The scheme-theoretic intersection in (c) can be verified by taking the following five
among the $216$ quintic binomials that vanish on $\mathcal{T}$:
\[
\begin{matrix}
m_0 m_{13} m_{22} m_{33} m_{37} - m_1 m_4 m_9 m_{10} m_{39} & & 
m_0 m_{14} m_{23} m_{33} m_{35} - m_2 m_4 m_9 m_{10} m_{36} \\
m_0 m_{16} m_{25} m_{35} m_{37} - m_1 m_5 m_9 m_{10} m_{38} & &
m_0 m_{17} m_{26} m_{36} m_{38} - m_2 m_5 m_8 m_{11} m_{39} \\
m_9 m_{11} m_{13} m_{18} m_{20} - m_7 m_{10} m_{14} m_{16} m_{21} &  &
\end{matrix}
\]
Each of these quintic binomials factors on $\PP^4$
as the Burkhardt quartic (\ref{eq:burkinters}) times a linear form, and
these five linear forms
generate the irrelevant maximal ideal
$\langle r ,s_{01},s_{10}, s_{11}, s_{12}  \rangle $.
\end{proof}

We next explain the connection to abelian surfaces. Consider
the {\em open Burkhardt quartic} 
\[
\mathcal{B}^{\circ} = \mathcal{B} \setminus \{\prod m_i = 0\} \subset \PP^{39}.
\]
 In its modular interpretation (\cite{FSM}, \cite[\S 3.1]{GS}, \cite[Lemma 5.7.1]{hunt}), this threefold  is  the moduli space
 $\mathcal{M}_2(3)$ of smooth genus $2$ curves with level $3$ structure.
   With every point $(r:s_{01}:s_{10}:s_{11}:s_{12}) \in \mathcal{B}^{\circ}$
 we associate an abelian surface (which is a Jacobian) following \cite[\S 3.2]{GS}.
  The ambient space for this family of abelian surfaces is
  the projective space $\PP^8$ whose coordinates
\[
(x_{00}:x_{01}: x_{02}: x_{10}:x_{11}: x_{12}: x_{20}:x_{21}:x_{22})
\]
are indexed by $\mathbb{F}_3^2$.
The following five polynomials represent all the
affine subspaces of $\mathbb{F}_3^2$:
\begin{align*}
f &\,=\, x_{00}^3+x_{01}^3+x_{02}^3+x_{10}^3+x_{11}^3+x_{12}^3+x_{20}^3+x_{21}^3+ x_{22}^3,\\
g_{01} &\,=\, 3(x_{00}x_{01}x_{02}+x_{10}x_{11}x_{12}+x_{20}x_{21}x_{22}),\\
g_{10} &\,=\, 3(x_{00}x_{10}x_{20}+x_{01}x_{11}x_{21}+x_{02}x_{12}x_{22}),\\
g_{11} &\,=\, 3(x_{00}x_{11}x_{22}+x_{01}x_{12}x_{20}+x_{10}x_{21}x_{02}),\\
g_{12} &\,=\, 3(x_{00}x_{12}x_{21}+x_{01}x_{10}x_{22}+x_{02}x_{11}x_{20}).
\end{align*}
Our abelian surface is the singular locus of the {\em Coble cubic}  $\{C = 0\}$ in $\PP^8$,
which is given~by
 \begin{equation*}
C\,\,=\,\,rf+s_{01}g_{01}+s_{10}g_{10}+s_{11}g_{11}+s_{12}g_{12}.
\end{equation*}

\begin{theorem} \label{thm:abelian93}
The singular locus of the Coble cubic
of any point in $\mathcal{B}^{\circ}$ is an abelian surface $S$
of degree $18$ in $\PP^8$.
This equips $S$ with an indecomposable polarization of type $(3,3)$. 
The prime ideal of $S$ is minimally generated by
$9$ quadrics and $3$ cubics.
The theta divisor on $S$ is a
tricanonical curve of genus $2$, and this is obtained by
intersecting $S$ with the $\mathbb{P}^4$ defined~by
\begin{equation}
\label{eq:ranktrica}
{\rm rank}
\begin{pmatrix}
x_{00} & x_{01} {+} x_{02}  &  x_{10} {+} x_{20} 
& x_{11} {+} x_{22} &  x_{12} {+} x_{21} \\
r & s_{01} & s_{10} & s_{11} & s_{12}
\end{pmatrix}
\,\,\, \leq \,\,\, 1.
\end{equation}
\end{theorem}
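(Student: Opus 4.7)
The plan is to match our Coble cubic $C$ to the object studied in \cite{GS} and then apply their analysis, supplementing with an explicit check for the tricanonical claim.

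\textbf{Identification with the Coble family.} The five cubics $f, g_{01}, g_{10}, g_{11}, g_{12}$ span the space of cubics on $\PP^8$ invariant under the finite Heisenberg group acting in the level-$3$ Schr\"odinger representation $V = K^9$. Viewing $(r:s_{01}:s_{10}:s_{11}:s_{12})$ as dual coordinates, the polynomial $C$ is the universal Heisenberg-invariant cubic in this pencil, and $\mathcal{B}^\circ$ is precisely the open locus where $\mathrm{Sing}(C)$ is two-dimensional. With this match in place, the first three assertions (smoothness of $S$, degree $18$, indecomposable $(3,3)$-polarization, and the $9+3$ ideal generators) follow from the Coble-type analysis in \cite[\S 3]{GS}. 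The numerics are as expected: indecomposability is forced by Heisenberg-irreducibility of $V$, since a decomposable polarization would split $V$ nontrivially as a tensor product; the degree is $L^2 = 2\cdot 3\cdot 3 = 18$; and Riemann--Roch gives $h^0(\mathcal{O}_S(k)) = 9k^2$, forcing $\dim I_S(2) = 9$ (accounted for by the nine partials $\partial C/\partial x_{ij}$, which are linearly independent by this dimension count) and exactly $3$ further cubic generators to realize $\dim I_S(3) = 84$.

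\textbf{The tricanonical claim.} The inversion $[-1]\colon S\to S$ extends to $\PP^8$ by $x_{ij} \mapsto x_{-i,-j}$. Since $(-i,-j)=(i,j)$ in $\F_3$ only for $(i,j)=(0,0)$, the $+1$-eigenspace is the $5$-dimensional span of $x_{00}$, $x_{01}+x_{02}$, $x_{10}+x_{20}$, $x_{11}+x_{22}$, $x_{12}+x_{21}$. The rank condition (\ref{eq:ranktrica}) cuts out exactly the $\PP^4$ in $\PP^8$ where these five combinations become proportional to the modular coordinates $(r, s_{01}, s_{10}, s_{11}, s_{12})$. The theta divisor $\Theta \subset S$ is the $[-1]$-fixed effective divisor in the polarization, so $\Theta$ lies in this $\PP^4$; adjunction on $S$ yields $K_\Theta = \Theta|_\Theta$ of degree $\Theta^2 = 2$, making $\Theta$ a smooth genus $2$ curve. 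The restriction of the $(3,3)$-polarization to $\Theta$ has degree $3\Theta\cdot\Theta = 6$, the tricanonical degree, so $S \cap \PP^4 = \Theta$ is the tricanonical image of the genus $2$ curve whose Jacobian is $S$.

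\textbf{Main obstacle.} The delicate point is the specific proportionality constants implicit in (\ref{eq:ranktrica}): it is not enough to identify the $\PP^4$ as the $[-1]$-fixed locus; one must also verify that the particular pairing with $(r, s_\bullet)$ matches the theta-null normalization built into the parametrization (\ref{eq:burkpara}). The cleanest route is to degenerate to a special fiber, for instance the Jacobian of a curve with extra automorphisms, verify with {\tt Macaulay2} that $S \cap \PP^4$ is a smooth sextic of genus $2$, and then use Heisenberg--$\mathrm{Sp}_4(\F_3)$-equivariance to extend the verification to all of $\mathcal{B}^\circ$.
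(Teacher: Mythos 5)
Your overall strategy---defer the hard facts to \cite{GS} and the literature---is essentially what the paper does: its proof consists of citing \cite[\S 10.7]{BL} for the singular locus of the Coble cubic being a $(3,3)$-polarized abelian surface, Gunji \cite[Theorem 8.3]{Gun} for the $9$ quadrics and $3$ cubics, and \cite[Theorem 3.14(d)]{GS} for the rank condition (\ref{eq:ranktrica}). However, the supplementary arguments you add contain genuine errors. Indecomposability is \emph{not} forced by Heisenberg-irreducibility: the Schr\"odinger representation of the level-$(3,3)$ Heisenberg group does decompose as a tensor product of two level-$3$ Schr\"odinger representations (this is exactly what happens for $E_1\times E_2$ embedded as a Segre product of two Hesse cubics in $\PP^2\times\PP^2\subset\PP^8$), so your argument would prove too much. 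The correct reason is that points of $\mathcal{B}^{\circ}=\mathcal{M}_2(3)$ are Jacobians of smooth genus $2$ curves, whose theta divisor is irreducible. Similarly, the linear independence of the nine partials $\partial C/\partial x_{ij}$ is not implied by the count $\dim I_S(2)=9$; that count only bounds the number of independent quadrics in $I_S$ from above.

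More seriously, the tricanonical step has a logical gap. The $\PP^4$ of (\ref{eq:ranktrica}) is not the fixed locus of $[-1]$ on $\PP^8$ (that locus is the disjoint union of the $+1$-eigenspace $\PP^4_+$ and the $-1$-eigenspace $\PP^3_-$); it is the join of $\PP^3_-$ with the single point of $\PP^4_+$ determined by $(r:s_{01}:s_{10}:s_{11}:s_{12})$. So ``$\Theta$ is $[-1]$-invariant, hence lies in this $\PP^4$'' does not follow. The argument you would need is that the $4$-dimensional space of sections of $\mathcal{O}_S(3\Theta)$ vanishing on $\Theta$ is $\theta\cdot H^0(\mathcal{O}_S(2\Theta))$, which consists entirely of even sections; hence the linear span of $\Theta$ is the common zero locus of four even sections, i.e.\ the join of $\PP^3_-$ with one point of $\PP^4_+$. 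Identifying that point with $(r,s_{\bullet})$ is precisely the content of (\ref{eq:ranktrica}); you leave this to an unexecuted computation, whereas the paper obtains it from \cite[Theorem 3.14(d)]{GS}. As written, the key normalization is asserted rather than proved.
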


\begin{proof}
The first statement is classical (see \cite[\S 10.7]{BL}). We shall explain it below using theta functions.
The fact about ideal generators is due
to Gunji \cite[Theorem 8.3]{Gun}.
The representation (\ref{eq:ranktrica})
of the curve whose Jacobian is $S$ is derived from
\cite[Theorem 3.14(d)]{GS}.
\end{proof}

We now discuss the complex analytic view of our story.
Recall (e.g.~from \cite[\S 8.1]{BL})
 that a principally polarized abelian surface 
over $\mathbb{C}$ is given analytically as
$S_\tau = \mathbb{C}^2/(\mathbb{Z}^2 + \tau \mathbb{Z}^2)$, 
where  $\tau$ is a complex symmetric
$2\times{}2$-matrix whose imaginary part is positive definite.  
The set of such matrices is the {\em Siegel upper half space} $\mathfrak{H}_2$. 
Fix the $4 \times 4$ matrix $J = \begin{bmatrix} 0 \! & \! -\mathrm{Id}_2 \\ \mathrm{Id}_2 \! &
 \! 0 \end{bmatrix}$.
Let $\mathrm{Sp}_4(\mathbb{Z})$ be the group of $4 \times 4$ integer-valued matrices $\gamma$ such that $\gamma J \gamma^T = J$. This acts on $\mathfrak{H}_2$ via
\begin{align} 
\label{eq:actiononh2}
\begin{bmatrix}
A & B\\
C & D
\end{bmatrix}
\cdot{}\tau{} \,\,\,= \,\,(A\tau{}+B)(C\tau{}+D)^{-1},
\end{align}
where $A,B,C,D$ are $2\times 2$ matrices, and this descends to  an action of $\mathrm{PSp}_4(\mathbb{Z})$ on $\mathfrak{H}_2$. The natural map $\mathrm{PSp}_4(\mathbb{Z}) \to \mathrm{PSp}_4(\mathbb{F}_3)$ 
takes the residue class modulo $3$ of each matrix entry. Let $\Gamma_2(3)$ denote the kernel of this map.
The action of $\mathrm{PSp}_4(\mathbb{Z})$ preserves the abelian surface, while $\Gamma_2(3)$ preserves the abelian surface together with a level $3$ structure.  Hence $\mathfrak{H}_2/\mathrm{PSp}_4(\mathbb{Z})$ is the moduli space 
$\mathcal{A}_2$ of principally polarized abelian surfaces, while $\mathfrak{H}_2/\Gamma_2(3)$ is the moduli space
$\mathcal{A}_2(3)$ of principally polarized abelian surfaces with level $3$ structure. 
The finite group $\mathrm{PSp}_4(\mathbb{F}_3)$ is a simple group of order $25920$ and it acts naturally on $\mathfrak{H}_2/\Gamma_2(3)$.

The {\it third-order theta function} with characteristic $\sigma
 \in \frac{1}{3} \mathbb{Z}^2/\mathbb{Z}^2$ is defined as
\begin{align*}
\Theta{}_3[\sigma{}](\tau{},z) &\,=\,\, \,\mathrm{exp}(3\pi{}i\sigma{}^T\tau{}\sigma{}+6\pi{}i\sigma{}^Tz)
\cdot \theta{}(3\tau{},3z+3\tau{}\sigma{}) \\*
&\,=\, \sum_{n\in{}\mathbb{Z}^2}\mathrm{exp}\bigl(3\pi{}i(n{+}\sigma{})^T\tau{}(n{+}\sigma{})
+6\pi{}i(n+\sigma{})^T z \bigr).
\end{align*}
Here $\theta{}$ is the classical Riemann theta function.
For a fixed matrix $\tau{} \in \mathfrak{H}_2$, the nine third-order theta functions on $\C^2$ give precisely our embedding of the abelian surface $S_\tau$ into~$\mathbb{P}^8$:
$$
S_\tau \hookrightarrow \mathbb{P}^8, \qquad
z \mapsto (\Theta{}_3[\sigma](\tau,z))_{\sigma{}\in \frac{1}{3} \Z^2/\Z^2}.
$$
Adopting the notation in \cite[\S 2]{RSSS}, for any $(j,k) \in \{0,1,2\}^2$, we abbreviate
$$
u_{jk} \,=\, \Theta{}_3[(\frac{j}{3},\frac{k}{3})](\tau{},0) \quad \hbox{and} \quad
x_{jk} \,=\, \Theta{}_3[(\frac{j}{3},\frac{k}{3})](\tau{},z).
$$
The nine {\em theta constants} $u_{jk}$ satisfy
$\,u_{01} = u_{02}, \,u_{10} = u_{20}, \, u_{11} = u_{22}$, and
$u_{12}= u_{21}$.
% because
% \begin{align*}
% \Theta{}_3[\sigma{}](\tau{},0) &\,=\, \mathrm{exp}(3\pi{}i\sigma{}^T\tau{}\sigma{})
% \cdot \theta{}(3\tau{},3\tau{}\sigma{})  \\
% &\,=\, \mathrm{exp}(3\pi{}i(-\sigma{})^T\tau{}(-\sigma{})) \cdot \theta{}(3\tau{},3\tau{}(-\sigma{}))
% \,\,=\,\, \Theta{}_3[-\sigma{}](\tau{},0) .
% \end{align*}
For that reason, we need only five theta constants $u_{00},u_{01},u_{10},u_{11},u_{12}$,
which we take as homogeneous coordinates on $\PP^4$. These five
coordinates satisfy one homogeneous equation:

\begin{lemma} \label{lem:Siegel}
The closure of the image of the map $\,\mathfrak{H}_2 \rightarrow \PP^4\,$ given by the
five theta constants is an irreducible hypersurface $\,\mathcal{H}$
of degree $10$. Its defining polynomial is the determinant of 
$$ U \, = \,
\begin{bmatrix}
u_{00}^2 & u_{01}^2      & u_{10}^2       & u_{11}^2        & u_{12}^2\\
u_{01}^2 & u_{00}u_{01} & u_{11}u_{12} & u_{10}u_{12} & u_{10}u_{11}\\
u_{10}^2 & u_{11}u_{12} & u_{00}u_{10} & u_{01}u_{12} & u_{01}u_{11}\\
u_{11}^2 & u_{10}u_{12} & u_{01}u_{12} & u_{00}u_{11} & u_{01}u_{10}\\
u_{12}^2 & u_{10}u_{11} & u_{01}u_{11} & u_{01}u_{10} & u_{00}u_{12}
\end{bmatrix}.
$$
\end{lemma}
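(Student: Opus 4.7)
The proof has three parts: showing that the image is a hypersurface, verifying that $\det(U)$ vanishes on it, and checking that $\det(U)$ is irreducible of degree $10$.

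\textbf{Step 1 (Dimension).} Since $\dim \mathfrak{H}_2 = 3$, the closure $\mathcal{H}$ has dimension at most $3$, and it sits inside $\PP^4$, so it has dimension at most $3$. To see that equality holds I would argue that the map $\tau \mapsto (u_{00}:u_{01}:u_{10}:u_{11}:u_{12})$ factors through $\mathcal{A}_2(3) = \mathfrak{H}_2/\Gamma_2(3)$, a $3$-dimensional variety, and that the ratios of these five theta constants separate a generic pair of points on $\mathcal{A}_2(3)$ (they define a projective embedding of its Satake-type compactification). One checks this concretely by computing the Jacobian of the map at a single generic $\tau$. Consequently $\mathcal{H}$ is an irreducible hypersurface in $\PP^4$.

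\textbf{Step 2 (Vanishing of $\det U$).} To show $\det(U)$ vanishes when evaluated on theta constants, I would invoke the classical quadratic Riemann relations applied to third-order theta constants. After imposing the symmetries $u_{01}=u_{02}$, $u_{10}=u_{20}$, $u_{11}=u_{22}$, $u_{12}=u_{21}$, the quadratic monomials in the five independent $u_{jk}$ span a prescribed subspace of modular forms of weight $1$ on $\Gamma_2(3)$, and this subspace can be shown to have dimension exactly $5$ using the representation theory of $\mathrm{Sp}_4(\mathbb{F}_3)$ developed in \cite{GS,GSW}. The five rows of $U$ encode five vectors of coefficients for such quadratic combinations; since the columns of $U$ must collectively satisfy one additional linear relation coming from a Riemann-type quartic identity, the matrix has a nontrivial kernel and hence $\det(U) = 0$. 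Concretely one exhibits a null vector of $U$ whose entries are themselves modular forms, as catalogued (in different notation) in Gunji \cite{Gun}.

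\textbf{Step 3 (Irreducibility and degree).} Expanding $\det(U)$ shows that it is homogeneous of degree $10$. For irreducibility, I would either perform a direct computer algebra check over $\mathbb{Q}(\omega)$ or use the following representation-theoretic shortcut: the Heisenberg extension of $(\mathbb{Z}/3)^2$ acts on $U$ by conjugation with monomial matrices, hence acts on $\det(U)$ by a character. Any proper factor would also have to be a semi-invariant, and a character-theoretic inspection of the graded pieces of the semi-invariant subring of degrees $1,\ldots,9$ rules out every such candidate. Combining the three steps, $\mathcal{H}$ and $\{\det(U)=0\}$ are both irreducible threefolds in $\PP^4$ with $\mathcal{H}\subset\{\det(U)=0\}$, so they coincide.

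\textbf{Main obstacle.} The crux of the argument is Step 2: producing and verifying the explicit identity $\det(U)\equiv 0$ on theta constants. Representation theory guarantees the existence of the required linear dependence, but organizing the Riemann relations so that they match precisely the block structure of $U$ is the delicate part, and it is here that I would lean most heavily on \cite{Gun, GS, GSW} rather than rediscover the identity from scratch.
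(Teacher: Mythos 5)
The paper does not actually prove this lemma: its entire ``proof'' is a pointer to the places in the literature where the determinant $\det(U)$ first appears, namely Dolgachev--Lehavi, Freitag--Salvati Manni, and Morikawa. So your attempt to reconstruct an argument is doing more work than the paper does, and your overall skeleton --- (i) the image is an irreducible threefold, (ii) $\det(U)$ vanishes on it, (iii) $\det(U)$ is a nonzero irreducible form of degree $10$, hence the two irreducible hypersurfaces coincide --- is exactly the right frame. Steps 1 and 3 are fine in outline: irreducibility of $\mathcal{H}$ is automatic since $\mathfrak{H}_2$ is connected, the dimension count via generic finiteness of the theta map is standard, and the degree and irreducibility of $\det(U)$ are finite verifications (your Heisenberg semi-invariance trick is a reasonable way to organize the latter).

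The genuine gap is in Step 2, and the mechanism you sketch there is not the right one. Saying that the $15$ quadratic monomials in the $u_{jk}$ span a $5$-dimensional space of weight-one forms, and that ``one additional linear relation'' among the columns forces a kernel, does not parse into a proof: the rows of $U$ are specific quadratic monomials, not coefficient vectors of relations, and a bound on the span of the entries does not by itself produce a vanishing determinant. The actual classical reason is a \emph{rank} statement. Writing the entry of $U$ in position $(a,b)$ as $u_{a+b}\,u_{a-b}$ for $a,b\in\mathbb{F}_3^2/\{\pm1\}$, the Riemann addition formula expresses each such product as a bilinear combination of the \emph{four} second-order theta constants of genus $2$; this exhibits $U$ as a product of a $5\times 4$ matrix with its transpose (up to a diagonal twist), so $\operatorname{rank}U\le 4$ identically on the theta locus, whence $\det U=0$. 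This is also why, immediately after the lemma, the paper can define the $10$-to-$1$ map $\mathcal{H}\dashrightarrow\mathcal{B}$ by the $4\times4$ minors of $U$: generically the rank is exactly $4$, and the adjugate data is what carries the modular information. You flagged Step 2 as the crux and proposed to outsource it to \cite{Gun,GS,GSW}; the honest citation would rather be to Morikawa and Freitag--Salvati Manni, where the rank-four factorization is the content being cited.
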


\begin{proof}
This determinant appears in \cite[(10)]{DL},  \cite[p.~252]{FSM}, and \cite[\S 2.2]{morikawa}.
\end{proof}

%\steven{Delete following sentence?} A remarkable fact, first noted by Coble, is that
%the Burkhardt quartic is self-Steinerian \cite[\S 5.3]{hunt}. In fact, the
%hypersurface in Lemma \ref{lem:Siegel} is the
%Hessian of the Burkhardt quartic:
%\begin{equation}
%\label{eq:HessBurk}
%  \mathcal{H} \,\, = \,\, {\rm Hess}(\mathcal{B}) .    \end{equation}
At this point, we have left the complex analytic world
and we are back over a more general field $K$.
 The natural map
$\mathcal{H} \dashrightarrow \mathcal{B}$ is
10-to-1 and it is given explicitly by $4 \times 4$-minors of $U$.

\begin{corollary}
\label{cor:coblecubic}
Over the Hessian $\mathcal{H}$ of the Burkhardt quartic, the Coble cubic is written as
\begin{equation}
\label{eq:CobleU}
C \,\,\, = \,\,\, {\rm det}
\begin{bmatrix}
f(\mathbf{x}) & g_{01}(\mathbf{x})      & g_{10}(\mathbf{x})       & g_{11}(\mathbf{x})     & g_{12}(\mathbf{x}) \\
u_{01}^2 & u_{00}u_{01} & u_{11}u_{12} & u_{10}u_{12} & u_{10}u_{11}\\
u_{10}^2 & u_{11}u_{12} & u_{00}u_{10} & u_{01}u_{12} & u_{01}u_{11}\\
u_{11}^2 & u_{10}u_{12} & u_{01}u_{12} & u_{00}u_{11} & u_{01}u_{10}\\
u_{12}^2 & u_{10}u_{11} & u_{01}u_{11} & u_{01}u_{10} & u_{00}u_{12}
\end{bmatrix}.
\end{equation}
For $K=\mathbb{C}$, this expresses 
$r,s_{01},s_{10},s_{11}, s_{12}$ 
as  modular forms in terms of theta constants.
\end{corollary}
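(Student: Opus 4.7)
The plan is to expand the $5 \times 5$ determinant in (\ref{eq:CobleU}) along its top row via Laplace expansion. This yields
$$
\det \,=\, f \cdot M_0 \,-\, g_{01} \cdot M_1 \,+\, g_{10} \cdot M_2 \,-\, g_{11} \cdot M_3 \,+\, g_{12} \cdot M_4,
$$
where $M_i$ denotes the $4 \times 4$ minor of $U$ obtained by deleting the top row and the $(i{+}1)$st column. The crucial observation is that these are precisely the $4 \times 4$ minors referenced in the sentence just before the corollary, which states that the rational map $\mathcal{H} \dashrightarrow \mathcal{B}$ is given by exactly this collection of minors. Under the conventions fixed in (\ref{eq:burkpara}) and Lemma \ref{lem:Siegel}, one checks that $(M_0, -M_1, M_2, -M_3, M_4)$ coincides with $(r, s_{01}, s_{10}, s_{11}, s_{12})$ as functions on $\mathcal{H}$. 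Substituting into the expansion then reproduces the defining formula $C = rf + s_{01} g_{01} + s_{10} g_{10} + s_{11} g_{11} + s_{12} g_{12}$, proving (\ref{eq:CobleU}).

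For the second assertion, specializing to $K = \C$ the entries of $U$ are degree-$2$ monomials in the nine theta constants $\Theta_3[\sigma](\tau,0)$, and after the relations $u_{01}=u_{02}$, $u_{10}=u_{20}$, $u_{11}=u_{22}$, $u_{12}=u_{21}$ these monomials descend to the five coordinates $u_{00},u_{01},u_{10},u_{11},u_{12}$. Hence the $4 \times 4$ minors $M_i$ are polynomials of degree $8$ in the theta constants and therefore Siegel modular forms for $\Gamma_2(3)$. Combined with the identifications above, this expresses $r, s_{01}, s_{10}, s_{11}, s_{12}$ explicitly as modular forms in the theta constants, as claimed.

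The only substantive work is bookkeeping: one must verify that the row/column labeling of $U$ (inherited from the theta-constant conventions of \cite{DL, FSM, morikawa}) matches the normalization of the Burkhardt coordinates $r, s_{ij}$ used in (\ref{eq:burkpara}) and in \cite[\S 3.2]{GS}, including the alternating signs produced by Laplace expansion. This is the main obstacle, but it is purely a matter of tracking characteristics $\sigma = (\tfrac{j}{3}, \tfrac{k}{3})$ and the natural $\mathrm{PSp}_4(\mathbb{F}_3)$-equivariance; once fixed, no further computation beyond the single cofactor expansion above is required.
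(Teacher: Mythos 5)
Your proposal is correct and matches the paper's (implicit) argument: the paper states the corollary without a separate proof, relying exactly on the preceding observation that the 10-to-1 map $\mathcal{H}\dashrightarrow\mathcal{B}$ is given by the $4\times 4$-minors of $U$, so that cofactor expansion of (\ref{eq:CobleU}) along its first row reproduces $C = rf + s_{01}g_{01}+s_{10}g_{10}+s_{11}g_{11}+s_{12}g_{12}$ up to the irrelevant overall scalar. The only caveat worth noting is that the identification of the signed cofactors with $(r:s_{01}:s_{10}:s_{11}:s_{12})$ is projective, so the displayed determinant gives the Coble cubic as a hypersurface rather than a normalized polynomial; your bookkeeping remark already covers this.
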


We note that the 10-to-1 map $\mathcal{H} \dashrightarrow \mathcal{B}$
is analogous to the 64-to-1 map in \cite[(7.1)]{RSSS}
from the Satake hypersurface onto the
G\"opel variety. The formula for the Coble cubic in
Corollary \ref{cor:coblecubic} is analogous to the expression
 for the Coble quartic in \cite[Theorem 7.1]{RSSS}.

In this section we have now introduced four variants of a universal abelian surface.
Each of these is a five-dimensional projective variety.
Our universal abelian surfaces reside
\begin{compactenum}[\indent (a)]
\item in  $\PP^3 \times \PP^8$ with coordinates $({\bf c}, {\bf x})$,
\item in $\mathcal{B} \times \PP^8 \subset \PP^4 \times \PP^8$ with
coordinates $((r:s_{ij}), {\bf x})$,
\item in
$\mathcal{B} \times \PP^8 \subset \PP^{39} \times \PP^8$ with
coordinates $({\bf m}, {\bf x})$,
\item in $\mathcal{H} \times \PP^8 \subset \PP^4 \times \PP^8$ with
coordinates $({\bf u}, {\bf x})$.
\end{compactenum}
A natural commutative algebra problem is to identify explicit minimal generators for
the bihomogeneous prime ideals of each of these universal abelian surfaces.

For instance, consider case (d).
The ideal contains the polynomial ${\rm det}(U)$
of bidegree $(10,0)$
and eight polynomials of bidegree $(8,2)$,
namely the partial derivatives of $C$
with respect to the $x_{ij}$.
However, these nine do not suffice.
For instance, we have ten linearly independent ideal generators of bidegree
$(3,3)$, namely the $2 \times 2$-minors of the $2 \times 5$-matrix
$$
\begin{bmatrix}
f(\mathbf{x}) & g_{01}(\mathbf{x})      & g_{10}(\mathbf{x})       & g_{11}(\mathbf{x})     & g_{12}(\mathbf{x}) \\
f(\mathbf{u}) & g_{01}(\mathbf{u})      & g_{10}(\mathbf{u})       & g_{11}(\mathbf{u})     & g_{12}(\mathbf{u})
\end{bmatrix}.
$$
These equations have been verified numerically using {\tt Sage} \cite{sage}.
For a fixed  general point ${\bf u} \in \mathcal{S}$, these $2 \times 2$-minors
give Gunji's three cubics that were mentioned in Theorem \ref{thm:abelian93}.

For the case (a) here is a concrete conjecture concerning the desired prime ideal.

\begin{conjecture} \label{conj:ninetythree}
The prime ideal of the universal abelian surface in $\PP^3 \times \PP^8$ is
minimally generated by $93$ polynomials, namely $\,9$
polynomials of bidegree $(4,2)$ and $\,84$  of bidegree $(3,3)$.
\end{conjecture}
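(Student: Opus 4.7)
The plan is to exhibit 93 explicit polynomials of the stated bidegrees that lie in $I_\mathcal{U}$, the ideal of the universal abelian surface $\mathcal{U} \subset \PP^3 \times \PP^8$, and then confirm that they form a minimal generating set by a bigraded Betti number computation.

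The 9 polynomials of bidegree $(4,2)$ come from the Coble cubic: substitute the parametrization (\ref{eq:burkpara}) into $C = rf + s_{01}g_{01} + s_{10}g_{10} + s_{11}g_{11} + s_{12}g_{12}$ and take the partial derivatives $\partial C/\partial x_{jk}$ for $(j,k) \in \{0,1,2\}^2$. Since $r$ and each $s_{ij}$ is homogeneous of degree $4$ in $\mathbf{c}$, each of these has bidegree $(4,2)$ in $(\mathbf{c},\mathbf{x})$, and they vanish on $\mathcal{U}$ because fiberwise $\mathcal{U}$ is the singular locus of the Coble cubic by Theorem \ref{thm:abelian93}.

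To produce the 84 polynomials of bidegree $(3,3)$, I would imitate the matrix construction used in case (d) of the excerpt. Over the Hessian $\mathcal{H}$, the $2 \times 2$-minors of the $2 \times 5$-matrix pairing $(f, g_{01}, g_{10}, g_{11}, g_{12})(\mathbf{x})$ with the same quintuple evaluated at $\mathbf{u}$ give 10 cubic-cubic relations, of which only three are independent modulo the fiber quadrics and recover Gunji's cubics \cite[Theorem 8.3]{Gun}. A direct transfer to the $\mathbf{c}$-coordinates fails because $r$ and the $s_{ij}$ are quartic in $\mathbf{c}$, not cubic. Instead, one can exploit $\mathrm{Sp}_4(\F_3)$-equivariance: the bidegree-$(3,3)$ component of $I_\mathcal{U}$ is a $\mathrm{Sp}_4(\F_3)$-subrepresentation of $\mathrm{Sym}^3(K^4) \otimes \mathrm{Sym}^3(K^9)$, and one identifies the 84-dimensional subrepresentation by orbiting a suitable seed (the Gunji cubics lifted from a generic fiber, with coefficients rewritten as cubic forms in $\mathbf{c}$) across this action.

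The main obstacle is the theoretical construction of these 84 generators; once they are exhibited, completeness and minimality follow from a bigraded Betti number computation in Macaulay2 or Singular. I would therefore carry out the verification computationally first, parametrizing $\mathcal{U}$ through the third-order theta functions (or via the 10-to-1 map from Corollary \ref{cor:coblecubic} combined with elimination), computing $I_\mathcal{U}$ and a minimal bigraded free resolution, and reading off $\beta_{1,(4,2)} = 9$ and $\beta_{1,(3,3)} = 84$ with no other contributions at homological degree 1. The representation-theoretic derivation of the 84 generators can then be worked out guided by the computer output, yielding a conceptual proof.
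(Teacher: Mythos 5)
First, a point of framing: the statement is a \emph{conjecture} in the paper --- no proof is given there, and your proposal does not constitute one either; both amount to a verification strategy. Your bidegree-$(4,2)$ candidates do agree with the paper's: the Coble cubic $C$, rewritten in the coordinates $(\mathbf{c},\mathbf{x})$ via \eqref{eq:burkpara}, has bidegree $(4,3)$, so its nine partials $\partial C/\partial x_{jk}$ have bidegree $(4,2)$ and vanish on the universal abelian surface by Theorem~\ref{thm:abelian93}.

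The genuine gap is in the bidegree-$(3,3)$ part. The paper has an explicit construction that you are missing: the $84$ polynomials are the $6\times 6$-subpfaffians of the skew-symmetric $9\times 9$-matrix \eqref{eqn:skew9matrix} of Gruson--Sam, whose entries are bilinear of bidegree $(1,1)$; since $\binom{9}{6}=84$ and a $6\times 6$ Pfaffian is cubic in the entries, this is exactly where both the number $84$ and the bidegree $(3,3)$ come from. (The same matrix ties the two families together: its nine principal $8\times 8$-subpfaffians are $x_{00}C,\dots,x_{22}C$, so the $(4,2)$ generators are derivatives of an $8\times 8$ Pfaffian.) Your alternative --- orbiting a ``seed'' under $\mathrm{Sp}_4(\F_3)$ inside $\mathrm{Sym}^3(K^4)\otimes\mathrm{Sym}^3(K^9)$ --- is underdetermined: the span of the orbit of a single polynomial is some subrepresentation whose dimension you cannot control a priori, and you give no reason it should be exactly $84$-dimensional or exhaust the $(3,3)$-graded piece of the ideal. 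Separately, even granting the $93$ candidates, your closing step conflates two computations: a bigraded Betti table of the ideal \emph{they generate} certifies minimality of that generating set, but to certify that they generate the full bihomogeneous prime ideal of the universal abelian surface you must first compute that prime ideal by elimination or saturation from the parametrization and compare. That elimination is precisely the computation that has not been carried out, which is why the statement is left as a conjecture, in analogy with \cite[Conjecture 8.1]{RSSS}.
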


The $84$ polynomials of bidegree $(3,3)$ are obtained as the $6 \times 6$-subpfaffians of the matrix
\begin{align} \label{eqn:skew9matrix}
\begin{bmatrix}
0 & -c_0 x_{02} & c_0 x_{01} & -c_1 x_{20} & -c_2 x_{22} & -c_3 x_{21} & c_1 x_{10} & c_3 x_{12} & c_2 x_{11} \\
c_0 x_{02} & 0 & -c_0 x_{00} & -c_3 x_{22} & -c_1 x_{21} & -c_2 x_{20} & c_2 x_{12} & c_1 x_{11} & c_3 x_{10} \\
-c_0 x_{01} & c_0 x_{00} & 0 & -c_2 x_{21} & -c_3 x_{20} & -c_1 x_{22} & c_3 x_{11} & c_2 x_{10} & c_1 x_{12} \\
c_1 x_{20} & c_3 x_{22} & c_2 x_{21} & 0 & -c_0 x_{12} & c_0 x_{11} & -c_1 x_{00} & -c_2 x_{02} & -c_3 x_{01} \\
c_2 x_{22} & c_1 x_{21} & c_3 x_{20} & c_0 x_{12} & 0 & -c_0 x_{10} & -c_3 x_{02} & -c_1 x_{01} & -c_2 x_{00} \\
c_3 x_{21} & c_2 x_{20} & c_1 x_{22} & -c_0 x_{11} & c_0 x_{10} & 0 & -c_2 x_{01} & -c_3 x_{00} & -c_1 x_{02} \\
-c_1 x_{10} & -c_2 x_{12} & -c_3 x_{11} & c_1 x_{00} & c_3 x_{02} & c_2 x_{01} & 0 & -c_0 x_{22} & c_0 x_{21} \\
-c_3 x_{12} & -c_1 x_{11} & -c_2 x_{10} & c_2 x_{02} & c_1 x_{01} & c_3 x_{00} & c_0 x_{22} & 0 & -c_0 x_{20} \\
-c_2 x_{11} & -c_3 x_{10} & -c_1 x_{12} & c_3 x_{01} & c_2 x_{00} & c_1 x_{02} & -c_0 x_{21} & c_0 x_{20} & 0
\end{bmatrix}. 
\end{align}
This skew-symmetric $9 \times 9$-matrix was derived by Gruson and Sam \cite[\S 3.2]{GS},
building on the construction in \cite{GSW},
and it is analogous to the elliptic normal curve in (\ref{eq:pfaff5}).
The nine principal $8 \times 8$-subpfaffians of (\ref{eqn:skew9matrix})
are $x_{00} C, x_{01} C, \ldots, x_{22} C$, where $C$ is the
Coble quartic, now regarded as a polynomial in
$({\bf c}, {\bf x})$  of bidegree $(4,3)$.
Conjecture \ref{conj:ninetythree} is analogous to
\cite[Conjecture 8.1]{RSSS}.
The nine polynomials of bidegree $(4,2)$
are $\partial C/\partial x_{00}, \partial C/\partial x_{01},\ldots,\partial C/\partial x_{22}$.

\smallskip

In the remainder of this section we recall the symmetry groups that act on
our varieties. First there is the complex reflection group 
denoted by $\mathrm{G}_{32}$ in the classification of Shephard and Todd \cite{ST}.
The group $\mathrm{G}_{32}$ is a subgroup  of order $155520$ in  $\mathrm{GL}_4(K)$.
Precisely $80$ of its elements are {\it complex reflections}
 of order $3$. As a linear transformation on $K^4$, each such complex reflection has a triple eigenvalue $1$ and a single eigenvalue $\omega^{\pm 1} = \frac{1}{2} (-1 \pm \sqrt{-3})$.
 
The center of $\mathrm{G}_{32}$ is isomorphic to the cyclic group $\mathbb{Z}/6$. In our coordinates
$c_0,c_1,c_2,c_3$, the elements of the center
are scalar multiplications by $6$th roots of unity. Therefore, this gives an action by 
$\mathrm{G}_{32}/(\mathbb{Z}/6)$ on the  hyperplane arrangement $\mathrm{G}_{32}$ in $\PP^3$.
In fact, we have
\begin{equation}
\label{eq:groupiso}
\frac{\mathrm{G}_{32}}{\mathbb{Z}/6} \,\,
\simeq \,\,\mathrm{PSp}_4(\mathbb{F}_3) .
\end{equation}
The linear map
$\PP^3 \hookrightarrow \PP^{39}$, 
$c \mapsto u$, respects the isomorphism (\ref{eq:groupiso}).
The group acts on the $c$-coordinates by the reflections on $K^4$,
and it permutes the coordinates $u_{ijk\ell}$ via its action on the
lines through the origin in $\mathbb{F}_3^4$. Of course, the
group $\mathrm{PSp}_4(\mathbb{F}_3) $ also permutes the
$40$ isotropic planes in $\PP^{39}$, and this action is compatible with our
 monomial map $\,\PP^{39} \dashrightarrow \PP^{39}$.
 
\section{Tropicalizing the Burkhardt Quartic}
\label{sec:tropicalization}

Our goal is to understand the relationship between
classical and tropical moduli spaces for curves of genus two. To this end,
in this section, we study the tropicalization of the Burkhardt
quartic $\mathcal{B}$. This is a $3$-dimensional fan ${\rm trop}(\mathcal{B})$ in
the tropical projective torus~$\mathbb{TP}^{39}$. We shall see that 
the {\em tropical compactification}  of $\mathcal{B}^{\circ}$
equals the Igusa compactification of~$\mathcal{A}_3(2)$.

The variety $\mathcal{B}$ is the closure of the image of the composition
$\,\PP^3 \hookrightarrow \PP^{39} \dashrightarrow \PP^{39}\,$
of the linear map given by the arrangement $\mathrm{G}_{32}$ and the monomial map given by the $40 {\times} 40$ matrix $A$ that records incidences 
of isotropic planes and lines in $\mathbb{F}_3^4$. To be precise, recall that the source $\PP^{39}$ has coordinates $e_\ell$ indexed by lines $\ell \subset \mathbb{F}_3^4$, the 
target $\PP^{39}$ has coordinates $e_W$ 
indexed by isotropic planes $W \subset \mathbb{F}_3^4$,
 and the linear map $A$ is defined by $A(e_\ell) = \sum_{W \supset \ell} e_W$. 
 This implies the representation
\begin{equation}
\label{eq:tropBerg}
{\rm trop}(\mathcal{B}) \,\, = \,\,
 A \cdot{\rm Berg}(\mathrm{G}_{32}) \quad \subset \quad \mathbb{TP}^{39}
\end{equation}
of our tropical threefold as the image under $A$
of the {\em Bergman fan} of the matroid of $\mathrm{G}_{32}$.
By this we mean the unique coarsest fan structure
on the tropical linear space given by the
rank $4$ matroid on the $40 $ hyperplanes of $\mathrm{G}_{32}$.
This Bergman fan is simplicial, as suggested by
the general theory of \cite{ARW}.
We computed its cones using
the software {\tt TropLi} due to Rinc\'on~\cite{Rin}.

\begin{lemma}
The Bergman complex of the rank $4$ matroid of the complex root system
$\mathrm{G}_{32}$  has  $170$ vertices, $1800$ edges
and $3360$ triangles, so its Euler characteristic equals $1729$.
The rays and cones of the corresponding Bergman fan 
$\,{\rm Berg}(\mathrm{G}_{32}) \subset \mathbb{TP}^{39}$
are described below.
\end{lemma}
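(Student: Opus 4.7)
The plan is to apply the Ardila--Klivans description of the Bergman fan of a matroid to the rank-$4$ matroid $M$ of linear dependencies among the 40 linear forms $u_{ijk\ell}$ listed in Section~\ref{sec:burkhardt}. In the coarsest fan structure, the rays of ${\rm Berg}(M) \subset \mathbb{TP}^{39}$ are in bijection with the connected flats of $M$ of positive rank and corank, while higher cones are indexed by nested sets of such flats in the sense of Feichtner--Sturmfels. Since $M$ has rank $4$, the resulting spherical complex is pure of dimension $2$, matching the announced format of vertices, edges, and triangles.

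I would first tabulate the flats of $M$ rank by rank. The 40 rank-$1$ flats are the hyperplanes themselves, each a singleton and hence connected. Rank-$2$ flats correspond to codimension-$2$ subspaces of $\PP^3$ lying on multiple hyperplanes of $\mathrm{G}_{32}$, and rank-$3$ flats to the isolated multiple points. Using the coordinates of the $u_{ijk\ell}$ and the fact that two hyperplanes of $\mathrm{G}_{32}$ meet in a higher flat precisely when the corresponding lines in $\mathbb{F}_3^4$ pair as prescribed by the Hermitian structure of the root system, one organizes these flats into $\mathrm{PSp}_4(\mathbb{F}_3)$-orbits and retains only those that are connected (equivalently, those whose restriction matroid does not split as a direct sum; in rank $2$ this forces $|F| \ge 3$). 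Counting the surviving connected flats should produce $40 + 130 = 170$ vertices; enumerating nested pairs---either chains $F_1 \subsetneq F_2$ or incomparable pairs whose join is disconnected---should give $1800$ edges; and $3$-element nested sets should give $3360$ triangles.

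The principal obstacle is that, even after the $\mathrm{PSp}_4(\mathbb{F}_3)$-action cuts the problem down to a few orbit representatives, checking connectedness and nestedness for each is tedious and error-prone by hand. I would therefore feed the $40 \times 4$ matrix of coefficients of the linear forms to Rinc\'on's package \texttt{TropLi} \cite{Rin}, which constructs the matroid from linear dependencies and returns the rays and maximal cones of the Bergman fan directly, at which point the face numbers are read off and cross-checked against the orbit-by-orbit count. The Euler characteristic claim is then a consequence of the Ardila--Klivans theorem that the Bergman complex of a connected matroid of rank $r$ is homotopy equivalent to a wedge of $(r-2)$-spheres: for $r=4$ it is a wedge of two-spheres, and
\[
\tilde\chi\bigl({\rm Berg}(\mathrm{G}_{32})\bigr) \;=\; V - E + F - 1 \;=\; 170 - 1800 + 3360 - 1 \;=\; 1729.
\]
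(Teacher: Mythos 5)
Your proposal matches the paper's approach: the lemma is established by feeding the rank-$4$ matroid of the $40$ linear forms to Rinc\'on's \texttt{TropLi}, reading off the connected flats and nested cones organized into $\mathrm{PSp}_4(\mathbb{F}_3)$-orbits, and the paper likewise treats the coincidence of the coarsest fan structure with the nested set complex as a computational fact rather than a general theorem. The only cosmetic difference is the sanity check on $1729$: you invoke the wedge-of-spheres homotopy type, while the paper cross-checks against the M\"obius number computed as the product of exponents $1\cdot 7\cdot 13\cdot 19$ from Orlik--Solomon.
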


The Euler characteristic is the {\em M\"obius number}
of the matroid, which can also be computed 
as the product of the {\em exponents} $n_i$ in \cite[Table 2]{OS}
of the complex reflection group~$\mathrm{G}_{32}$:
$$  1 \cdot 7 \cdot 13 \cdot 19 \,\,=\,\,
1729 \,\, = \,\, 3360 - 1800 + 170 -1 . $$
See \cite[(9.2)]{RSSS} for
the corresponding formula
for the Weyl group of $\mathrm{E}_7$
(and genus $3$ curves).

We now discuss the combinatorics of ${\rm Berg}(\mathrm{G}_{32})$.
The space $\mathbb{TP}^{39} = \R^{40}/\R(1,1,\ldots,1)$
is spanned by unit vectors $e_{0001}, e_{0010}, \ldots, e_{1222}$
that are labeled by the $40$ lines in $\mathbb{F}_3^4$ as before.
The $170$ rays of the Bergman fan correspond to 
the {\em connected flats} of the matroid of $\mathrm{G}_{32}$,
and these come in three symmetry classes,
according to the rank of the connected~flat:
\begin{enumerate}
\item[(a)] $40$ Bergman rays of rank $1$.
These are spanned by the unit vectors $e_{0001},e_{0010},\ldots,e_{1222}$.
\item[(b)] $90$ Bergman rays of rank $2$, such as 
$e_{0001}+e_{0100}+e_{0101}+e_{0102}$,  which
represents
$\,\{ c_1,
c_1{+}c_2{+}c_3,
c_1 {+} \omega c_2 {+} \omega c_3,
c_1{+}\omega^2 c_2{+}\omega^2 c_3\}$.
These are the non-isotropic planes in
$\mathbb{F}_3^4$.
\item[(\"a)] $40$ Bergman rays of rank $3$, such as
$$\,e_{0001}+e_{0010}+e_{0011}+e_{1100}+e_{1101}+e_{1102}
 +e_{1110}+e_{1111}+e_{1112}+e_{1120}+e_{1121}+e_{1122} .$$
These correspond to the Hesse pencils in $\mathrm{G}_{32}$,
and to the hyperplanes in $\mathbb{F}_3^4$.
Note that the $12$ indices above are perpendicular to
$(0,0,1,2)$ in the symplectic inner product.
\end{enumerate}

The $3360$ triangles of the Bergman complex of $\mathrm{G}_{32}$  also come in
three symmetry classes:
\begin{enumerate}
\item[(aa\"a)] Two orthogonal lines (a) together with a hyperplane (\"a) that
contains them both. This gives $480$ triangles because each hyperplane
contains $12$ orthogonal pairs.
\item[(ab\"a)] A flag consisting of a line (a) contained in a non-isotropic plane (b)
contained in a hyperplane (\"a). There are $1440 $ such triangles
since each of the $90$ planes has
$4 \cdot 4$~choices.
\item[(aab)] Two orthogonal lines (a) together with a non-isotropic plane (b).
The plane contains one of the lines and is orthogonal to the other one.
The count is also $1440$.
\end{enumerate}

The $1800$ edges of the Bergman complex come in five symmetry classes:
there are $240$ edges (aa) given by pairs of orthogonal lines,
$360$ edges (ab) given by lines in non-isotropic planes,
$480$ edges (a\"a) given by lines in hyperplanes,
$360$ edges (b\"a) given by non-isotropic planes in hyperplanes,
and $360$ edges (ab${}^\perp$) obtained by dualizing the previous pairs (b\"a).

Our calculations establish the following statement: 

\begin{proposition} \label{prop:CP}
The Bergman complex coincides with the nested set complex for the matroid of $\mathrm{G}_{32}$. In particular, the tropical compactification of the complement of the hyperplane arrangement $\mathrm{G}_{32}$ coincides with the wonderful compactification of de Concini--Procesi \cite{dCP}. 
\end{proposition}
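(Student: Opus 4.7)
The plan is to identify the simplicial fan structure on $\mathrm{Berg}(\mathrm{G}_{32})$ tabulated above with the nested set complex $\mathcal{N}(M)$ of the rank-$4$ matroid $M$ of $\mathrm{G}_{32}$ for its minimal building set, namely the set of connected flats. Once this identification is in hand, the wonderful compactification assertion follows from the theorem (extending \cite{dCP}) that the tropical compactification defined by the nested set fan of the minimal building set coincides with the de Concini--Procesi wonderful model.

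Recall that a set of connected flats $S$ is a face of $\mathcal{N}(M)$ iff every antichain $\{F_1, \dots, F_j\} \subseteq S$ of size $\geq 2$ has join $F_1 \vee \cdots \vee F_j$ a disconnected flat. First I would match the $170$ rays of types (a), (b), (\"a) with the $40+90+40$ connected flats of ranks $1,2,3$. Rank~$1$ is automatic. The $90$ non-isotropic planes of type (b) yield connected rank-$2$ flats because each such plane contains four lines, giving a connected restricted matroid; by contrast the isotropic planes give decomposable rank-$2$ flats and hence do not appear as rays. The $40$ Hesse hyperplanes of type (\"a) each contain $12$ hyperplanes of $\mathrm{G}_{32}$, producing connected rank-$3$ flats.

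Next I would verify that each of the five edge classes and three triangle classes listed is a nested set. The chain classes (ab), (a\"a), (b\"a), and (ab\"a) are automatically nested. The antichain classes (aa), (ab${}^\perp$), (aa\"a), and (aab) all require checking that incomparable pairs have disconnected joins. The key case is (aa), pairs of symplectically orthogonal lines $\ell_1, \ell_2 \subset \mathbb{F}_3^4$: one must show that the rank-$2$ flat they span in the matroid of $\mathrm{G}_{32}$ contains no further hyperplane, so that the flat decomposes as the direct sum $\ell_1 \oplus \ell_2$. This is precisely the compatibility between symplectic orthogonality in $\mathbb{F}_3^4$ and Hermitian perpendicularity of the corresponding roots recorded in Section~\ref{sec:burkhardt}. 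The class (ab${}^\perp$) and the triangles (aa\"a), (aab) then reduce to the same dictionary together with a straightforward rank-$3$ analogue.

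The main obstacle is organizing this case analysis efficiently. I would exploit the transitivity of $\mathrm{PSp}_4(\mathbb{F}_3)$ on each of the listed face classes, so that it suffices to verify the nested condition for a single orbit representative per class. Comparing the resulting orbit sizes with the tabulated face numbers $(170, 1800, 3360)$ then confirms that no nested set is missed, completing the identification $\mathrm{Berg}(\mathrm{G}_{32}) = \mathcal{N}(M)$ and hence the wonderful compactification statement.
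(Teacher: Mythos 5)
The paper gives no written proof of this proposition: it is prefaced by ``Our calculations establish the following statement,'' the calculation being the {\tt TropLi} computation of the cones of ${\rm Berg}(\mathrm{G}_{32})$ recorded in the preceding census, compared against the nested set complex. Your proposal is essentially a hand-organized version of that same verification, using transitivity of $\mathrm{G}_{32}/(\mathbb{Z}/6)\cong\mathrm{PSp}_4(\mathbb{F}_3)$ on each orbit of cones, and the key checks you isolate (a pair of Hermitian-perpendicular roots spans a two-element, hence decomposable, rank-$2$ flat; the connected rank-$2$ flats are the $90$ four-point flats attached to the non-isotropic planes) are the right ones.

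Two points need repair. First, the assertion that ``the isotropic planes give decomposable rank-$2$ flats'' is false: the four linear forms attached to the four lines of an isotropic plane, e.g.\ $u_{0001},u_{0010},u_{0011},u_{0012}$, are linearly independent, so this quadruple has rank $4$ and its closure is the entire ground set --- it is not a flat of any rank. The conclusion that isotropic planes contribute no rays is correct, but for that reason, not the one you give. Second, checking that every cone in the census is a nested set establishes only one containment. To conclude that the two simplicial complexes coincide you must either enumerate \emph{all} nested sets independently (orbit by orbit) and match the counts $(170,1800,3360)$, or invoke the result cited as \cite{feichtner} that the minimal nested set fan is a simplicial fan supported on ${\rm Berg}(\mathrm{G}_{32})$ and therefore refines the coarsest fan structure; once each coarse cone is itself shown to be a nested cone, that refinement is forced to be trivial. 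As written, your final counting step conflates these two directions, though either route closes the argument.
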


See \cite{feichtner} for the relation between tropical compactifications and wonderful compactifications. 
We expect that Proposition~\ref{prop:CP} is true for any finite complex reflection group, but we have not made any attempts to prove this.

The wonderful compactification is obtained by blowing up the irreducible flats
 of lowest dimension, then blowing up the strict transforms of the irreducible flats of next lowest dimension, etc.  In our case, the smallest irreducible flats  are $40$ points,
 corresponding to the
Bergman rays (a) and to Family 6 in \cite[Table 1]{GS}.
 This first blow-up $\widehat{\PP^3}$ is  the closure of the graph of the  map $\PP^3 \dashrightarrow \mathcal{B}$, by \cite[Proposition 3.25]{GS}.
 The next smallest irreducible flats are the strict transforms of 90 $\PP^1$'s,
 corresponding to the Bergman rays (b) and to Family 4 in \cite[Table 1]{GS}.
 After that, the only remaining irreducible flats are $40$ hyperplanes,
 corresponding to the Bergman rays (\"a) and to Family 2 in \cite[Table 1]{GS}.
   Hence the wonderful compactification $\widetilde{\PP^3}$ is obtained by blowing up these 90 $\PP^1$'s in $\widehat{\PP^3}$. The 90 exceptional divisors of $\widetilde{\PP^3} \to \widehat{\PP^3}$ get contracted to the 45 nodes of $\mathcal{B}$, so we can lift the map $\widetilde{\PP^3} \to \mathcal{B}$ to a map $\widetilde{\PP^3} \to \widetilde{\mathcal{B}}$,
   where $ \widetilde{\mathcal{B}}$
   denotes the blow-up of the Burkhardt quartic at its $45$ singular~points.
   
The hyperplane arrangement complement
 $\PP^3 \cap \mathbb{G}_m^{39}$ is naturally identified with the moduli space $\mathcal{M}_2(3)^-$ of smooth genus 2 curves with level 3 structure and the choice of a Weierstrass point (or equivalently, the choice of an
 odd theta characteristic). See \cite{bolognesi} for more about $\mathcal{M}_2(3)^-$. Hence we have the following commutative diagram
\begin{equation} \label{cd:oddtheta}
\begin{gathered}
\xymatrix{ \mathcal{M}_2(3)^- \ar@{^{(}->}[r] \ar[d] \,&\, 
\widetilde{\PP^3} 
 \ar[d] \ar[r] \,&\, \widehat{\PP^3} \ar[d]  \\
\mathcal{M}_2(3) \ar@{^{(}->}[r] \,&\, \widetilde{\mathcal{B}} \ar[r] \,&\, \mathcal{B} \,&\, } \end{gathered}
\end{equation}
where the vertical maps are generically finite of degree 6, the right
horizontal maps are blow-ups, and the moduli spaces $\mathcal{M}_2(3)^-$ and $\mathcal{M}_2(3)$ are
realized as  very affine varieties.

We now compute the tropical Burkhardt quartic
(\ref{eq:tropBerg}), by applying the linear map $A$
to the Bergman fan of $\mathrm{G}_{32}$. Note that the image lands in
 the tropicalization ${\rm trop}(\mathcal{T})$ of the toric 
variety $\mathcal{T} \subset \PP^{39}$.
We regard ${\rm trop}(\mathcal{T})$ 
as a $24$-dimensional  linear subspace of
$\mathbb{TP}^{39}$.

\begin{theorem} The tropical Burkhardt quartic
${\rm trop}(\mathcal{B})$ is the fan over a $2$-dimensional
simplicial complex with $85$ vertices,
$600$ edges and $880$ triangles.
A census appears in Table~\ref{fig:tropburkorbits}.
 \end{theorem}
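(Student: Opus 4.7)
The plan is to exploit the representation \eqref{eq:tropBerg}, which writes $\mathrm{trop}(\mathcal{B})$ as the image of $\mathrm{Berg}(\mathrm{G}_{32})$ under the $40\times 40$ incidence matrix $A$. Since $\mathrm{Berg}(\mathrm{G}_{32})$ has already been described above as a $3$-dimensional simplicial fan with $f$-vector $(170, 1800, 3360)$ and three $\mathrm{PSp}_4(\mathbb{F}_3)$-orbits of rays (a), (b), (\"a), the entire proof reduces to tracking how $A$ collapses and identifies these cones, and then verifying that the resulting image complex has the claimed $f$-vector $(85, 600, 880)$. By \cite[Theorem 3.1]{DFS}, the fan $A \cdot \mathrm{Berg}(\mathrm{G}_{32})$ carries the correct tropical structure on $\mathrm{trop}(\mathcal{B})$, so only a combinatorial analysis is required.

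The first concrete step is to compute $A(E_F) \bmod \mathbb{R}\mathbf{1}$ for one representative ray $E_F$ from each of the three orbits (a), (b), (\"a), using the rule $A(e_\ell) = \sum_{W\supset \ell} e_W$ summed over lines $\ell$ in the flat $F$. Because $A$ is $\mathrm{PSp}_4(\mathbb{F}_3)$-equivariant, any identifications between image rays must respect this group action, so one only needs to compare pairs of orbit representatives; the target ray count $85 = 170/2$ strongly suggests that identifications occur in pairs, in the spirit of Lemma~\ref{lem:kernel} for the Segre case, where the involution should be symplectic duality $F \mapsto F^{\perp}$ on the set of connected flats of $\mathrm{G}_{32}$. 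Verifying this involves a short linear-algebra check per orbit pair $\{F, F^{\perp}\}$. Once the ray identifications are fixed, the same orbit-by-orbit analysis is applied to the five edge classes (aa), (ab), (a\"a), (b\"a), (ab${}^\perp$) and to the three triangle classes (aa\"a), (ab\"a), (aab): for each orbit one determines the dimension of the image cone, whether several source cones have the same image, and the resulting image orbit size. Summing contributions produces the census of Table~\ref{fig:tropburkorbits}; the totals $600$ and $880$ serve as a consistency check.

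The main obstacle is verifying that distinct $\mathrm{PSp}_4(\mathbb{F}_3)$-orbits of Bergman cones do not collide under $A$ in unanticipated ways, and that the image complex is indeed simplicial and pure of dimension $2$ rather than acquiring $2$-cells from $3$-cell collapses. The cleanest way to discharge this is a direct computation: feed the $170$ explicit ray vectors and the list of $3360$ triangles into a tropical software package such as {\tt gfan} or {\tt polymake} and compute the image fan together with its orbit structure under $\mathrm{PSp}_4(\mathbb{F}_3)$, cross-validating against the intrinsic calculation of $\mathrm{trop}(\mathcal{B})$ from the ideal $I_{\mathcal{B}}$ of Section~\ref{sec:burkhardt}. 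The tropical multiplicities of the maximal cones, which are recorded in the census, follow from \cite[Theorem 3.1]{DFS} applied orbit-wise, using the lattice index of $A$ restricted to the relevant $2$-dimensional sublattice of each source cone.
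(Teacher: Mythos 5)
Your proposal is correct and takes essentially the same route as the paper: both push the Bergman fan of $\mathrm{G}_{32}$ through the incidence matrix $A$ orbit by orbit, and the identification you guess, namely $F \mapsto F^{\perp}$ pairing the $40$ lines with the $40$ hyperplanes and the $90$ non-isotropic planes among themselves into $45$ pairs (with $A$ of a hyperplane ray equal to twice $A$ of the corresponding line ray), is exactly what the paper verifies on orbit representatives before tallying the $3$-to-$1$ and $4$-to-$1$ coverings of edges and triangles.
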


\begin{proof}
Given the $\mathrm{G}_{32}$-symmetry, the following properties of the map $A$ can be verified on representatives of $\mathrm{G}_{32}$-orbits.
The linear map $A \colon \mathbb{TP}^{39} \rightarrow \mathbb{TP}^{39}$
has the property that the image of each vector (\"a) equals twice that
of the corresponding unit vector (a). For instance, 
$$ A(e_{0001}{+}e_{0010}{+}e_{0011}{+}e_{1100}{+}e_{1101}
{+}e_{1102}{+}e_{1110}{+}e_{1111}{+}e_{1112}{+}e_{1120}
{+}e_{1121}{+}e_{1122}) \,= \,
2A e_{0012}. $$
Likewise, the $90$ vectors (b) come in natural pairs of
non-isotropic planes that are orthogonal complements.
The corresponding vectors have the same image under $A$. For instance,
\begin{equation}
\label{eq:planepair}
 A(e_{0001}+e_{0100}+e_{0101}+e_{0102}) \,\,=\,\,
A(e_{0010}+e_{1000}+e_{1010}+e_{1020}). 
\end{equation}
We refer to such a pair of orthogonal non-isotropic planes
as a {\em plane pair}. This explains the $85$ rays of
${\rm trop}(\mathcal{B})$, namely, they are the 40 lines $a$ and the
45 plane pairs $\{b,b^\perp\}$ in $\mathbb{F}_3^4$.

The image of each cone of ${\rm Berg}(\mathrm{G}_{32})$ under
$A$ is a simplicial cone of the same dimension. 
There are no non-trivial intersections of  image cones. The map
${\rm Berg}(\mathrm{G}_{32}) \rightarrow {\rm trop}(\mathcal{B})$
is a proper covering of fans. The 2-to-1 covering of the rays
induces a  3-to-1 or 4-to-1 covering  on each higher-dimensional cone.
The precise combinatorics is summarized in Table~\ref{fig:tropburkorbits}.

\begin{table}[h]
\begin{tabular}{|c|c|c|c|c|}
\cline{1-5}
Dimension & Orbits in ${\rm Berg}(\mathrm{G}_{32})$ & The map $A$ & Orbit size in
${\rm trop}(\mathcal{B})$  & Cone type \\
\cline{1-5}
\multirow{3}{*}{$1$} & $40$ (a) & \multirow{2}{*}{$2$ to $1$} & \multirow{2}{*}{$40$} & \multirow{2}{*}{(a)} \\
\cline{2-2}
                                & $40$ (\"a) &                                            &                                   & \\
\cline{2-5}
                                & $90 $ (b) & $2$ to $1$                          & $45$                           & (b) \\
\cline{1-5}
\multirow{5}{*}{$2$} & $240$ (aa) & \multirow{2}{*}{$3$ to $1$} & \multirow{2}{*}{$240$} & \multirow{2}{*}{(aa)}\\
\cline{2-2}
                                & $480$ (a\"a)&                                            &                                   & \\
\cline{2-5}
                                & $360$ (ab) & \multirow{3}{*}{$3$ to $1$} & \multirow{3}{*}{$360$} & \multirow{3}{*}{(ab)}\\
\cline{2-2}
                                & $360$ (b\"a)&                                            &                                   & \\
\cline{2-2}
                                & $360$ (ab${}^\perp$)&                                           &                                   & \\
\cline{1-5}
\multirow{3}{*}{$3$} & $1440$ (aab) & \multirow{2}{*}{$4$ to $1$} & \multirow{2}{*}{$720$} & \multirow{2}{*}{(aab)} \\
\cline{2-2}
                                & $1440$ (ab\"a)&                                            &                                   & \\
\cline{2-5}
                                & $480$ (aa\"a) & $3$ to $1$                          & $160$                           & (aaa) \\ 
\cline{1-5}
\end{tabular}
\label{fig:tropburkorbits}
\caption{Orbits of cones in the tropical Burkhardt quartic}
\smallskip
\end{table}

The types of the cones are named by replacing
\"a with a, and b${}^\perp$ with b.
In total,  there are $40$ vertices of type (a)
and $45$ vertices of type (b).
There are $240$ edges of type (aa),
corresponding to pairs of lines
$a \perp a'$, and $360$ edges of type (ab),
corresponding to inclusions $a \subset b$.
Finally, there are $160$ triangles of type (aaa) and
 $720$ triangles of type (aab).
\end{proof}

\begin{remark} \rm
There is a bijection between the $45$ rays of type (b) in $\mathrm{trop}(\mathcal{B})$ and the $45$ singular points in $\mathcal{B}$. Namely, each vector of type (b) can be written such that $16$ of its coordinates are $1$ and the other coordinates are $0$.
These $16$ coordinates are exactly the $16$ zero coordinates in the corresponding singular point.
Note that the zero coordinates of the particular singular point
in (\ref{eq:singpoint}) form precisely the support of
 the vector (\ref{eq:planepair}). The number $16 $
 arises because each of the $45$ plane pairs $\{b,b^\perp\}$ determines
$16$ of the $40$ isotropic planes: take any
vector in $b$ and any vector in $b^\perp$, and these two will span an isotropic plane. \hfill \qed
\end{remark}

\smallskip

We next consider the {\em tropical compactification}
$\overline{\mathcal{B}}$
of the open Burkhardt quartic $\mathcal{B}^{\circ} = \mathcal{M}_2(3)$.
By definition,  $\overline{\mathcal{B}}$ is the closure
of $\mathcal{B}^{\circ} \subset \mathbb{G}_m^{24}$ inside of the 
toric variety defined by the fan ${\rm trop}(\mathcal{B})$.
This toric variety is smooth because
the rays of the two types of maximal cones (aaa) and (aab)
can be completed to a basis of the lattice 
$\mathbb{Z}^{24}$ spanned by all $85$ rays.

\begin{proposition} \label{prop:tropB-SNC}
The tropical compactification $\overline{\mathcal{B}}$
is sch\"on in the sense of Tevelev \cite{Tev}. 
The boundary $\overline{\mathcal{B}}\setminus \mathcal{B}^{\circ}$
is a normal crossing divisor consisting of $85$
irreducible smooth surfaces.
\end{proposition}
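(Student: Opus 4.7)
The plan is to verify sch\"onness via Tevelev's criterion---smooth initial degenerations at every cone of $\mathrm{trop}(\mathcal{B})$---and then deduce the normal-crossing conclusion automatically. The ambient toric variety $X_\Sigma$ defined by the fan $\mathrm{trop}(\mathcal{B})$ is already smooth, as observed just before the proposition: each maximal cone of type (aaa) or (aab) has primitive ray generators extending to a $\Z$-basis of the $24$-dimensional lattice spanned by the $85$ rays, and one checks this on one representative per $\mathrm{PSp}_4(\mathbb{F}_3)$-orbit using the explicit encoding of rays as lines and plane-pairs in $\mathbb{F}_3^4$. Once sch\"onness is known, \cite{Tev} implies that $\overline{\mathcal{B}}$ meets every torus orbit of $X_\Sigma$ transversally, so the restriction to $\overline{\mathcal{B}}$ of the toric SNC boundary is itself a normal crossings divisor.

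The core strategy is to transfer sch\"onness from the wonderful compactification $\widetilde{\PP^3}$ of the arrangement complement. By Proposition~\ref{prop:CP}, $\widetilde{\PP^3}$ is the tropical compactification determined by $\mathrm{Berg}(\mathrm{G}_{32})$, and the classical de Concini--Procesi theory tells us that this wonderful variety is smooth with SNC boundary indexed by connected flats---in particular, it is sch\"on. Using the identity $\mathrm{trop}(\mathcal{B}) = A \cdot \mathrm{Berg}(\mathrm{G}_{32})$ from~(\ref{eq:tropBerg}) together with the orbit census in Table~\ref{fig:tropburkorbits}, for each cone $\sigma$ of $\mathrm{trop}(\mathcal{B})$ I would choose a preimage cone $\tau$ of $\mathrm{Berg}(\mathrm{G}_{32})$. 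The initial degeneration $\mathrm{in}_\sigma(\mathcal{B}^\circ)$ is then the image, under the monomial map induced by $A$, of the smooth initial degeneration $\mathrm{in}_\tau(\PP^3 \cap \mathbb{G}_m^{39})$; by $\mathrm{PSp}_4(\mathbb{F}_3)$-equivariance one need only work on a single representative of each orbit type in the table. Smoothness of each such image can then be checked directly by composing the explicit parametrization of the wonderful stratum with $A$.

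Irreducibility of the $85$ boundary surfaces follows once each initial degeneration is shown to be reduced and connected: the $40$ rays of type (a) correspond to strict transforms of Jacobi-plane configurations cut out by the hyperplanes $\{m_i=0\}$, while the $45$ rays of type (b) correspond, by the Remark above, to the exceptional divisors of the blow-up $\widetilde{\mathcal{B}} \to \mathcal{B}$ at the $45$ nodes. The main obstacle I anticipate is carefully handling the $2$-to-$1$ identifications in the covering $\mathrm{Berg}(\mathrm{G}_{32}) \to \mathrm{trop}(\mathcal{B})$, especially at rays of type (b) where a non-isotropic plane $b$ and its symplectic perpendicular $b^\perp$ collapse to a single ray. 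One must verify that the two corresponding strata in $\widetilde{\PP^3}$ do not merely surject onto a common stratum in $\overline{\mathcal{B}}$, but glue into a single smooth irreducible surface rather than a self-glued surface with a new singular locus. This is plausible from the $\mathrm{PSp}_4(\mathbb{F}_3)$-symmetry interchanging $b$ and $b^\perp$, but requires explicit local analysis on the monomial map $A$ restricted to the relevant torus quotient, analogous to the computation of the image fan structure already carried out in the proof of the theorem.
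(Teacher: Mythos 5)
Your overall frame---smooth ambient toric variety, plus smooth and connected initial degenerations at every cone, implies sch\"on and hence a normal crossings boundary---matches the paper, which cites \cite[Lemma 2.7]{hacking} for exactly this reduction. The gap is in your core step: you claim $\mathrm{in}_\sigma(\mathcal{B}^\circ)$ is the image under the monomial map $A$ of the smooth initial degeneration of the arrangement complement, so that sch\"onness ``transfers'' from the wonderful compactification $\widetilde{\PP^3}$. Neither smoothness nor even reducedness of initial degenerations pushes forward through a monomial map: the map from the arrangement complement to $\mathcal{B}^{\circ}$ is generically $6$-to-$1$ (diagram \eqref{cd:oddtheta}), and the relation between $\mathrm{in}_\sigma$ of the image and the image of $\mathrm{in}_\tau$ holds at best set-theoretically, not scheme-theoretically. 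The Igusa quartic in Section~\ref{sec:segrecubic} is a counterexample arising from the identical construction: it is the image of a smooth linear space under a monomial map, yet every maximal cone of $\mathrm{trop}(\mathcal{I})$ carries multiplicity two, so its initial degenerations are non-reduced. Your closing paragraph acknowledges a difficulty at the type (b) rays, but the problem is not confined to the $b$ versus $b^\perp$ identification; it affects every cone, and ``checked directly by composing the parametrization with $A$'' is a placeholder for the required work rather than an argument.

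The paper sidesteps the transfer entirely: it works with the ideal $I_{\mathcal{B}}$ itself (the $35$ linear forms together with the quartic \eqref{eq:burkhardtclassic}) and computes, for one representative weight vector $v$ in each of the six symmetry classes (a), (b), (aa), (ab), (aaa), (aab), the initial ideal $\mathrm{in}_v(I_{\mathcal{B}})$. In each case this is generated by linear binomials and trinomials together with a single non-linear polynomial $f$ (for instance $f = -m_0 m_{13} + m_1 m_4$ in types (b), (ab), (aab)), and one checks by hand that $\{f=0\}$ has no singular point with all coordinates nonzero, which gives smoothness and connectedness of each initial variety in the torus. To repair your proof you would need to replace the transfer claim by this kind of explicit verification on the target side; the source-side smoothness of the wonderful compactification, while true, does not do the job.
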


\begin{proof}
Since our fan  on ${\rm trop}(\mathcal{B})$
defines a smooth toric variety, it suffices to show that
all initial varieties $V({\rm in}_v (I_\mathcal{B})) $
are smooth and connected in the torus $\mathbb{G}_m^{39}$ \cite[Lemma 2.7]{hacking}. 
There are six symmetry classes of initial ideals
${\rm in}_v (I_\mathcal{B})$. Each of them is generated by
linear binomials and trinomials together with one non-linear
polynomial $f$, obtained from the quartic by possibly removing monomial factors.
We present representatives for the six classes.
The plane pair $ \{b,b^\perp\}$ appearing in three of the cases
is precisely the one displayed in (\ref{eq:planepair}).
\begin{itemize}
\item[(a)] For the vertex given by the line $(0,0,0,1)$ we take the weight vector
$ v = A e_{0001} = e_0+e_1+e_2+e_3$. Then
                 $f = m_0 m_4^3-3 m_0 m_4 m_7 m_{10}
                    +m_0 m_7^3+m_0 m_{10}^3-3 \sqrt{3}m_1 m_4m_7 m_{10} $.
This bihomogeneous polynomial defines a smooth surface in a quotient torus
$\mathbb{G}_m^3$.
\item[(b)] For the vertex $ \{b,b^\perp\}$ we take the vector (\ref{eq:planepair}).
This is the incidence vector of the zero coordinates in (\ref{eq:singpoint}), namely
 $v =  e_0+e_1+e_2+e_3+e_4+e_5+e_6+e_{13}+
e_{14}+e_{15}+e_{16}+e_{17}+e_{18}+e_{19}+e_{20}+e_{21}$.
The resulting non-linear polynomial is
$ f = - m_0 m_{13}+ m_1 m_4$.
\item[(aa)] For the edge given by the orthogonal lines
$(0,0,0,1)$ and $ (0,0,1,0)$ in $\mathbb{F}_3^4$, we take
$v = 2 e_0+e_1+e_2+e_3+e_4+e_5+e_6 $, and we get
$f = m_0 m_7^3+m_0 m_{10}^3-3 \sqrt{3} m_1 m_4 m_7 m_{10}$.
\item[(ab)] For the edge given by
$(0,0,0,1)$ and $\{b,b^\perp\}$, we take
$v = 2 e_0+2 e_1+2 e_2+2 e_3+e_4+e_5+e_6+e_{13}+e_{14}+e_{15}+e_{16}
+e_{17}+e_{18}+e_{19}+e_{20}+e_{21}$, and we get
$f = - m_0 m_{13}+m_1 m_4$.
\item[(aaa)] For the triangle given by
$(0,0,0,1)$, $(0,0,1,0)$ and $(0,0,1,1)$, we take
$v = 
A(e_{0001} {+}e_{0010} $ $
{+} e_{0011})  = 
3e_0{+}e_1{+}e_2{+}e_3{+}e_4{+}e_5{+}e_6{+}e_7{+}e_8{+}e_9$,
and we get $f =m_0 m_{10}^2- 3 \sqrt{3} m_1 m_4 m_7$.
\item[(aab)] For the triangle given by
$(0,0,0,1)$, $(0,0,1,0)$ and $\{b,b^\perp\}$, we take
$v = 3 e_0{+}2 e_1{+}2 e_2 + 2 e_3{+}2 e_4{+}2 e_5{+}2 e_6{+}e_{13}
{+}e_{14}{+}e_{15}{+}e_{16}{+}e_{17}{+}e_{18}{+}e_{19}{+}e_{20}{+}e_{21}$. Here,
$f {=} - m_{0} m_{13}{+}m_1 m_4$.
\end{itemize}
Note that the polynomials $f$ are the same in the cases (b), (ab) and (aab),
but the varieties $V({\rm in}_v (I_{\mathcal{B}})) $
are different because of the $35$ linear relations.
In cases (b) and (ab) we have both linear trinomials and linear binomials,
while in case (aab) they are all binomials. In all six cases
the hypersurface $\{f=0\}$ has no singular points 
with all coordinates nonzero.
\end{proof}

Our final goal in this section is to equate the tropical compactification $\overline{\mathcal{B}}$ 
with the blown up Burkhardt quartic $\widetilde{\mathcal{B}}$.
By \cite[Theorem 5.7.2]{hunt}, we can identify $\widetilde{\mathcal{B}}$
with the Igusa desingularization of the Baily--Borel--Satake compactification $\mathcal{A}_2(3)^{\rm Sat}$ of $\mathcal{A}_2(3)$. The latter can be constructed as follows.
Let $\widehat{\mathcal{B}}$ be the projective dual variety to $\mathcal{B} \subset \PP^4$.
The canonical birational map $\mathcal{B} \dashrightarrow \widehat{\mathcal{B}}$ is defined outside of the 45 nodes. Since $\mathcal{B}$ is a normal variety, this map factors through the normalization of $\widehat{\mathcal{B}}$, which can be identified with $\mathcal{A}_2(3)^{\rm Sat}$ by \cite[\S 4]{FSM1}. The closure of the graph of the birational map $\mathcal{B} \dashrightarrow \mathcal{A}_2(3)^{\rm Sat}$ is the blow-up of $\mathcal{B}$ at its indeterminacy locus, i.e., the 45 nodes.
 Using \cite[Theorem 3.1]{vdG}, we may identify this with $\widetilde{\mathcal{B}}$. By symmetry, we could also view this as the closure of the image of the inverse birational map. This realizes $\widetilde{\mathcal{B}}$ as a blow-up of $\mathcal{A}_2(3)^{\rm Sat}$, and in particular, the map blows up the Satake boundary $\mathcal{A}_2(3)^{\rm Sat} \backslash \mathcal{A}_2(3)$ which has 40 components all isomorphic to $\mathcal{A}_1(3)^{\rm Sat} \cong \PP^1$.

\begin{lemma} \label{lem:partialtrop-M23}
The moduli space $\mathcal{A}_2(3)$ coincides with 
the partial compactification of $\mathcal{M}_2(3)$ given by the $1$-dimensional subfan of $\mathrm{trop}(\mathcal{B})$ that consists of the $45$ rays of type (b).
\end{lemma}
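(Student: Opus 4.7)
The plan is to realize the asserted partial compactification as an open subvariety of the Igusa desingularization $\widetilde{\mathcal{B}}$, and then observe that this open subvariety is exactly $\mathcal{A}_2(3)$.

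First I would identify the $45$ type (b) toric boundary divisors of $\overline{\mathcal{B}}$ with the $45$ exceptional divisors of the blow-up $\widetilde{\mathcal{B}} \to \mathcal{B}$ at the nodes. The combinatorial bijection between type (b) rays and nodes is already made in the Remark above: the ray indexed by a plane pair $\{b,b^\perp\}$ corresponds to the unique node whose $16$ vanishing $m$-coordinates form the support of the ray. Moreover, in case (b) of Proposition~\ref{prop:tropB-SNC} the initial ideal is generated by linear binomials together with the single smooth quadric $f = -m_0 m_{13}+m_1 m_4$, which is the local model for the exceptional divisor of a transverse nodal blow-up on a quartic threefold. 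Hence locally around each node the addition of the type (b) divisor agrees with the local blow-up construction defining $\widetilde{\mathcal{B}}$.

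Next I would identify the $40$ type (a) boundary divisors with the $40$ Satake exceptional divisors of the dual blow-up $\widetilde{\mathcal{B}} \to \mathcal{A}_2(3)^{\mathrm{Sat}}$ lying over the $40$ components of $\mathcal{A}_2(3)^{\mathrm{Sat}} \setminus \mathcal{A}_2(3)$. Both collections are $\mathrm{PSp}_4(\mathbb{F}_3)$-orbits of size $40$ and are supported on the strict transforms of the $40$ coordinate hyperplanes $\{m_\bullet = 0\}$ of $\PP^{39}$, which cut out the $40$ Jacobi tetrahedra of Section~\ref{sec:burkhardt}. Since $\mathrm{PSp}_4(\mathbb{F}_3) \cong \mathrm{G}_{32}/(\mathbb{Z}/6)$ is simple and acts equivariantly on both pictures via the isomorphism \eqref{eq:groupiso}, the two orbits necessarily coincide. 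Combining the two identifications, the partial compactification of $\mathcal{M}_2(3)$ given by the $1$-dimensional type (b) subfan is the complement in $\overline{\mathcal{B}}$ of the union of the $40$ type (a) divisors, and by this step it equals the complement in $\widetilde{\mathcal{B}}$ of the $40$ Satake exceptional divisors. By the description of the Igusa desingularization recalled just before the lemma, that complement is precisely $\mathcal{A}_2(3)$.

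The main obstacle is the identification in Step~2. A more robust alternative is a modular verification using the universal abelian surface of Theorem~\ref{thm:abelian93}: for a family of points in $\mathcal{B}^{\circ}$ approaching a type (b) stratum, the singular locus of the Coble cubic should degenerate to a product of two elliptic curves whose level $3$ structures split $\mathbb{F}_3^4$ according to $b \oplus b^\perp$, so $D_b$ parameterizes exactly one component of the reducible locus $\mathcal{A}_2(3) \setminus \mathcal{M}_2(3)$; a family approaching a type (a) stratum degenerates further to a semi-abelian surface and lands on the Satake boundary. The $45$ plane pairs biject with the $45$ symplectic splittings of $\mathbb{F}_3^4$, matching the $45$ components of the reducible locus in $\mathcal{A}_2(3)$, completing the identification.
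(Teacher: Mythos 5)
There is a genuine gap, and it occurs in both of your main steps. In Step 1, knowing from Proposition~\ref{prop:tropB-SNC} that the initial variety at a type (b) ray is a smooth quadric is only a consistency check; it does not identify the boundary divisor of the partial compactification $M$ with the exceptional $\PP^1\times\PP^1$ of the nodal blow-up. For that you must first show that the identity on $\mathcal{B}^{\circ}$ extends to a \emph{regular} map $M\to\mathcal{B}$ contracting each of the $45$ divisors to a node, then invoke the universal property of blow-ups to lift it to $M\to\widetilde{\mathcal{B}}$, and finally prove that this lift is an open immersion. That is precisely the content of the paper's proof, and the nontrivial inputs there are the covering $P\to M$ coming from the wonderful compactification $\widetilde{\PP^3}$ of the $\mathrm{G}_{32}$-arrangement complement (this is how regularity of $M\to\mathcal{B}$ and finiteness of the fibers over the exceptional divisors are checked: the preimage of an exceptional divisor in $\widehat{\PP^3}$ is two disjoint copies of $\PP^1\times\PP^1$), followed by Zariski's Main Theorem. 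None of this is supplied by the local-model heuristic; a priori your candidate identification could fail to extend to a morphism along the divisor, or could map it with degree greater than one.

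Step 2 is both circular and logically flawed. You compare the $40$ type (a) divisors of $\overline{\mathcal{B}}$ with the $40$ Satake divisors of $\widetilde{\mathcal{B}}$, but these live in two different compactifications, and identifying them presupposes a morphism (indeed an isomorphism) $\overline{\mathcal{B}}\to\widetilde{\mathcal{B}}$; that is Theorem~\ref{thm:IgusaBBS}, which the paper proves \emph{using} the present lemma. Moreover, two $\mathrm{PSp}_4(\mathbb{F}_3)$-orbits of the same cardinality need not coincide, and simplicity of the group plays no role in such an argument. Note also that the lemma concerns only the subfan of type (b) rays, so the type (a) divisors need not enter at all: the paper's route constructs $M\to\widetilde{\mathcal{B}}$ directly and identifies its image with $\mathcal{A}_2(3)$ by factoring $P\to\widetilde{\PP^3}\to\widetilde{\mathcal{B}}$ through $M$. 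Your closing ``modular'' alternative, degenerating the singular locus of the Coble cubic to a product of elliptic curves, is an attractive heuristic for \emph{why} the type (b) divisors should parametrize $\mathcal{A}_2(3)\setminus\mathcal{M}_2(3)$, but as written it is a plausibility argument about set-theoretic points rather than a proof that the partial tropical compactification is isomorphic, as a variety, to $\mathcal{A}_2(3)$.
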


\begin{proof}
Let $M$ be the partial compactification in question.
Let $\widetilde{\PP^3}$ be the wonderful compactification 
for $\mathrm{G}_{32}$ as described above.
The preimage of the 45 rays of type (b) in ${\rm Berg}(\mathrm{G}_{32})$ consists of 90 rays, and the resulting partial tropical compactification $P$ of $\PP^3 \setminus (40 \text{ hyperplanes})$ is the complement of the strict transforms of the reflection hyperplanes in $\widetilde{\PP^3}$. In the map $P \to \mathcal{B}$, the 90 divisors are contracted to the 45 singular points (2 divisors to each point). We have a map $P \to M$ which maps the 90 divisors of $P$ to the 45 divisors of $M$, and hence the birational map $M \dashrightarrow \mathcal{B}$ (given by the identity map on $\mathcal{M}_2(3)$) extends to a regular map $M \to \mathcal{B}$ which contracts the 45 divisors to the 45 singular points.

 By the universal property of blow-ups, there exists
a map $M \to \widetilde{\mathcal{B}}$ which takes each of the 45 divisors to one of the 45 exceptional divisors of the 
 blow-up $\widetilde{\mathcal{B}} \to \mathcal{B}$.
From our previous discussion, the image of the map $P \to \widetilde{\mathcal{B}}$ equals
 $\mathcal{A}_2(3)$. Since this map factors through $M$, the image of the map $M \to \widetilde{\mathcal{B}}$ is also $\mathcal{A}_2(3)$. This map has finite fibers: this just needs to be checked on the 45 divisors and we can reduce to considering the map $P \to \widetilde{\mathcal{B}}$; in the map $\widehat{\PP^3} \to \widetilde{\mathcal{B}}$, the inverse image of an exceptional divisor is 2 disjoint copies of $\PP^1 \times \PP^1$ and any surjective endomorphism of $\PP^1 \times \PP^1$ has finite fibers. The map is birational and $\mathcal{A}_2(3)$ is smooth, so, by Zariski's Main Theorem \cite[\S III.9]{mumford}, the map is an isomorphism.
\end{proof}

\begin{theorem} \label{thm:IgusaBBS}
The intrinsic torus of $\mathcal{M}_2(3) = \mathcal{B}^{\circ}$ is the dense torus $\mathbb{G}_m^{24}$
of the toric variety $\mathcal{T}$ described in Proposition \ref{prop:toricvarT}.
 The tropical compactification of $\mathcal{M}_2(3)$ provided by $\mathrm{trop}(\mathcal{B})$ is the Igusa desingularization $\widetilde{\mathcal{B}}$ of the Baily--Borel--Satake compactification $\mathcal{A}_2(3)^{\rm Sat}$ of $\mathcal{A}_2(3)$.
\end{theorem}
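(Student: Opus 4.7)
The statement contains two assertions. The intrinsic-torus part is a direct application of Lemma \ref{lem:intrinsictorusimage}: the complement of the essential arrangement $\mathrm{G}_{32}$ in $\PP^3$ is embedded in its intrinsic torus $\mathbb{G}_m^{39}$ by the Tevelev criterion recalled just before Lemma \ref{lem:intrinsictorusimage}, and the monomial map sends this torus onto the 24-dimensional dense torus of the toric variety $\mathcal{T}$ from Proposition \ref{prop:toricvarT}. Lemma \ref{lem:intrinsictorusimage} then identifies $\mathbb{G}_m^{24}$ as the intrinsic torus of $\mathcal{B}^\circ$.

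For the identification of tropical compactifications, my plan is to extend Lemma \ref{lem:partialtrop-M23} from the $45$ rays of type (b) to all $85$ rays. That lemma already identifies the (b)-partial compactification of $\mathcal{M}_2(3)$ with $\mathcal{A}_2(3)$, which sits inside $\widetilde{\mathcal{B}}$ as the complement of the $40$ exceptional divisors produced by blowing up the $40$ Satake cusps of $\mathcal{A}_2(3)^{\mathrm{Sat}}$. Thus the task reduces to matching these $40$ exceptional divisors with the $40$ rays of type (a) in $\mathrm{trop}(\mathcal{B})$ in a way that respects the higher-dimensional cones tabulated in Table~\ref{fig:tropburkorbits}.

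I would build this match by lifting to the wonderful compactification $\widetilde{\PP^3}$. By Proposition \ref{prop:CP} its boundary divisors are indexed by the rays of $\mathrm{Berg}(\mathrm{G}_{32})$, and under the linear map $A$ these rays fold via the $2$-to-$1$ identifications (a)$\sim$(\"a) and (b)$\sim$(b${}^\perp$) established in the proof of the tropical Burkhardt theorem above. The same identifications govern the generically degree-$6$ morphism $\widetilde{\PP^3} \to \widetilde{\mathcal{B}}$ in diagram (\ref{cd:oddtheta}), under which paired reflection divisors land on the same divisor of $\widetilde{\mathcal{B}}$. This parallelism should descend, through the wonderful-compactification side of (\ref{cd:oddtheta}), to a regular morphism $\overline{\mathcal{B}} \to \widetilde{\mathcal{B}}$ that extends the identity on $\mathcal{B}^\circ$ and sends the $85$ boundary divisors of $\overline{\mathcal{B}}$ bijectively to those of $\widetilde{\mathcal{B}}$.

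The main obstacle will be promoting this birational morphism to an isomorphism, which amounts to comparing the toric local model supplied by sch\"onness (Proposition \ref{prop:tropB-SNC}) with Igusa's explicit local desingularization near the (aaa) and (aab) triple points; these are the maximal cones where three boundary divisors meet, and the comparison has to be carried out for one representative of each $\mathrm{G}_{32}$-orbit, using the initial ideals listed in the proof of Proposition \ref{prop:tropB-SNC}. Once both sides are known to be smooth with matching boundary divisor configurations, and the morphism has been verified to be birational and finite on each stratum, Zariski's Main Theorem concludes the argument in the same way it ended the proof of Lemma \ref{lem:partialtrop-M23}.
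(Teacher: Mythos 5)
Your treatment of the intrinsic torus is exactly the paper's: Lemma~\ref{lem:intrinsictorusimage} applied to the arrangement complement sitting in its intrinsic torus $\mathbb{G}_m^{39}$, combined with Proposition~\ref{prop:toricvarT}(a). That half is fine. The gap is in the second half, and it sits precisely at the sentence ``this parallelism \emph{should descend} \dots{} to a regular morphism $\overline{\mathcal{B}} \to \widetilde{\mathcal{B}}$.'' Producing that morphism is the entire content of the proof, and a combinatorial matching of the $85$ rays with $85$ boundary divisors does not produce it: the birational map $\overline{\mathcal{B}} \dashrightarrow \widetilde{\mathcal{B}}$ is a priori defined only away from a codimension-two locus, and descending a morphism along the generically $6$-to-$1$ cover $\widetilde{\PP^3} \to \overline{\mathcal{B}}$ requires a genuine argument (constancy on fibres, finiteness of that cover over all boundary strata, normality of $\overline{\mathcal{B}}$, and then a graph-closure or quotient step) that you do not supply. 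The idea the proposal is missing is that Proposition~\ref{prop:tropB-SNC} together with Lemma~\ref{lem:partialtrop-M23} makes $\overline{\mathcal{B}}$ a \emph{toroidal} compactification of $\mathcal{A}_2(3)$, whence the general theory of compactifications of locally symmetric spaces \cite{borelji} provides, for free, a unique surjective morphism $f \colon \overline{\mathcal{B}} \to \mathcal{A}_2(3)^{\rm Sat}$ extending the identity; counting components ($40$ boundary divisors upstairs versus $40$ Satake boundary curves $\PP^1$) shows $f$ contracts divisors onto the blow-up centre, and the universal property of the blow-up realizing $\widetilde{\mathcal{B}}$ over $\mathcal{A}_2(3)^{\rm Sat}$ then lifts $f$ to $\tilde f \colon \overline{\mathcal{B}} \to \widetilde{\mathcal{B}}$.

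Once $\tilde f$ exists, your closing step is also heavier than needed: no comparison with Igusa's local charts at the (aaa) and (aab) cones is required. The paper simply notes that $\tilde f$ is birational and surjective, is an isomorphism over $\mathcal{A}_2(3)$, and on the complement restricts to surjections between unions of $\PP^2$'s; a surjective endomorphism of $\PP^2$ has finite fibres, so $\tilde f$ is quasi-finite, and Zariski's Main Theorem with $\widetilde{\mathcal{B}}$ smooth yields the isomorphism. So keep your ZMT ending, but the undischarged descent step and the local-chart analysis should both be replaced by the toroidal/Borel--Ji construction of the comparison morphism.
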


\begin{proof}
The first statement follows from Lemma~\ref{lem:intrinsictorusimage} and Proposition~\ref{prop:toricvarT}(a).

By Lemma~\ref{lem:partialtrop-M23}, $\overline{\mathcal{B}}$ is a compactification of $\mathcal{A}_2(3)$. The boundary of the compactification $\mathcal{M}_2(3) \subset \overline{\mathcal{B}}$ is a normal crossings divisor (Proposition~\ref{prop:tropB-SNC}), so the same is true for the boundary of  $\mathcal{A}_2(3) \subset \overline{\mathcal{B}}$, and hence it is toroidal. So there exists a map $f \colon \overline{\mathcal{B}} \to \mathcal{A}_2(3)^{\rm Sat}$ that is the identity on $\mathcal{A}_2(3)$ \cite[Proposition III.15.4(3)]{borelji}. This map is unique and surjective.

From what we said above, the Satake boundary $\mathcal{A}_2(3)^{\rm Sat} \backslash \mathcal{A}_2(3)$ has 
$40$ components all isomorphic to $\PP^1$. Also, $\overline{\mathcal{B}} \backslash \mathcal{A}_2(3)$ consists of 
$40$ divisors. Hence the map $f$ contracts the
$40$ divisors to these $\PP^1$'s. By the universal property of blow-ups, there is a unique map $\tilde{f} \colon \overline{\mathcal{B}} \to \widetilde{\mathcal{B}}$ that commutes with the blow-up map. Then $\tilde{f}$ is birational and surjective. We know this map is an isomorphism on $\mathcal{A}_2(3)$ and the complement of this open subset in both domain and target are a union of $\PP^2$'s. Any surjective endomorphism of $\PP^2$ has finite fibers, and hence $\tilde{f}$ is an isomorphism by Zariski's Main Theorem \cite[\S III.9]{mumford} since $\widetilde{\mathcal{B}}$ is smooth.
\end{proof}

\section{Moduli of Genus Two Curves}
\label{sec:genus2moduli}

The moduli space $\mathcal{M}_{g,n}^{\rm tr}$ of tropical curves of genus $g$ with $n$ marked points is a {\it stacky fan}. This was shown by
Brannetti, Melo and Viviani \cite{BMV} and Chan \cite{Cha}.
This space was studied by many authors. See \cite{caporaso1, caporaso2} for some results.
Here, a {\it tropical curve} is a triple $(\Gamma,w,\ell)$, 
where $\Gamma =(V,E)$ is a connected graph, 
$w$ is a weight function $V\to \mathbb{Z}_{\geq{}0}$, 
and $\ell$ is a length function $E\to \mathbb{R}_{\geq{}0}$. 
The {\it genus} of a tropical curve is the sum of weights of all vertices plus the genus of the graph $\Gamma{}$. 
In addition to identifications induced by graph automorphisms, 
two tropical curves are isomorphic if one can be obtained from another 
by a sequence of the following operations and their inverses:
\begin{compactitem}
\item Removing a leaf of weight $0$, together with the only edge connected to it.
\item Removing a vertex of degree $2$ of weight $0$, and replacing the two edges connected to it with an edge whose length is the sum of the two old edges.
\item Removing an edge of length $0$, and combining the two vertices connected by that edge. The weight of the new vertex is the sum of the two old vertices.
\end{compactitem}
In this way, every tropical curve of genus $\geq{}2$ is uniquely represented by a {\it minimal skeleton}, 
i.e., a tropical curve with no vertices of weight $0$ of degree $\leq{}2$ or edges of length $0$. 
The moduli space of tropical curves with a fixed {\em combinatorial type} $(\Gamma{},w)$ is $\mathbb{R}_{>0}^{|E|}/\mathrm{Aut}(\Gamma{})$, 
where the coordinates of $\mathbb{R}_{>0}^{|E|}$ are the lengths of the edges. 
The cones for all combinatorial types are glued together to form $\mathcal{M}_g^{\rm tr}$.
The boundary of the cone of a combinatorial type $(\Gamma,w)$
corresponds to tropical curves with at least one edge of length $0$.
Contracting that edge gives a combinatorial type $(\Gamma',w')$. 
Then, the cone for $(\Gamma{}',w')$ is glued along the boundary of the cone for $(\Gamma{},w)$ in the natural way.
More generally, a tropical curve with {\it marked points} is defined similarly, 
but allowing rays connecting a vertex with leaves ``at infinity''.

The following construction maps curves over a valued field to tropical curves.
It is fundamental for \cite{ACP, BPR}.
Our description follows  \cite[Lemma - Definition 2.2.4]{viviani}.
Let $R$ be a complete discrete valuation ring with maximal ideal $\mathfrak{m}$. Let $K$ be its field of fractions, $k = R / \mathfrak{m}$ its residue field, and  $t \in R$ a uniformizing parameter.
Fix a genus $g$ curve $C$ with $n$ marked points over $K$.
The curve $C$ is a morphism ${\rm Spec}\ K \to \mathcal{M}_{g,n}$. Since the stack $\overline{\mathcal{M}}_{g,n}$ is proper (i.e., by the stable reduction theorem), there is a finite extension $K'$ of $K$ with discrete valuation ring $R'$ such that this morphism extends uniquely to a morphism ${\rm Spec}\ R' \to \overline{\mathcal{M}}_{g,n}$ (we call this a {\it stable model} of $C$). Here we renormalize the valuation on $R'$ so that its value group is $\mathbb{Z}$. Reducing modulo $\mathfrak{m}'$ gives us a point ${\rm Spec}\ k \to \overline{\mathcal{M}}_{g,n}$. By definition, this is a {\em stable curve} $C_k$ over $k$. We remark that the stable model may not be unique, but the stable curve is unique. 
Since such a stable curve has at worst nodal singularities,  we can construct a dual graph as follows. 
For each genus $h$ component of $C_k$, we draw a vertex of weight $h$.
For each node of $C_k$, we draw an edge between the two components that meet there (this might be a loop if the node comes from a self-intersection). If a component has a marked point, then we attach a vertex at infinity to that vertex. The stable condition translates to the fact that the dual graph is a minimal skeleton as above. Finally, each node, when considered as a point in $C_{R'}$, is \'etale locally of the form $xy = t^\ell$ for some positive integer $\ell$. We then assign the length $\ell / d$ to the corresponding edge, where $d$ is the degree of the field extension $K \subset K'$. In this way, we have defined a function 
\begin{equation}
\label{eq:classicaltotropical}
\mathcal{M}_{g,n}(K) \to \mathcal{M}_{g,n}^{\rm tr}.
\end{equation}
Here is a concrete illustration of this function for $g = 0$ and $n=4$.

\begin{example} \label{ex:stevens4points} \rm
Let $K = \C(\!(t)\!)$ and $R = \C[\![t]\!]$ and consider the four points in $\PP^1_K$ given by 
$(1\!:\!p(t)), (1\!:\!q(t)), (1\!:\!a), (1\!:\!b)$ where $a, b \in \C$ are generic and ${\rm val}(p(t)), {\rm val}(q(t)) > 0$. Let $x,y$ be the coordinates on $\PP^1$. Naively, this gives us four points in $\PP^1_R$, but it is not a stable model since $p(0) = q(0) = 0$ and so two points coincide in the special fiber. 
The fix is to blow-up the arithmetic surface $\PP^1_R$ at the ideal $\langle y-p(t)x, y-q(t)x \rangle$. We embed this blow-up into $\PP^1_R \times_R \PP^1_R$, where the latter $\PP^1_R$ has coordinates $w,z$, as the hypersurface given by $w(y-q(t)x) = z(y-p(t)x)$. The special fiber is the nodal curve given by $y(w-z) = 0$. We wish to understand the \'etale local equation for the node cut out by $y=w-z=0$. To do this, set $x=z=1$ and consider the defining equation $y(w-1) + p(t)-q(t)w=0$. Now substitute $w' = w-1$ and $y' = y-q(t)$ to get $y'w' + (p(t)-q(t)) = 0$. Hence the dual curve is a line segment of length  ${\rm val}(p(t)-q(t))$
with both vertices having weight $0$.
\qed
\end{example}

Evaluating the map (\ref{eq:classicaltotropical}) in general is a challenging 
computer algebra problem:
how does one compute the metric graph from a smooth curve $C$ that
is given by explicit polynomial equations over $K$?
This section represents a contribution to this problem for curves of genus $2$.
As a warm-up for our study of genus 2 curves, let us first consider the genus 1 case.
 
\begin{example} \label{ex:bernds4points} \rm
An elliptic curve $C$ can be defined by giving
four points in $\PP^1$. The curve is the double cover
of $\PP^1$ branched at those four points.
This gives us a map $\mathcal{M}_{0,4} \rightarrow \mathcal{M}_1$,
which is well-defined over our field $K$.
The map is given explicitly
by the following formula for the j-invariant of  $C$ in terms of the cross ratio
 $\lambda{}$ of four ramification points (see \cite[\S 3]{Tev2}):
\begin{equation}
\label{eq:jinv}
j \,\,=\,\, 256\frac{(\lambda{}^2-\lambda{}+1)^3}{\lambda{}^2(\lambda{}-1)^2}.
\end{equation}
We now pass to the tropicalization by constructing a commutative square
\begin{equation} \label{eqn:jinv-comm}
\begin{diagram}
\mathcal{M}_{0,4}(K) & \rTo  & \mathcal{M}_{0,4}^{\rm tr} \\
\dTo & & \dTo \\
\mathcal{M}_1(K) & \rTo & \mathcal{M}_1^{\rm tr}
\end{diagram}
\end{equation}
The horizontal maps are instances of
(\ref{eq:classicaltotropical}), and
the left vertical map is (\ref{eq:jinv}).
Our task is to define the right vertical map.
The ingredients are
the trees and tropical curves in Table~\ref{table:genus1}:

\begin{table}[h!]
\centering
\begin{tabular}{|c|c|}
\cline{1-2}
Tropical curve of genus $1$ & Tree with $4$ leaves\\
\cline{1-2}
\includegraphics{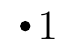} & \includegraphics{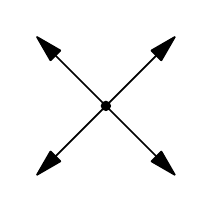} \\
\cline{1-2}
\includegraphics{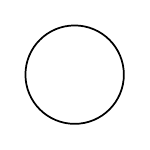} & \includegraphics{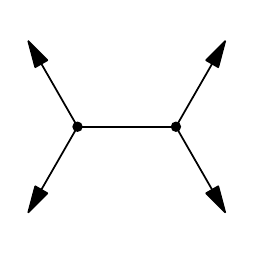} \\
\cline{1-2}
\end{tabular}
\label{table:genus1}
\caption{Trees on four taxa and tropical curves of genus 1}
\end{table}

A point in $\mathcal{M}_{0,4}^{\rm tr}$ can be represented by
 a phylogenetic tree
with taxa $1,2,3,4$.   Writing $\nu_{ij}$ for half the
distance from leaf $i$ to leaf $j$ in that tree, the unique
 interior edge has length
$$ \ell \,\, = \,\, 
{\rm max} \bigl\{
\nu_{12}+\nu_{34} - \nu_{14} - \nu_{23},\,
  \nu_{13}+\nu_{24} - \nu_{12} - \nu_{34},\,
\nu_{14}+\nu_{23} - \nu_{13} - \nu_{24} \bigr\}. $$
Suppose we represent a point in $\mathcal{M}_{0,4}$
by four scalars, $x_1,x_2,x_3, x_4 \in K$, as in Example~\ref{ex:stevens4points}. 
Then its image in
$\mathcal{M}_{0,4}^{\rm tr}$ is the phylogenetic tree obtained by setting
\begin{equation}
\label{eq:nu}  \nu_{ij}  = - {\rm val} (x_i - x_j) . 
\end{equation}

The square \eqref{eqn:jinv-comm} becomes commutative if the right vertical map takes trees with interior edge length $\ell > 0$ to the
  cycle of length $2 \ell$, and it takes
the star tree  $(\ell = 0)$
to the node marked $1$.
To see this, we recall that
the tropical curve contains a cycle
of length $- {\rm val}(j)$, where $j$ is the j-invariant.
This is a standard fact (see \cite[\S 7]{BPR})
from the theory of elliptic curves over $K$.
Suppose the four given points in $\PP^1$
  are $0,1, \infty,\lambda$, and that $\lambda$ and $0$ are neighbors in the tree. This means $\mathrm{val}(\lambda)>0$. As desired, the length of our cycle is
  \begin{align*}
-\mathrm{val}(j) &= -3\mathrm{val}(\lambda^2-\lambda+1)+2\mathrm{val}(\lambda{})+2\mathrm{val}(\lambda{}-1) \,=\, 0+2\mathrm{val}(\lambda{})+0\,=\,
2 \mathrm{val}(\lambda{}).
\end{align*}
The other case, when $\lambda $ and $0$ are not neighbors in the tree,
follows from the fact that the rational function of $\lambda$ in (\ref{eq:jinv})
is invariant under permuting the four ramification points. 

In this example, the one-dimensional fan $\mathcal{M}_{0,4}^{\rm tr}$ serves
as a moduli space for tropical elliptic curves. A variant where the fibers
are elliptic normal curves is shown in Figure~\ref{fig:pentagonpentagram}.
In both situations, all maximal cones
correspond to elliptic curves over $K$ with bad reduction. 
\qed
\end{example}

Moving on to genus $2$ curves, we shall now focus on 
the tropical spaces $\mathcal{M}_2^{\rm tr}$ and $\mathcal{M}_{0,6}^{\rm tr}$. 
There are seven combinatorial types for genus $2$ tropical curves.
Their poset is shown in \cite[Figure 4]{Cha}.
The seven types are drawn in the second column of Table~\ref{table-tropical-moduli}.
The stacky fan $\mathcal{M}_2^{\rm tr}$ is the cone over the
two-dimensional cell complex shown in Figure~\ref{figure-tropical-moduli}. 
Note the identifications.

\begin{figure}[h]
\begin{center}
\includegraphics[width=0.35\textwidth]{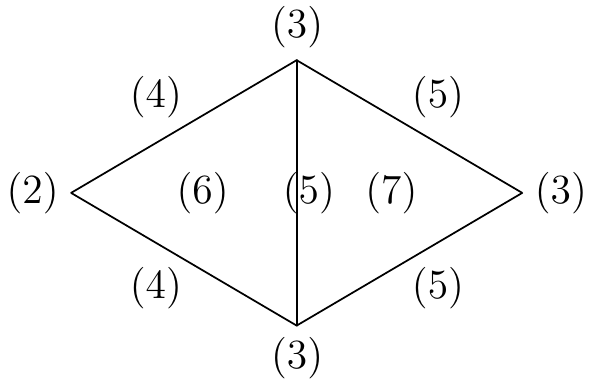}
\end{center}
\vspace{-0.3in}
\label{figure-tropical-moduli}
\caption{The moduli space of genus $2$ tropical curves}
\end{figure}

The tropical moduli space $\mathcal{M}_{0,6}^{\rm tr}$ is the space
of phylogenetic trees on six taxa.
A concrete model, embedded in $\mathbb{TP}^{14}$,
is the $3$-dimensional fan ${\rm trop}(\mathcal{M}_{0,6})$
seen in Theorem \ref{thm:tropsegrecubic}.
Combinatorially, it agrees with the tropical Grassmannian ${\rm Gr}(2,6)$ 
as described in \cite[Example 4.1]{SS}, so its cones
correspond to trees with six leaves.
The fan $\mathcal{M}_{0,6}^{\rm tr}$ has one zero-dimensional cone of type (1),
$25 = 10+15 $ rays of types (2) and (3),
$105 = 60+45$ two-dimensional cones of types (4) and (5),
and $105 = 90 + 15$ three-dimensional cones of types (6) and (7).
The corresponding combinatorial types of  trees are 
depicted in the last column of Table~\ref{table-tropical-moduli}.

Table~\ref{table-tropical-moduli} shows that there is a combinatorial correspondence
between the types of cones of the tropical Burkhardt quartic
${\rm trop}(\mathcal{B}) $ in Table~\ref{fig:tropburkorbits} and the types of cones in
$\mathcal{M}_2^{\rm tr}$ and $\mathcal{M}_{0,6}^{\rm tr}$.
We seek to give a precise explanation of this correspondence
in terms of algebraic geometry. At the moment we can carry
this out for level $2$ but we do not yet have a proof for level $3$.

\begin{table}
\centering
\begin{tabular}{|c|c|c|c|}
\cline{1-4}
Label & Tropical curve of genus $2$ & Burkhardt cone & Tree with $6$ leaves \\
\cline{1-4}
(1) & \includegraphics{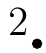} & origin & \includegraphics{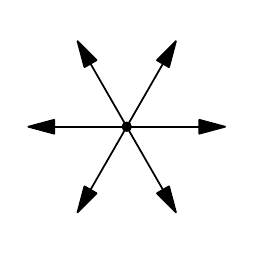} \\
\cline{1-4}
(2) & \includegraphics{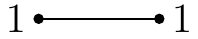} & (b) & \includegraphics{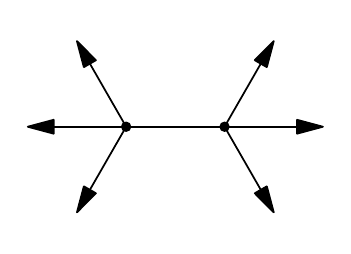} \\
\cline{1-4}
(3) & \includegraphics{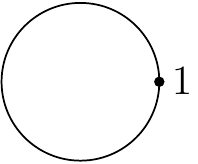} & (a) & \includegraphics{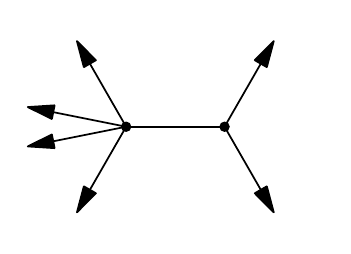} \\
\cline{1-4}
(4) & \includegraphics{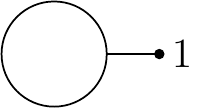} & (ab) & \includegraphics{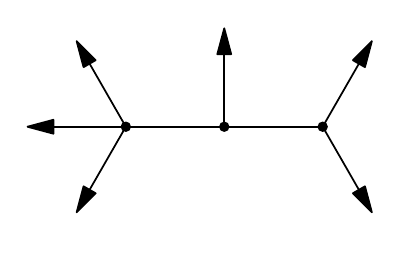} \\
\cline{1-4}
(5) & \includegraphics{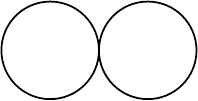} & (aa) & \includegraphics{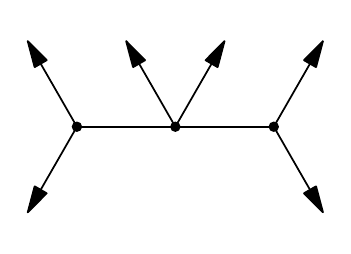} \\
\cline{1-4}
(6) & \includegraphics{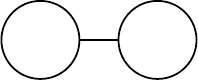} & (aab) & \includegraphics{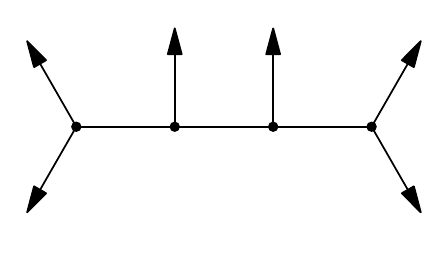} \\
\cline{1-4}
(7) & \includegraphics{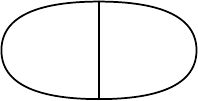} & (aaa) & \includegraphics{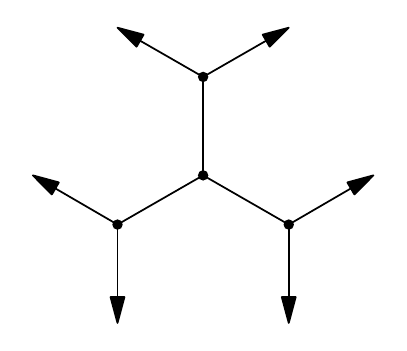} \\
\cline{1-4}
\end{tabular}
\label{table-tropical-moduli}
\caption{Correspondence between tropical curves,  cones of ${\rm trop}(\mathcal{B})$,
and metric trees.}
\end{table}

\begin{theorem} \label{thm:level2trop}
Let $K$ be a complete nonarchimedean field. 
\begin{compactenum}[\rm (a)]
\item There is a commutative square
\begin{equation}
\begin{diagram}
\mathcal{M}_{0,6}(K) & \rTo  & \mathcal{M}_{0,6}^{\rm tr} \\
\dTo & & \dTo \\
\mathcal{M}_2(K) & \rTo & \mathcal{M}_2^{\rm tr}
\end{diagram}
\end{equation}
The left vertical map sends $6$ points in $\mathbb{P}^1$ to the genus $2$ hyperelliptic curve with these ramification points. The horizontal maps send a curve (with or without marked points) to its tropical curve (with or without leaves at infinity). The right vertical map is a morphism of generalized cone complexes 
relating the second and fourth columns of Table~\ref{table-tropical-moduli}.
\item The top horizontal map can be described in an alternative way: under the embedding of
$\mathcal{M}_{0,6} $ into $ \PP^{14}$ given by \eqref{eq:segremap}, take the valuations of the $15$ coordinates $m_0,m_1,\ldots,m_{14}$.
\end{compactenum}
\end{theorem}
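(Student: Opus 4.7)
The plan is to establish (b) first, as it provides a concrete description of the top horizontal map, and then to deduce (a) by combining (b) with an analysis of the stable reduction of hyperelliptic curves.

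For (b), the essential input is Proposition~\ref{prop:identification}, which identifies $\mathcal{M}_{0,6}$ with the open Segre cubic $\mathcal{S}^{\circ} \subset \mathbb{G}_m^{14}$ via the monomial map \eqref{eq:segremap}. Lemma~\ref{lem:intrinsictorusimage}, together with the observation that the $15$ boundary divisors $\mathcal{S} \cap \{m_i = 0\}$ are irreducible, shows that $\mathbb{G}_m^{14}$ is the intrinsic torus of $\mathcal{S}^{\circ}$; and the corollary immediately following that lemma identifies the resulting tropical compactification with $\overline{\mathcal{M}}_{0,6}$. With these facts in place, the plan is to invoke the general principle (cf.~\cite{ACP}, \cite{Tev}) that whenever a moduli space $U$ is embedded in its intrinsic torus and the resulting tropical compactification is the stable/Deligne--Mumford compactification, the map $U(K) \to U^{\mathrm{tr}}$ of \eqref{eq:classicaltotropical} is computed by coordinate-wise valuation on the embedded $U$. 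Applied to $U = \mathcal{M}_{0,6}$, this immediately yields (b).

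For (a), I would first define the right vertical map $\mathcal{M}_{0,6}^{\rm tr} \to \mathcal{M}_2^{\rm tr}$ combinatorially following the second and fourth columns of Table~\ref{table-tropical-moduli}: on each cone of $\mathcal{M}_{0,6}^{\rm tr}$, the map reads off a combinatorial type of genus~$2$ tropical curve, and expresses its edge lengths as explicit linear functions of the tree edge lengths (typically multiplication by $2$, reflecting the degree of the hyperelliptic double cover). A small bookkeeping argument, handling each of the seven rows of the table, shows this assignment is compatible with edge contractions on both sides and hence defines a morphism of generalized cone complexes that respects the identifications in Figure~\ref{figure-tropical-moduli}.

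To verify commutativity, I would fix $6$ points $x_1,\ldots,x_6 \in \PP^1(K)$ in general position with associated tree $T$ defined by \eqref{eq:nu}, and compute the stable model of the hyperelliptic curve $y^2 = \prod_{i=1}^6 (x-x_i)$ via iterated blow-ups of the arithmetic surface $\PP^1_R$ at the ideals encoding coincident branch points (as in Example~\ref{ex:stevens4points}), followed by normalization of the double cover. Each internal vertex of $T$ yields a $\PP^1$-component whose double cover has genus determined by Riemann--Hurwitz (according to the parity of the number of Weierstrass sections meeting the component), and each internal edge of $T$ of length $\ell$ yields either a single edge of length $2\ell$ or a pair of edges of length $\ell$, exactly as in Example~\ref{ex:bernds4points}. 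A case-by-case comparison against the seven rows of Table~\ref{table-tropical-moduli} completes~(a).

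The main obstacle is the case analysis for the top-dimensional tree types (6) and (7): one must verify that the resulting dumbbell and $\Theta$-graph appear with the correct edge lengths, and that contractions of internal edges of $T$ correspond correctly to contractions (and to the non-trivial identifications on the boundary of $\mathcal{M}_2^{\rm tr}$) on the genus~$2$ side. A more conceptual alternative would be to tropicalize the universal family of Kummer surfaces over $\overline{\mathcal{M}}_{0,6}$ and reduce commutativity to a statement about how a tropical Kummer surface recovers the dual graph of its underlying abelian surface, but this approach mostly relocates rather than avoids the combinatorial case analysis.
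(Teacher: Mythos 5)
Your overall architecture is reasonable, and your treatment of the unramified nodes (even splits) is essentially the paper's: an internal edge of $T$ of length $\ell$ separating an even number of branch points from the rest gives two nodes upstairs, each \'etale-locally $xy=t^{\ell}$, and after contracting unstable components these contribute a cycle of length $2\ell$, exactly as in Example~\ref{ex:bernds4points}. But your edge-length dichotomy --- ``a single edge of length $2\ell$ or a pair of edges of length $\ell$'' --- omits the one genuinely new phenomenon in genus $2$. When the internal edge separates the six branch points into two sets of three (tree type (2) in Table~\ref{table-tropical-moduli}, and the middle edge of the caterpillar type (6)), the monodromy of the double cover around the node is $(-1)^3=-1$, so the hyperelliptic cover is \emph{ramified} at the node. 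There is then a single node upstairs, and if it is \'etale-locally $xy=t^{m}$, the hyperelliptic involution negates $x$ and $y$, so the invariant ring is $R'[\![u,v]\!]/\langle uv-t^{2m}\rangle$ with $u=x^2$, $v=y^2$: the downstairs tree edge has length $2m$ while the upstairs edge has length $m$. These edge lengths are \emph{halved}, not doubled. With your rule (``typically multiplication by $2$'') the square already fails to commute on the ray of type (2), and on the caterpillar cone the bridge of the dumbbell would come out wrong by a factor of $4$. The paper's proof of (a) consists precisely of these two local computations --- halving for type (2), doubling for type (3) --- plus the reduction of the remaining types to them, so this is the core of the argument rather than a bookkeeping detail.

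A secondary remark on (b): the ``general principle'' you invoke --- that once $U$ sits in its intrinsic torus and its tropical compactification is the Deligne--Mumford compactification, coordinatewise valuation computes the map \eqref{eq:classicaltotropical} --- is true in spirit (it is the comparison of the tropical fan with the skeleton of the toroidal compactification), but the identification of the two integral affine structures, i.e.\ the statement that ${\rm val}(m_i)$ recovers the actual edge lengths and not merely the combinatorial type of the tree, is exactly what needs to be checked. The paper does this by a direct computation relating ${\rm val}(m_i)$ to the quantities $\nu_{ij}=-{\rm val}(x_i-x_j)$ on the snowflake cone, as in \eqref{eq:internaledges}; some such computation, or a citation that pins down the lattice normalizations on both sides, is needed to close part (b).
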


\begin{proof}[Proof of Theorem~\ref{thm:level2trop}]
We start with (a). Let $C$ be a genus $2$ curve over $K$ and let $p_1, \dots, p_6 \in \PP^1_K$ be the 
 branch points of the double cover $C \to \PP^1$ 
induced from the canonical divisor. Let $R'$ be a discrete valuation ring over which a stable model of both $C$ and $(\PP^1, p_1, \dots, p_6)$ can be defined and let $k$ be its residue field. The fact that the combinatorial types of dual graphs for $C$ and the marked curve $(\PP^1, p_1, \dots, p_6)$ match up as in Table~\ref{table-tropical-moduli} is clear from the proof of \cite[Corollary 2.5]{avritzer} which constructs the stable $k$-curve of $C$ from that of $(\PP^1, p_1, \dots, p_6)$. There is an obvious bijection of edges between the combinatorial types in all cases. We claim that the edge length coming from the \'etale neighborhood of nodal singularities is halved for curves of type (2) and is doubled for curves of type (3) from Table~\ref{table-tropical-moduli}: the description and proof for the other types can be reduced to these two cases.

First consider curves of type (3). Our stable genus $0$ curve consists of the union of two $\PP^1$'s meeting in a point. One has $4$ marked points and the other has $2$ marked points. This arises from $6$ distinct points in $\PP^1_{R'}$ such that exactly $2$ of them coincide after passing to the residue field. To build a stable model (cf. Example~\ref{ex:stevens4points}), we blow up the point of intersection in the special fiber of $\PP^1_{R'}$ to get an arithmetic surface $\tilde{P}_{R'}$. Let $E$ be the double cover of the first $\PP^1_k$ along the $4$ marked points, and let $E'$ be a copy of $\PP^1_k$ mapping to the second $\PP^1_k$ so that it is ramified over the $2$ marked points. Over the point of intersection, both $E$ and $E'$ are unramified, and we glue together the two preimages (there are two ways to do this, but the choice won't matter). Then $E \cup E'$ is a semistable (but not stable) curve which is the special fiber of an admissible double cover $C_{R'} \to \tilde{P}_{R'}$. Suppose that the node in the special fiber of $\tilde{P}_{R'}$ \'etale locally is $xy = t^\ell$. In a small neighborhood of this node, there are no ramification points.
Thus, a small neighborhood of each of these two points of intersection in $C_{R'}$ is isomorphic to a small neighborhood of the node in $\tilde{P}_{R'}$ and hence \'etale locally look like $xy = t^{\ell}$. Finally, we have to contract $E'$ to a single point to get a stable curve. The result is that the two nodes become one which \'etale locally looks like $xy = t^{2\ell}$.

Now consider curves of type (2). Use the notation from the previous case. The semistable model $C_{R'}$ over ${\rm Spec}\ R'$ has a hyperelliptic involution whose quotient is the union of two $\PP^1_{R'}$'s. At the node of $C_{R'}$, which locally looks like $R'[\![x,y]\!]/\langle xy-t^m \rangle$ 
for some $m$, the involution negates $x$ and $y$ since it preserves the two components of $C_{R'}$. The ring of invariants
 is $R'[\![u,v]\!]/\langle uv-t^{2m} \rangle $ where $u{=}x^2$, $v{=}y^2$. This is the local picture for the nodal genus $0$~curve. 

The result above can also be deduced from Caporaso's
general theory in \cite[\S 2]{Cap}.
For a combinatorial illustration of type (6) see Chan's Figure 1 in \cite{Cha2}.
The two leftmost and two rightmost edges in her upstairs graph have been contracted away.
What is left is a ``barbell" graph with five horizontal edges of  lengths $a, a, b, c, c$,   mapping harmonically to a downstairs graph of edge lengths $a, 2b, c$.  
Here we see both of the stretching factors represented in different parts of this harmonic morphism: 
a $2$-edge cycle of total length $a+a$ maps to an edge of length $a$, and a single edge of length $b$ maps to an edge of length
$ 2b$ downstairs. 

\smallskip

Now we consider (b). We need to argue that the
internal edge lengths can be computed from the $15$ quantities
${\rm val}(m_i)$, in a manner that is consistent with the description above.
For genus 1 curves this is precisely the consistency between Examples~\ref{ex:stevens4points}
and \ref{ex:bernds4points}.
We explain this for the case of the snowflake tree (7).
Without loss of generality, we assume that
$\{1,2\}$, $\{3,4\}$ and $\{5,6\}$ are the
neighbors on the tree. If  $\nu_{ij}$ is
half the distance between leaves $i$ and $j$,
computed from the six points as in (\ref{eq:nu}), then, for instance,
$\,{\rm val}(m_{13}) =  -\nu_{16} - \nu_{24} - \nu_{35} $.
A direct calculation on the snowflake tree shows that the
three internal edge lengths are
\begin{equation}
\label{eq:internaledges}
{\rm val}( m_{2})-{\rm val}(m_{13})  ,\,
{\rm val}(m_{6})-{\rm val}(m_{13}), \,\,\hbox{and} \,\,\,
{\rm val}(m_{14})- {\rm val}(m_{13}) . 
\end{equation}
The edge lengths of the tropical curve $\,$
\includegraphics[width=0.05\textwidth]{figure7} $\,$
are gotten by doubling these numbers.
\end{proof}

At present we do not know the level $3$ analogues to the stretching factors $1/2$ and $2$ we saw in the proof above. Such lattice issues will play a role for the natural map from the tropical Burkhardt quartic onto the tropical moduli space $\mathcal{M}_2^{\rm tr}$. We leave that for future research:

\begin{conjecture}
\label{conj:tropmod}
Let $K$ be a complete nonarchimedean field. There is a commutative square
\begin{equation}
\begin{diagram}
\mathcal{M}_2(3)(K) & \rTo  & \mathrm{trop}(\mathcal{B}) \\
\dTo                       &        & \dTo \\
\mathcal{M}_2(K)    & \rTo & \mathcal{M}_2^{\rm tr}
\end{diagram}
\end{equation}
The left map is the forgetful map. The top map is taking valuations of the coordinates $\,m_0,\dotsc{},m_{39}$. The bottom map sends a curve to its tropical curve. The right map is a morphism of (stacky) fans that takes the third column of Table~\ref{table-tropical-moduli} to the second column.
\end{conjecture}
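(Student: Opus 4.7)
The plan is to construct the right vertical map cone by cone using the correspondence already established in Table~\ref{table-tropical-moduli}, and then verify commutativity by factoring through the Jacobian and combining nonarchimedean uniformization with the tropical Torelli theorem in genus two.

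First I would define the map $\mathrm{trop}(\mathcal{B}) \to \mathcal{M}_2^{\rm tr}$ on each of the six cone types appearing in Table~\ref{fig:tropburkorbits}, producing explicit linear formulas analogous to (\ref{eq:internaledges}): the length of each edge of the target tropical curve should be a $\Z$-linear combination of the $m$-coordinate valuations, depending only on the combinatorial type of the cone. To find these formulas, I would work directly with the initial ideals $\mathrm{in}_v(I_{\mathcal{B}})$ exhibited in the proof of Proposition~\ref{prop:tropB-SNC}. Each such degeneration corresponds, via Theorem~\ref{thm:IgusaBBS}, to a particular boundary stratum of the Igusa desingularization; Mumford's construction of the universal semi-abelian degeneration over that stratum identifies the boundary divisor with a moduli space whose coordinates are the character-lattice entries of the toric part, and these in turn match the edge lengths of the dual graph via tropical Torelli.

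For commutativity, the strategy is to route the argument through the Jacobian. A point of $\mathcal{M}_2(3)(K)$ is a curve $C$ with level $3$ structure, and $C \mapsto \mathrm{Jac}(C)$ identifies $\mathcal{M}_2(3)$ with an open subvariety of $\mathcal{A}_2(3) \subset \mathcal{B}^{\circ}$. By Corollary~\ref{cor:coblecubic}, each coordinate $m_i$ is (up to the linear relations defining $\mathcal{B} \subset \mathcal{T}$) a degree-four monomial in third-order theta constants of $\mathrm{Jac}(C)$, so the vector $(\mathrm{val}(m_0), \dots, \mathrm{val}(m_{39}))$ is a $\Z$-linear function of the tropical period matrix of $\mathrm{Jac}(C)$, determined by the Fourier expansion of the $\Theta_3[\sigma]$ at the relevant cusp. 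By Baker--Payne--Rabinoff \cite{BPR}, the tropicalization of $\mathrm{Jac}(C)$ equals the tropical Jacobian of the tropicalization of $C$, and the genus-two tropical Torelli theorem of Caporaso--Viviani (see also \cite{BMV, Cha}) reconstructs the tropical curve from its tropical Jacobian. Composing these identifications with the right vertical map I have just defined yields precisely the dual graph of the stable reduction of $C$, proving commutativity.

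The main obstacle, and the reason the statement remains conjectural, is the bookkeeping of lattices needed to produce the correct stretching factors cone by cone. Three lattices must be compared: the character lattice of the intrinsic torus $\mathbb{G}_m^{24}$ of $\mathcal{B}^\circ$ provided by Proposition~\ref{prop:toricvarT}(a), the lattice $\mathrm{Sym}^2(\Z^2)$ of integral tropical period matrices, and the edge-length lattice of each combinatorial type in Table~\ref{table-tropical-moduli}. The covering $\mathcal{A}_2(3) \to \mathcal{A}_2$ has degree $25920 = |\mathrm{PSp}_4(\mathbb{F}_3)|$, and its local monodromy at each boundary stratum contributes ramification indices that deform the naive linear formulas by rational stretching factors. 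In the level $2$ case of Theorem~\ref{thm:level2trop} these factors turned out to be $\tfrac{1}{2}$ and $2$, and could be computed by hand because of the fortuitous identification $\mathcal{M}_2(2) = \mathcal{M}_{0,6}$ which replaces abelian surfaces by six points on $\mathbb{P}^1$. No such combinatorial reduction is available for level $3$, so the stretching factors must be extracted directly from the theta-function expansions, cusp by cusp, and from the comparison between the linear map $A$ of Section~\ref{sec:tropicalization} and the classical period map on $\mathfrak{H}_2$. Carrying out this last bookkeeping rigorously for all six cone types is the core technical task that the proof reduces to.
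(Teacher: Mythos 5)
This statement is a conjecture in the paper: the authors give no proof, and immediately before stating it they explain why --- the level $3$ analogues of the stretching factors $1/2$ and $2$ from the proof of Theorem~\ref{thm:level2trop} are unknown, and ``we leave that for future research.'' Your proposal is an outline of a strategy, not a proof, and you concede this yourself in the final paragraph by identifying the cone-by-cone lattice bookkeeping as ``the core technical task that the proof reduces to.'' That task is precisely the open content of the conjecture, so the proposal does not close the gap; it restates it. There is no proof in the paper to compare against, and no proof in your write-up either.

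Beyond that, one step of your commutativity argument is actually wrong as stated. You propose to route through the Jacobian and then invoke ``the genus-two tropical Torelli theorem'' to ``reconstruct the tropical curve from its tropical Jacobian.'' But the tropical Torelli map is not injective in genus $2$: the tropical period matrix of a metric graph is built from its cycle space and is blind to separating edges, so it contracts every bridge to length zero. Exactly the combinatorial types in Table~\ref{table-tropical-moduli} that contain a bridge --- types (2), (4) and (6), e.g.\ the dumbbell --- have an edge length that cannot be recovered from the tropicalization of $\mathrm{Jac}(C)$ as a principally polarized tropical abelian variety. (This is the content of the Caporaso--Viviani analysis you cite, which describes the positive-dimensional fibers of the Torelli map over such types, rather than giving injectivity.) So even granting all the lattice bookkeeping, the composite ``theta valuations $\to$ tropical period matrix $\to$ tropical curve'' cannot produce the bottom-right corner of the square for those cones; you would need to show directly that the valuations of the $40$ coordinates $m_i$ carry strictly more information than the tropical period matrix --- for instance, that they detect the distance to the locus of decomposable abelian surfaces --- and that this extra information computes the bridge lengths. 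That is an additional missing idea, not merely a normalization to be checked.
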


Here is one concrete way to evaluate the left vertical map $\mathcal{M}_2(3) \to \mathcal{M}_2$
over a field $K$. We can represent an element of $\mathcal{M}_2(3)$ by
a point  $(r:s_{01}:s_{10}:s_{11}:s_{12}) \in \PP^4_K$
that lies in the open Burkhardt quartic
$\mathcal{B}^{\circ}$. The corresponding abelian surface $S$ is
the singular locus of the Coble cubic in $\PP^8_K$ by Theorem 
\ref{thm:abelian93}. If we intersect the abelian surface $S$ with the
linear subspace $\PP^4$ given by (\ref{eq:ranktrica}), then
the result is the desired genus $2$ curve $C \in \mathcal{M}_2(K)$.
The conjecture asks about the precise relationship between 
  the tropical curve constructed from $C$ 
  and  the valuations of our $40$ 
  canonical coordinates $m_0,m_1, \ldots, m_{39}$ 
  on $\mathcal{B}^{\circ} $ inside $ \PP^{39}_K$.

\section{Marked Del Pezzo surfaces}
\label{sec:delpezzo}

This section is motivated by our desire to draw all combinatorial types of tropical cubic surfaces together with their $27$ lines (trees). These surfaces arise in fibers of the map from a six-dimensional fan to a four-dimensional fan. These tropical moduli spaces were characterized by Hacking, Keel and Tevelev in \cite{HKT}. We now rederive their fans from first principles.

Consider a reflection arrangement of type $\mathrm{E}_n$ for $n=6,7$. The complement of the hyperplanes is the moduli space of $n$ points in $\PP^2$ in general position (no 2 coincide, no 3 are collinear, no 6 lie on a conic)
 together with a cuspidal cubic through these points (none of which is the cusp). For $n=6$, there is a $1$-dimensional family of such curves (this family is the {\it parabolic curve} in \cite[Definition 3.2]{CGL}).
 For $n=7$ there are $24$ choices.  We can use maps (\ref{eq:twomaps}) that come from Macdonald representations to forget the data of the cuspidal~cubic.

Consider the case $n=6$. Six points on a cuspidal cubic in $\PP^2$ 
 are represented by a matrix
\begin{equation}
\label{eq:threebysix}
D \,\,\,= \,\,\,
\begin{pmatrix} 1 & 1 & 1 & 1 & 1 & 1 \\ d_1 & d_2 & d_3 & d_4 & d_5 & d_6  \\\
 d_1^3 & d_2^3 & d_3^3 & d_4^3 & d_5^3 & d_6^3 \end{pmatrix}.
\end{equation}
The maximal minors of this $3 \times 6$-matrix are denoted
\[
[ijk] \quad = \quad (d_i-d_j)(d_i-d_k)(d_j-d_k) (d_i+d_j+d_k) 
\qquad {\rm for} \quad 1 \leq i < j < k \leq 6.
\]
We also abbreviate the condition for the six points to lie on a conic:
\[
[{\rm conic}] \, = \,
[134][156][235][246] -  [135][146][234][256] \,\,= \,\,
(d_1+d_2+d_3+d_4+d_5+d_6) \!\! \prod_{1 \leq i < j \leq 6} \!\!\!\! (d_i-d_j) .
\]
The reflection arrangement of type $\mathrm{E}_6$ consists of the
 $36 = \binom{6}{3}+\binom{6}{2}+1$ hyperplanes defined by
 the linear forms in the products above.
We list the flats of this  arrangement  in Table~\ref{fig:e6flats}. 
The bold numbers indicate irreducible flats.
Each flat corresponds to a root subsystem, but not conversely.
Root subsystems that are not parabolic, such as
$\mathrm{A}_2^{\times 3}$, do not come from flats.

\begin{table}[h]
\small
\begin{tabular}{l|l|l|l|l}
\# & Codim & Size & Root subsystem & Equations of a representative flat \\
\hline
{\bf 1} & 1 & 36 & $\mathrm{A}_1$ & $d_1-d_2$\\
\hline
{\bf 2} & 2 & 120 & $\mathrm{A}_2$ & $d_1+d_3+d_6, d_2+d_4+d_5$\\
3 & 2 & 270 & $\mathrm{A}_1 \times \mathrm{A}_1$ & $d_1+d_4+d_6, d_2+d_4+d_5$\\
\hline
{\bf 4} & 3 & 270 & $\mathrm{A}_3$ & $d_1+d_4+d_6, d_2+d_4+d_5, d_5-d_6$\\
5 & 3 & 720 & $\mathrm{A}_2 \times \mathrm{A}_1$ & $d_1+d_5+d_6, d_2+d_4+d_5, d_4-d_5$\\
6 & 3 & 540 & $\mathrm{A}_1^{\times 3}$ & $d_1+d_4+d_6, d_2+d_3+d_6, d_2+d_4+d_5$\\
\hline
{\bf 7} & 4 & 45 & $\mathrm{D}_4$ & $d_5-d_6, d_3-d_4, d_2+d_4+d_6, d_1+d_4+d_6$\\
{\bf 8} & 4 & 216 & $\mathrm{A}_4$ & $d_5-d_6, d_3+d_4+d_6, d_2+d_4+d_6, d_1+d_4+d_6$\\
9 & 4 & 540 & $\mathrm{A}_3 \times \mathrm{A}_1$ & $d_3+d_4+d_6, d_2+d_3+d_6, d_1+d_4+d_6, d_2+d_4+d_5$\\
10 & 4 & 120 & $\mathrm{A}_2 \times \mathrm{A}_2$ & $d_4-d_5, d_3+d_4+d_5, d_2+d_4+d_5, d_1+d_5+d_6$\\
11 & 4 & 1080 & $\mathrm{A}_2 \times \mathrm{A}_1^{\times 2}$ & $d_1+d_2+d_5,d_2+d_3+d_6,d_1+d_4+d_6,d_2+d_4+d_5$\\
\hline
{\bf 12} & 5 & 27 & $\mathrm{D}_5$ & $d_5-d_6, d_1-d_4, d_1-d_3, d_1-d_2, d_1+d_5+d_6$\\
{\bf 13} & 5 & 36 & $\mathrm{A}_5$ & $d_5+d_4+d_6, d_4-d_6, d_3-d_5, d_2-d_6, d_1-d_5$\\
14 & 5 & 216 & $\mathrm{A}_4 \times \mathrm{A}_1$ & $d_6, d_4, d_3-d_5, d_2+d_5, d_1$\\
15 & 5 & 360 & $\mathrm{A}_2^{\times 2} \times \mathrm{A}_1$ & $d_2+d_4+d_5,d_2-d_3,d_4-d_5,d_2+d_3+d_6,d_1+d_4+d_6$
\end{tabular}
\caption{The flats of the $\mathrm{E}_6$ reflection arrangement.}
\label{fig:e6flats}
\end{table}
The Bergman fan of ${\rm E}_6$ is the fan over the nested set complex \cite{ARW}, a $4$-dimensional simplicial complex whose vertices are the
$750 = 36\!+\!120\!+\!270\!+\!45\!+\!216\!+\!27\!+\!36$ irreducible~flats.

We define the {\em Yoshida variety} $\,\mathcal{Y}\,$ to be the closure of the image of the rational map 
\begin{equation}
\label{eq:yoshidamap}
\PP^5 \,\, \buildrel{{\rm linear}}\over{\hookrightarrow}\,\, \PP^{35} \,\,
\buildrel{{\rm monomial}} \over{\dashrightarrow} \,\, \PP^{39},
\end{equation}
where the monomial map is defined by the root subsystems of type $\mathrm{A}_2^{\times 3}$. 
Our name for $\mathcal{Y}$ gives credit to Masaaki Yoshida's explicit computations in \cite{yoshida}. (Warning: there is a closely related variety $\mathcal{Y}$ studied in \cite[\S 3.5]{hunt}. This is not the same as our variety.)
Explicitly, as shown in \cite[Proposition 2.4]{CGL}, the map into $\PP^{39}$ is defined by $30$ bracket monomials like
$[125][126][134][234][356][456]$ and $10$ bracket monomials like
$[{\rm conic}][123][456]$. We divide each of these $40$ expressions by
$\prod_{1 \leq i < j \leq 6} (d_i-d_j)$ to get a product of $9$ linear forms.
Thus the rational map (\ref{eq:yoshidamap}) is 
given  by $40$ polynomials of degree $9$ that factor into roots of ${\rm E}_6$.
The {\em tropical Yoshida variety} ${\rm trop}(\mathcal{Y})$ is the image
of the Bergman fan of ${\rm E}_6$ under the linear map $\mathbb{TP}^{35} \rightarrow \mathbb{TP}^{39}$ defined by the corresponding $40 \times 36$-matrix.
The Yoshida variety $\mathcal{Y}$ has  $40$ singular points \cite[Theorem 5.7]{vG}.
Its open part $\mathcal{Y}^{\circ}$ is the moduli space of marked smooth cubic surfaces \cite[Theorem 3.1]{CGL}.
The blow-up of these points is {\em Naruki's cross ratio variety} $Y^6_{\rm lc}$ (following the notation of \cite{HKT}) from \cite{naruki}.
The situation is analogous to Theorem~\ref{thm:IgusaBBS}.

As defined, we consider ${\rm trop}(\mathcal{Y})$ only as a set, but there is a unique coarsest fan structure on this set.
This was shown in  \cite{HKT}. It is the fan over a $3$-dimensional simplicial complex that was described 
by Naruki \cite{naruki}. We call them the {\em Naruki fan} and {\em Naruki complex}, respectively.
The $76 = 36+40$ vertices correspond to the two types
of boundary divisors on $Y^6_{\rm lc}$: the $36$ divisors coming from the hyperplanes of $\mathrm{E}_6$ (type a) and the 
$40$ exceptional divisors of the blow-up (type b). The types of intersections of these divisors is given in \cite[p.23]{naruki} and is listed in Table~\ref{table:naruki}. The divisors of type (a) correspond to root subsystems of type $\mathrm{A}_1$ and the divisors of type (b) correspond to root subsystems of type $\mathrm{A}_2^{\times 3}$. The Naruki complex is the nested set complex on these subsystems. Its face numbers are as follows:

\begin{table}[h]
\centering
\begin{tabular}{l|l}
type &  number\\
\hline
(a) & 36 \\
(b) & 40 \\
\hline
(aa) & 270 \\
(ab) & 360 \\
\hline
(aaa) & 540 \\
(aab) & 1080 \\
\hline
(aaaa) & 135 \\
(aaab) & 1080
\end{tabular}
\caption{The Naruki complex has $76$ vertices, $630$ edges, $1620$ triangles and $1215$ tetrahedra.}
\label{table:naruki}
\end{table}

\begin{theorem} \label{thm:yoshida}
The Yoshida variety $\mathcal{Y}$ is the intersection in $\PP^{39}$
of a $9$-dimensional linear space and a $15$-dimensional toric variety whose
dense torus $\mathbb{G}_m^{15}$ is the intrinsic torus of  $\mathcal{Y}^{\circ}$.
The tropical compactification $\overline{\mathcal{Y}}$ of $\mathcal{Y}^{\circ}$ induced by the Naruki fan is
the cross ratio variety~$Y^6_{\rm lc}$. 
\end{theorem}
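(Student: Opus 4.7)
The plan is to imitate the strategy used for the Burkhardt quartic in Proposition~\ref{prop:toricvarT}, Lemma~\ref{lem:intrinsictorusimage} and Theorem~\ref{thm:IgusaBBS}. Encode the monomial map in \eqref{eq:yoshidamap} as a $40 \times 36$ integer matrix $A$: columns are indexed by the $36$ hyperplanes of the $\mathrm{E}_6$ arrangement, rows by the $40$ bracket monomials listed after \eqref{eq:yoshidamap}, and each row has weight $9$. A direct {\tt 4ti2} computation, exactly as for Proposition~\ref{prop:toricvarT}, should give $\mathrm{rank}(A) = 16$, whence the projective toric variety $\mathcal{T}\subset \PP^{39}$ associated to $A$ is $15$-dimensional. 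Pulling back the coordinates $m_0,\dots,m_{39}$ along the linear embedding $\PP^5 \hookrightarrow \PP^{35}$ yields $40$ degree-$9$ polynomials on $\PP^5$, and an easy linear-algebra calculation shows that they span a $10$-dimensional vector space; hence $\mathcal{Y}$ sits inside a unique $9$-dimensional linear subspace $L^9 \subset \PP^{39}$, cut out by explicit linear trinomials and binomials among the $m_i$ arising from bracket monomials sharing common factors, in the spirit of \eqref{eq:linrel}.

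The containment $\mathcal{Y} \subset L^9 \cap \mathcal{T}$ is tautological. To upgrade it to equality, I would argue from irreducibility and dimension: $\mathcal{Y}$ is irreducible of dimension $5$ because it is dominated by $\PP^5$, while the {\tt 4ti2} output for the toric ideal of $\mathcal{T}$, combined with the linear forms defining $L^9$, should on modding out by an irrelevant ideal recover the ideal of $\mathcal{Y}$, just as in Proposition~\ref{prop:toricvarT}(c). This gives the first sentence of the theorem. The intrinsic torus assertion then follows from Lemma~\ref{lem:intrinsictorusimage}: by \cite[\S 4]{Tev}, the complement $U$ of the $\mathrm{E}_6$ arrangement in $\PP^5$ is already embedded in its intrinsic torus $\mathbb{G}_m^{35}$, and $\mathcal{Y}^\circ$ is the image of $U$ under the monomial map whose target torus is $\mathbb{G}_m^{15}\subset\mathcal{T}$.

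For the tropical-compactification claim, by \cite[Theorem 3.1]{DFS} we have $\mathrm{trop}(\mathcal{Y}) = A \cdot \mathrm{Berg}(\mathrm{E}_6) \subset \mathbb{TP}^{39}$, and the Bergman fan of $\mathrm{E}_6$ is the fan over the nested-set complex on the $750$ irreducible flats in Table~\ref{fig:e6flats}, which can be computed with {\tt TropLi}. A case-by-case analysis on Weyl-group orbits of flats, parallel to Table~\ref{fig:tropburkorbits}, should show that the linear folding by $A$ identifies the $750$ Bergman rays precisely into the $76 = 36 + 40$ vertices of the Naruki complex of Table~\ref{table:naruki}, and that the higher-dimensional cone counts match; in particular, the $40$ type-(b) rays must be exhibited as the images of the Bergman rays indexed by the various irreducible flats contained in one of the $40$ root subsystems of type $\mathrm{A}_2^{\times 3}$ inside $\mathrm{E}_6$. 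Once this folding is verified, Proposition~\ref{prop:CP} (in its expected $\mathrm{E}_6$-analog) shows that the Bergman complex is the nested-set complex, so the tropical compactification of $U$ is the de~Concini--Procesi wonderful compactification $\widetilde{\PP^5}$, which by Naruki's construction \cite{naruki} (see also \cite[Theorem 1.11]{HKT}) is $Y^6_{\rm lc}$. Combined with the intrinsic-torus identification, this yields $\overline{\mathcal{Y}} = Y^6_{\rm lc}$.

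The hard part is the last step's combinatorics: one must check that the folding of $\mathrm{Berg}(\mathrm{E}_6)$ by $A$ is neither finer nor coarser than Naruki's fan, and that the resulting cones match the Naruki stratification of $Y^6_{\rm lc}$ ray-by-ray. Correctly matching the $40$ type-(b) boundary divisors to cones in the Bergman fan is the delicate point, since these correspond to $\mathrm{A}_2^{\times 3}$-subsystems which are \emph{not} themselves flats of $\mathrm{E}_6$; this is exactly the place where the specific choice of monomial map \eqref{eq:yoshidamap} (i.e., the choice of cuspidal cubic in the moduli problem) becomes essential.
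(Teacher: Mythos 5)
Your first paragraph is essentially the paper's route: the intersection statement is a {\tt 4ti2}/{\tt gfan}/{\tt polymake} computation mirroring Proposition~\ref{prop:toricvarT}, and the intrinsic-torus claim is immediate from Lemma~\ref{lem:intrinsictorusimage}. One factual slip there: the $30$ linear forms cutting out the $\PP^9$ are \emph{not} trinomials or binomials coming from shared factors as in \eqref{eq:linrel}; they are $4$-term Pl\"ucker relations such as $[123][456]-[124][356]+[125][346]+[126][345]$, and in fact there are no linear trinomial relations on $\mathcal{Y}$ at all. This is a genuine structural difference from the Burkhardt case, not a cosmetic one.

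The last step, however, contains two real gaps. First, the image of ${\rm Berg}(\mathrm{E}_6)$ under $A$ is \emph{not} the Naruki fan: the rays do not fold down to $76=36+40$ vertices. The families $F_1,F_8,F_{13}$ of Table~\ref{fig:e6flats} collapse to the $36$ rays of type (a) and the $120$ flats in $F_2$ collapse $3$-to-$1$ onto the $40$ rays of type (b), but the $270$ rays $m(F_4)$ survive as additional rays (each the sum of two type-(a) rays spanning a cone), so the image is a fan with $346$ rays that strictly \emph{refines} the Naruki fan. Your proposed verification that the folding is ``neither finer nor coarser'' than Naruki's fan would therefore fail. Second, and more seriously, the wonderful compactification $\widetilde{\PP^5}$ of the $\mathrm{E}_6$ arrangement complement is $5$-dimensional (it compactifies the space of six points \emph{plus} a cuspidal cubic), whereas $Y^6_{\rm lc}$ and $\mathcal{Y}^{\circ}$ are $4$-dimensional; the two cannot be equal, and the descent along the $1$-dimensional fibers of $\PP^5\dashrightarrow\mathcal{Y}$ is precisely the nontrivial content. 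The paper does not attempt the direct combinatorial argument you sketch: it identifies $\mathcal{Y}^{\circ}$ with the moduli space of marked smooth cubic surfaces via \cite[Theorem 3.1]{CGL}, notes that by Lemma~\ref{lem:intrinsictorusimage} the embedding is into the intrinsic torus (so the tropical compactification is canonical), and then invokes the theorem of \cite{HKT} that the log canonical compactification $Y^6_{\rm lc}$ of this moduli space is its tropical compactification with the Naruki fan being the coarsest fan structure. If you want a self-contained argument avoiding \cite{HKT}, you would need something in the style of Lemma~\ref{lem:partialtrop-M23} and Theorem~\ref{thm:IgusaBBS} (toroidality, the universal property of blow-ups, and Zariski's Main Theorem) to pass from $\widetilde{\PP^5}$ down to $\overline{\mathcal{Y}}$; the combinatorial matching of cone counts alone does not suffice.
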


The polytope of the toric variety has $2232$ facets.
Its prime ideal is minimally generated by $8922$ binomials, namely
$120$ of degree $3$, 
$810$ of degree $4$, 
$2592$  of degree $5$, 
$2160$ of degree $6$,  and
$3240$ of degree $8$.
These results, which mirror 
parts (b) and (d) in Proposition \ref{prop:toricvarT},
were found using 
{\tt polymake} \cite{GJ} and {\tt gfan} \cite{jensen}.
The prime ideal of $\mathcal{Y}$ is minimally generated by
$30$ of the binomial cubics together with $30$ linear forms.
A natural choice of such linear forms is
described in \cite[\S 3]{yoshida}. It 
comes from $4$-term Pl\"ucker relations such as
$\,[123][456] - [124][356] + [125][346] + [126][345]$.
There are no linear trinomial relations on $\mathcal{Y}$.

The $750$ rays of the Bergman fan map into ${\rm trop}(\mathcal{Y})$ as follows.
Write $m$ for 
the linear map $ \mathbb{TP}^{35} \rightarrow \mathbb{TP}^{39}$
and $F_i$ for the rays representing  family $i$ of irreducible flats of
Table~\ref{fig:e6flats}.~Then:
\begin{align*}
m(F_1) = m(F_8) = m(F_{13})& \text{ has $36$ elements (a),}\\*
m(F_2)& \text{ has $40$ elements (b),}\\*
m(F_4)& \text{ has $270$ elements.}
\end{align*}
All other rays map to $0$ in $ \mathbb{TP}^{39}$.
Each element in $m(F_4)$ is  the sum of two vectors from $m(F_1)$ which form a cone.  
The image of the Bergman fan of $\mathrm{E}_6$ in $\mathbb{TP}^{39}$
is a fan with $346=36{+}40{+}270$ rays that subdivides the Naruki fan.
That fan structure on ${\rm trop}(\mathcal{Y})$ defines a  modification of
 the Naruki variety~$Y^6_{\rm lc}$.

Here is the finite geometry behind (\ref{eq:yoshidamap}). Let $V = \mathbb{F}_2^6$ with coordinates $x_1, \dots, x_6$. There are two conjugacy classes of nondegenerate quadratic forms on $V$. Fix the non-split form
\[
q(x)\,\, =\,\, x_1x_2 + x_3x_4 + x_5^2 + x_5x_6 + x_6^2.
\]
Then the Weyl group $W(\mathrm{E}_6)$ is the subgroup of $\mathrm{GL}_6(\mathbb{F}_2)$ that preserves this form. Using $q(x)$, we define
an orthogonal (in characteristic $2$, 
this also means symplectic) form by
\[
\langle x, y \rangle \,\,=\,\, q(x+y) - q(x) - q(y).
\]
There is a natural bijection between the 36 positive roots of $\mathrm{E}_6$ and the vectors $x \in V$ with $q(x) \ne 0$. There are $120$ planes $W$ such that $q(x) \ne 0$ for all nonzero $x \in W$. These correspond to subsystems of type $\mathrm{A}_2$. The set of $120$ planes breaks up into $40$ triples of pairwise orthogonal planes. These $40$ triples correspond to the subsystems of type $\mathrm{A}_2^{\times 3}$.

\smallskip

We now come to the case $n=7$. The {\em G\"opel variety} $\mathcal{G}$ of
 \cite{RSSS} is the closed image of a map
 \begin{equation}
\label{eq:gopelmap}
\PP^6 \,\, \buildrel{{\rm linear}}\over{\hookrightarrow}\,\, \PP^{62} \,\,
\buildrel{{\rm monomial}} \over{\dashrightarrow} \,\, \PP^{134}.
\end{equation}
The linear map is given by the $63$ hyperplanes in
the reflection arrangement $\mathrm{E}_7$, and the 
monomial map by the $135$ root subsystems of type $\mathrm{A}_1^{\times 7}$.
The full list of all flats of the arrangement $\mathrm{E}_7$ appears in \cite[Table 2]{RSSS}.
In \cite[Corollary 9.2]{RSSS} we argued that the
{\em tropical G\"opel variety} ${\rm trop}(\mathcal{G})$ is the image of
the Bergman fan of $\mathrm{E}_7$ under the induced linear map
$\mathbb{TP}^{62} \rightarrow \mathbb{TP}^{134}$, 
and we asked how ${\rm trop}(\mathcal{G})$ would be related to the fan for $Y^7_{\rm lc}$ in~\cite[\S 1.14]{HKT}.
We call that fan the {\em Sekiguchi fan}, after \cite{sekiguchi}.
The following theorem answers our question.

\begin{theorem} \label{thm:goepel}
The G\"opel variety $\mathcal{G}$ is the intersection in $\PP^{134}$ of a $14$-dimensional linear space and a $35$-dimensional toric variety whose
dense torus $\mathbb{G}_m^{35}$ is the intrinsic torus of $\mathcal{G}^{\circ}$.
The tropical compactification $\overline{\mathcal{G}}$ 
of  the open G\"opel variety $\mathcal{G}^{\circ}$ induced by the Sekiguchi fan is the Sekiguchi variety $Y^7_{\rm lc}$. 
Hence, the Sekiguchi fan is the coarsest fan structure on ${\rm trop}(\mathcal{G})$.
\end{theorem}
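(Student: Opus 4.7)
The plan is to follow the template established for the Burkhardt quartic in Theorem~\ref{thm:IgusaBBS} and for the Yoshida variety in Theorem~\ref{thm:yoshida}, now applied to the reflection arrangement $\mathrm{E}_7$. Write $A$ for the $135 \times 63$ incidence matrix between the positive roots of $\mathrm{E}_7$ and the $\mathrm{A}_1^{\times 7}$ root subsystems; the closed image of the monomial map $\PP^{62} \dashrightarrow \PP^{134}$ is the projective toric variety $\mathcal{T}_{\mathrm{E}_7}$ encoded by $A$. A {\tt 4ti2} and {\tt polymake} computation parallel to Proposition~\ref{prop:toricvarT} would establish that ${\rm rank}(A)=35$ and produce generators of the toric ideal; this gives $\dim \mathcal{T}_{\mathrm{E}_7}=35$. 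The G\"opel variety $\mathcal{G}$ is then cut out of $\mathcal{T}_{\mathrm{E}_7}$ by the $4$-term Pl\"ucker-type linear relations of \cite[\S3]{yoshida}, whose common zero locus is the desired $14$-dimensional linear subspace $L \subset \PP^{134}$, yielding $\mathcal{G} = \mathcal{T}_{\mathrm{E}_7} \cap L$. The intrinsic-torus claim then follows from Lemma~\ref{lem:intrinsictorusimage}: the $\mathrm{E}_7$ hyperplane complement $\PP^6 \cap \mathbb{G}_m^{62}$ is embedded in its intrinsic torus by \cite[\S4]{Tev}, so its image sits in the intrinsic torus $\mathbb{G}_m^{35}$ of $\mathcal{T}_{\mathrm{E}_7}$.

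For the identification $\overline{\mathcal{G}} = Y^7_{\rm lc}$, I would mirror Proposition~\ref{prop:tropB-SNC} and Theorem~\ref{thm:IgusaBBS}. First, one representative of each $W(\mathrm{E}_7)$-orbit of cones of the Sekiguchi fan is singled out; for each weight vector $v$ one computes the initial ideal ${\rm in}_v(I_{\mathcal{G}})$ and verifies that the corresponding initial variety is smooth and connected in the appropriate quotient torus. Combined with a direct lattice check that each maximal cone of the Sekiguchi fan is spanned by part of a basis of the character lattice of $\mathbb{G}_m^{35}$ (so that the ambient toric variety is smooth), this would show that $\overline{\mathcal{G}}$ is sch\"on with simple normal-crossing boundary. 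Next, the Hacking--Keel--Tevelev compactification $Y^7_{\rm lc}$ of \cite[\S1.14]{HKT} is itself a toroidal compactification of $\mathcal{G}^{\circ}$ whose boundary stratification matches the nested-set structure on the $\mathrm{A}_1$ and $\mathrm{A}_1^{\times 7}$ flats \cite{sekiguchi}; the universal property of toroidal compactifications provides a birational, stratum-preserving morphism $\overline{\mathcal{G}} \to Y^7_{\rm lc}$, which is then promoted to an isomorphism by Zariski's Main Theorem \cite[\S III.9]{mumford}. The final sentence of the theorem follows automatically, since for a sch\"on very affine variety the rays of the coarsest fan structure on the tropicalization are in bijection with the irreducible boundary divisors of the intrinsic tropical compactification, which here are exactly the Sekiguchi rays.

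The step I expect to be the main obstacle is the combinatorial matching of the image fan $A \cdot {\rm Berg}(\mathrm{E}_7)$ with the Sekiguchi fan. For $\mathrm{E}_6$ one sees as above that only three families of irreducible flats contribute nonzero rays and that families $F_1, F_8, F_{13}$ are identified under $m$; for $\mathrm{E}_7$ the analogous bookkeeping uses the much larger flat classification in \cite[Table~2]{RSSS}. One must determine precisely which Bergman rays are sent to zero, which are identified with each other, and which survive to give the rays of the Sekiguchi complex, before the face numbers tabulated in \cite{HKT} can be recovered. Although this reduces in principle to a finite check via {\tt TropLi} and {\tt gfan} as in Section~\ref{sec:tropicalization}, the substantially larger size of the $\mathrm{E}_7$ flat lattice compared with $\mathrm{E}_6$ makes this the most delicate part of the argument.
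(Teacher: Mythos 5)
Your handling of the first sentence and of the intrinsic torus matches the paper in substance: the paper simply cites \cite[Theorem 6.2]{RSSS} for the presentation of $\mathcal{G}$ as the intersection of the $14$-dimensional linear space with the $35$-dimensional toric variety (rather than recomputing it with {\tt 4ti2}), and it invokes Lemma~\ref{lem:intrinsictorusimage} exactly as you do. Two small slips there: the incidence matrix of the monomial map must have rank $36$, not $35$, since the projective toric variety has dimension one less than the rank (compare ${\rm rank}(A)=25$ versus $\dim\mathcal{T}=24$ in Proposition~\ref{prop:toricvarT}); and the linear relations cutting out the $14$-plane in $\PP^{134}$ come from \cite{RSSS}, not from the $\mathrm{E}_6$ discussion in \cite{yoshida}.

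For the main assertion you have taken a genuinely different, and much harder, road than the paper, and it has a gap. The paper's entire argument for the last two sentences is: $\mathcal{G}^{\circ}$ is the moduli space of marked smooth del Pezzo surfaces of degree two by \cite[Theorem 3.1]{CGL}, and for that moduli space the statement that the tropical compactification in the intrinsic torus with respect to the fan of \cite[\S 1.14]{HKT} equals $Y^7_{\rm lc}$ \emph{is} the theorem of Hacking--Keel--Tevelev; nothing further needs to be proved. You instead propose to reprove that theorem by mirroring Proposition~\ref{prop:tropB-SNC} and Theorem~\ref{thm:IgusaBBS}, and the step that fails is the construction of the morphism $\overline{\mathcal{G}} \to Y^7_{\rm lc}$. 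There is no ``universal property of toroidal compactifications'' that yields a map between two toroidal compactifications merely because their boundary stratifications have matching combinatorics. In the Burkhardt case the analogous map was extracted from the minimality of the Baily--Borel--Satake compactification \cite[Proposition III.15.4(3)]{borelji} followed by the universal property of blow-ups; you identify no analogue of the Satake compactification in the del Pezzo setting, so schönness of $\overline{\mathcal{G}}$ plus a combinatorial match of strata does not produce the isomorphism. The closing claim about the coarsest fan structure, whose existence on ${\rm trop}(\mathcal{G})$ is itself not automatic, is likewise left hanging; in the paper it too is part of what is imported from \cite{HKT}. The repair is simply to argue as the paper does: identify $\mathcal{G}^{\circ}$ with the moduli space via \cite{CGL} and quote \cite{HKT}.
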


The result about  the linear space and the toric variety is \cite[Theorem 6.2]{RSSS}.
The determination of the intrinsic tori in Theorems \ref{thm:yoshida} and \ref{thm:goepel} is immediate from Lemma \ref{lem:intrinsictorusimage}. The last assertion follows from 
the fact that the open G\"opel variety $\mathcal{G}^{\circ}$ is the moduli space of marked smooth del Pezzo surfaces of degree two.
For this see \cite[Theorem 3.1]{CGL}.

The Bergman fan of type $\mathrm{E}_7$ has $6091$ rays. They are listed in \cite[Table 2]{RSSS}.
The $6091$ rays  map into $\mathrm{trop}(\mathcal{G})$ as follows.
Write $F_i$ for family $i$ in \cite[Table 2]{RSSS}. Then:
\begin{align*}
m(F_1) = m(F_{17}) = m(F_{25}) \text{ has $63$ elements,}\\*
m(F_2) = m(F_{15}) \text{ has $336$ elements,}\\*
m(F_4) \text{ has $630$ elements,}\\*
m(F_{24}) \text{ has $36$ elements,}\\*
m(F_8) \text{ has $2016$ elements,}\\*
m(F_9) \text{ has $315$ elements,}\\*
m(F_{16}) \text{ has $1008$ elements.}
\end{align*}
Finally, $m$ sends $F_{26}$ to 0 (multiple of all $1$'s vector). The fan on the first $4$ types of rays is the Sekiguchi fan as described in \cite[\S 1.14]{HKT}. The image of the Bergman fan of ${\rm E}_7$ is a refinement of the Sekiguchi fan, as follows:
\begin{compactitem}
\item Every ray in $m(F_8)$ is uniquely the sum of a ray in $m(F_2)$ and a ray in $m(F_{24})$. This is in the image of a cone of nested set type $\mathrm{A}_2 \subset \mathrm{A}_6$.
\item Every ray in $m(F_9)$ is uniquely the sum of three rays in $m(F_1)$. This is in the image of a cone of nested set type $\mathrm{A}_1^{\times 3}$.
\item Every ray in $m(F_{16})$ can be written uniquely as a positive sum of a ray in $m(F_1)$ and a ray in $m(F_{24})$. This is in the image of a cone of nested set type $\mathrm{A}_1 \subset \mathrm{A}_6$.
\end{compactitem}

\smallskip

The Sekiguchi fan on ${\rm trop}(\mathcal{G})$ is a fan over a $5$-dimensional simplicial complex with $1065 = 63+336+630+36$ vertices. 
It has $9$ types of facets, corresponding to the $9$ tubings
shown in \cite[Figure 2, page 200]{HKT}.
The significance of the Naruki fan and the Sekiguchi fan lies in the commutative diagram in \cite[Lemma 5.4]{HKT}, which we restate here:
\begin{equation}
\begin{diagram}
\mathbb{P}^6 & \rDashto  & \mathcal{G}^{\circ} \\
\dDashto                       &        & \dTo \\
\mathbb{P}^5    & \rDashto & \mathcal{Y}^{\circ}
\end{diagram}
\end{equation}
The horizontal maps are those in (\ref{eq:gopelmap}) and (\ref{eq:yoshidamap}). The left vertical map is defined by dropping a coordinate. The tropicalization of the right vertical map $\mathcal{G}^{\circ} \rightarrow \mathcal{Y}^{\circ}$ is a linear projection
\begin{equation}
\label{eq:GtoT}
{\rm trop}(\mathcal{G}) \,\,\rightarrow \,\,{\rm trop}(\mathcal{Y})
\end{equation}
from the tropical G\"opel variety onto the tropical Yoshida variety.

 We wish to explicitly determine this map on each cone of  ${\rm trop}(\mathcal{G})$.
 The point is that all tropicalized generic del Pezzo surfaces of degree $3$ appear in the fibers of (\ref{eq:GtoT}),
by the result about the universal family in \cite[Theorem 1.2]{HKT}, 
and our Theorems \ref{thm:yoshida} and \ref{thm:goepel}. At infinity, such a 
del Pezzo surface
is glued from $27$ trees, which are exactly the tropical image of the $27$ lines on a cubic surface over $K$. Each tree has $10$ leaves, which come from the intersections of the $27$ lines. Thus, each tree represents a point of $\mathcal{M}_{0,10}(K)$.
Thus tropicalized del Pezzo surfaces of degree $3$ can be represented by
 a {\em tree arrangement} in the sense of \cite[\S 4]{HJJS}. 
 
 One issue with the map \eqref{eq:GtoT} is that its
 zero fiber is $3$-dimensional. Namely, it the union of tropicalizations of 
 all constant coefficient cubic surfaces. The zero fiber has $27$ rays, one for each line on the cubic surface, and $45$ triangular cones, one for each triple of pairwise intersecting lines. This is the subtle issue of 
 {\em Eckhart points}, addressed by \cite[Theorem 1.19]{HKT}. Cubic surfaces with Eckhart points are special, for they contribute to the points in the interior of the $45$ triangular cones. Disallowing these removes
the interiors of the triangular cones, and we are left with a balanced two-dimensional fan.
This is the fan over a graph with $27$ vertices and $135$ edges,
representing generic constant coefficient cubic~surfaces.

\smallskip

In this section, we developed some tools for the classification
of tropical cubic surfaces, namely as fibers of \eqref{eq:GtoT},
but we did not actually carry out this classification.
That problem will be solved in a forthcoming paper by
  Qingchun Ren, Kristin Shaw and Bernd Sturmfels.

\noindent
\footnotesize {\bf Authors' address}:
Department of Mathematics, University of California, Berkeley, CA 94720, USA. \hfill
{\tt \{qingchun,svs,bernd\}@math.berkeley.edu}
\end{document}